\newtheorem{theorem}{Theorem}[section]
\newtheorem{lemma}[theorem]{Lemma}
\newtheorem{corollary}[theorem]{Corollary}
\newtheorem{proposition}[theorem]{Proposition}
\theoremstyle{definition}
\newtheorem{definition}[theorem]{Definition}
\newtheorem{remark}[theorem]{Remark}
\newcommand{\hatt}[1]{{\skew{4} \hat{\rule{0ex}{1.2ex}\smash{#1}}}}
\newcommand{\Fin}{\Set_{\mr f}}
\newcommand{\Hey}{\mb{Hey}_{\mr f}}
\newcommand{\Finc}{\Cat_{\mr f}}
\newcommand{\Fino}{\mb{Cas}_{\mr f}}
\renewcommand{\opDisc}{\mb{opDisc}}
\newcommand{\Ind}{\mr{Ind}}
\newcommand{\HA}{\mb{HA}_{\mr{fp}}}
\newcommand{\HAf}{\mb{HA}_{\mr{f}}}
\newcommand{\HPfp}{\mb{HPt}_{\mr{fp}}}
\newcommand{\HPretopos}{\mb{HPretopos}_{\mr{fp}}}
\newcommand{\HP}{\mb{HPt}_{\mr f}}
\newcommand{\HPt}{\mb{HPt}}
\newcommand{\Pt}{\mb{Pt}_{\mr f}}
\newcommand{\Ptfp}{\mb{Pt}_{\mr{fp}}}
\newcommand{\Por}{\mb{Por}_{\mr{f}}}
\newcommand{\Posf}{\mb{Pos}_{\mr{f}}}
\newcommand{\Catf}{\hatt{\Finc}}
\newcommand{\Casf}{\hatt{\Fino}}
\newcommand{\Casfp}{\hatt{\mb{Cas}}_{\mr f,*}}
\renewcommand{\iso}{^{\!\!\cong}}
\title{Stack Representation of Finitely Presented Heyting Pretoposes I}
\author{Lingyuan Ye}
\date{\today}
\begin{document}
%

%
%\titlerunning{Abbreviated paper title}
% If the paper title is too long for the running head, you can set
% an abbreviated paper title here
%

%
% \authorrunning{L. Ye}
% First names are abbreviated in the running head.
% If there are more than two authors, 'et al.' is used.
%
% \institute{New College\\

%
\maketitle              % typeset the header of the contribution

\begin{abstract}
  This is the first of a series of papers on stack representation of finitely presented Heyting pretoposes. In this paper, we provide the first step by constructing a $(2,1)$-site, which can be thought of as the site of finite Kripke frames, such that the $(2,1)$-category of finitely presented Heyting pretoposes contravariantly embeds into the $(2,1)$-topos of stacks on this $(2,1)$-site. This provides an entry point to use categorical and higher sheaf-theoretic tools to study the properties of certain classes of intuitionistic first-order theories.
\end{abstract}

\tableofcontents

\section{Introduction}\label{sec:intro}

In this paper we are interested in stack representation of finitely presented Heyting pretoposes. We shall comment immediately that by finitely presentable, we mean those Heyting pretoposes which are \emph{complete} w.r.t. \emph{finite models} on \emph{finite Kripke frames}. Typical examples include those equivalent to a finite quotient of the free Heyting pretopos generated by a finite pretopos. A pretopos is \emph{finite} if its \emph{category of models} is essentially finite.

In particular, this notion does \emph{not} coincide with those Heyting pretoposes generated by a first-order intuitionisitic theory with finitely many non-logical symbols and finitely many axioms. In general, such first-order intuitionistic theories will not be complete w.r.t. the particular class of finite models we are interested in. Towards the end of this section, we will comment on the reason why we are interested in this restricted class of Heyting pretoposes.

Categorically, this work can be thought of as a \emph{categorification} of the sheaf representation of the category of finitely presented Heyting algebras described in the book~\cite{ghilardi2013sheaves}. We give a brief overview below.

The starting point of the sheaf representation of finitely presented Heyting algebras is the \emph{duality} of finite distributive lattices and finite Heyting algebras as follows,
\[
  \begin{tikzcd}
    \mb{HA}\op_{\mr f} \ar[d, tail] \ar[r, "\simeq"] & \Por \ar[d, tail] \\
    \DL\op_{\mr f} \ar[r, "\simeq"'] & \Posf 
  \end{tikzcd}
\]
where $\DL_{\mr f}$, $\mb{HA}_{\mr f}$ are the categories of finite distributive lattices and finite Heyting algebras, respectively. Notice that $\mb{HA}_{\mr f}$ is a wide subcategory of $\DL_{\mr f}$, since any finite distributive lattice is automatically Heyting. Their corresponding dual categories are $\Posf$, the category of finite posets, and $\Por$, the wide subcategory of $\Posf$ with \emph{open} functions, or \emph{p-morphisms} as denoted by modal logicians.

Concretely, the duality between finite posets and finite distributive lattices takes any poset $L$ to the distributive lattice $D(L)$ of upward closed subsets of $L$, while $L$ will be isomorphic to the poset of \emph{models} of $D(L)$. Since any finite distributive lattice is also a Heyting algebra, for any finitely presented Heyting algebra $H$ we may look the poset of its $L$-valued models $\HA(H,D(L))$ for a finite poset $L$. In particular, a Heyting algebra morphism from $H$ to $D(L)$ is exactly the same as a Krpike-model on $L$ for the intuitionistic propositional theory encoded by $H$.

This gives us a \emph{presheaf} on $\Por$,
\[ \HA(H,D(-)) : \Por\op \to \Set, \]
which turns out to be a sheaf for a suitable Grothendieck topology $J$ on $\Por$. The topology $J$ on $\Por$ is in fact the \emph{canonical} one, with covering families being jointly surjective maps. This gives us a sheaf representation functor 
\[ \Phi_- : \HA\op \inj \sh(\Por,J), \]
which intuitively embeds any finitely presented Heyting algebra to the sheaf of its models on finite Kripke frames.

As is well-known, categorification is an art rather than a science. But if we are careful enough, there will still be tracible links. Recall the indexation for higher categories, e.g. see~\cite{nlab:n-category}. For $r,s\in\ov{\N} = \N \cup \set{\infty}$, an $(r,s)$-category is a higher category where $k$-cells are trivial for all $k>r$ and $j$-cells are inverible for all $j > s$. An $(n,n)$-category in this sense is the same as an $n$-category; an $(n,0)$-category is usually called an $n$-groupoid; while an $(n,n+1)$-category can be identified with an $n$-poset.

In this way, Heyting algebras can be thought of as $(0,1)$-Heyting pretoposes, and in full generality the collection of Heyting algebras should organise themselves into a $(1,2)$-category. However, the ordinary sheaf representation chooses a $1$-site $\Por$, which is just an ordinary category, and embeds the underlying $1$-category $\HA$ of finitely presented Heying algebras into the category of $1$-sheaves on this $1$-site. The underlying $1$-category $\HA$ actually suffices for the purpose of studying propositional intuitionistic logic. For instance, the interpolation property and Beth definability for propositional intuitionistic logic are concerned with \emph{pushout} squares in $\HA$, e.g. see~\cite{pitts1983heytingalg}, rather than lax or oplax colimits computed in the $(1,2)$-category of Heyting algebras.

Now the categorification we are working towards concerns increasing the first entry. We intend to study the collection of $(1,1)$-Heyting pretoposes, which are simply Heyting pretoposes in the usual 1-categorical sense. They organise themselves into a $2$-category, while we are concerned in providing a stack representation for its underlying $(2,1)$-category, i.e. the $2$-category of Heyting pretoposes with Heyting functors and natural \emph{isomorphisms} between them. We denote the $(2,1)$-category of finitely presented Heyting pretoposes as $\HPfp$, while the full $2$-category as $\HPretopos$. Similar convention applies to $\Ptfp$ and $\Pretopos_{\mr{fp}}$ for pretoposes, and their finitary versions. Again, for the purpose of studying the properties of first-order intuitionistic logic, $\HPfp$, rather than $\HPretopos$, is the right environment we want to work in. For instance, as shown in~\cite{pitts1983open}, the interpolation property of first-order intuitionistic logic is again concerned with $2$-pushouts in $\mb{HPt}$, rather than cocomma squares in the full $2$-category $\mb{HPretopos}$ of Heyting pretoposes.

Thus, our first task in Section~\ref{sec:finitepretopos} is to obtain a categorification of the duality result for \emph{finite pretoposes} and \emph{finite Heyting pretoposes}. And it turns out that, according to our definition, finite pretoposes are again automatically \emph{Heyting}, in fact even \emph{toposes}.\footnote{In this paper, we will follow the more traditional usage of the notion of topos to mean \emph{elementary topos}, unless otherwise stated.} And a similar duality result holds as indicated below,
\[
  \begin{tikzcd}
    \mb{HPretopos}_{\mr f}\op \ar[d, tail] \ar[r, "\simeq"] & \Casf \ar[d, tail] \\
    \Pretopos_{\mr f}\op \ar[r, "\simeq"'] & \Catf
  \end{tikzcd}
\]
Here $\Catf$ is the $2$-category of \emph{finite Cauchy complete categories}, and $\Casf$ is its wide subcategory with a suitable notion of \emph{open} functors. This gives us a duality for the full 2-categorical version, and for us latter we will restrict to their underlying $(2,1)$-categories to construct a $(2,1)$-site.

After obtaining the duality, in Section~\ref{sec:2-topos} we proceed to construct the stack representation for the $(2,1)$-category $\HPfp$. We similarly choose the underlying $(2,1)$-category to be $\Casf\iso$, the underlying $(2,1)$-category of $\Casf$ which is dual to $\HP$. By putting a suitable Grothendieck topology $J$ on $\Casf\iso$, we once more obtain a stack representation $2$-functor
\[ \Phi_- : \HPfp\op \inj \St(\Casf\iso,J). \]
In this paper, our use of the term stack follows the traditional convention, i.e. it indicates suitable \emph{groupoid} valued pseudo-functors. In other words, we have embedded the opposite $(2,1)$-category of $\HPfp$ into the $(2,1)$-category of $(2,1)$-sheaves on the $(2,1)$-site $(\Casf,J)$.

We will denote the above representation functor as the \emph{gros} representation functor. It also generates a \emph{petit} representation functor for a single finitely presented Heyting pretopos $\mc H$, via the embedding
\[ \mc H \inj \mc H/\HPfp\op, \]
sending each $\varphi\in\mc H$ to the pullback onto the slice category over it $\mc H \to \mc H/\varphi$. Composed with the gros representation, this gives us a representation of $\mc H$ as follows,
\[ \mc H \inj \St(\Casf\iso,J)/\Phi_{\mc H}. \]
Section~\Ref{subsec:grospetit} will be devoted to the study of this induced petit representation. In fact, there we will show that the image of $\mc H$ under the petit representation restricts to \emph{discrete opfibrations} over $\Phi_{\mc H}$ in the $(2,1)$-category $\St(\Casf\iso,J)$, and the induced functor
\[ \mc H \inj \opDisc(\Phi_{\mc H}) \]
will be a \emph{Heyting} functor. In this sense, the internal structure of the $(2,1)$-topos $\St(\Casf\iso,J)$ realises the Heyting structure of \emph{every} finitely presented Heyting pretoposes.

One of the reasons to develop the ordinary sheaf representation of Heyting algebras is that, it can be used to show that $\HA\op$ is itself a \emph{Heyting category}, by proving $\HA\op$ is closed under the Heyting operation in $\sh(\Por,J)$. As explained in~\cite{ghilardi2013sheaves}, this implies that propositional intuitionisitic logic has quantifier elimination for \emph{second-order} quantifiers which is originally proven in~\cite{pitts1992uniform}, and that the algebraic theory of Heyting algebras has a model companion.

Similarly, we expect our stack representation for finitely presented Heyting algebras to show that $\HPfp\op$ will also be a $(2,1)$-Heyting category. For the notion of higher Heyting categories, see~\cite{nlab:heyting_2-category}. In particular, we expect to show that $\HPfp\op$ via this stack representation is closed under the $(2,1)$-Heyting operations in $\St(\Casf\iso,J)$.

The first step towards such a result is to characterise the image of the stack representation of $\HPfp\op$ in $\St(\Casf\iso,J)$. In the sequal to this paper, we will characterise the image of the stack representation functor in $\St(\Casf\iso,J)$, by introducing a suitable notion of \emph{bisimulation} between models of finitely presented Heyting pretoposes. We will also show in the sequal that the topology $J$ on $\Casf\iso$ exactly encodes this logical information of bisimulation, and the notion of $J$-stacks is equivalent to a bisimulation-invariant class of models. Under this notion of bisimulation, we are also going to characterise the image of the stack representation functor as certain \emph{finitary} stacks.

Along with such semantic investigations, we also expect to extract similar quantifier elimination results of \emph{higher quantifiers} for this restricted class of first-order intuitionistic theories in future work. This also explains our motivation for looking at first-order intuitionistic theories with strong finite model properties: For a general first-order intuitionistic theory, if it contains infinite models, then the second-order extension will in general have \emph{stronger} expressive power than the first-order fragment. However, for the restricted class of Heyting pretoposes, such a result is at least not outright contradictory. And from our limited knowledge, there hasn't been an attempt for such a result in the literature.

\section{Duality for Finite Pretoposes}\label{sec:finitepretopos}

As mentioned in the introduction, our first objective is to study a suitable notion of finite pretoposes. For us, the definition that works is the following:

\begin{definition}
  A pretopos is \emph{finite} iff its category of models is essentially finite, i.e. equivalent to a finite category.
\end{definition}

It is then natural to ask whether all finite categories could be the category of models of some pretopos. The answer is negative due to the following simple observation:

\begin{lemma}\label{lem:modcaucomp}
  For any pretopos $\mc D$, its category of models $\Mod(\mc D)$ is closed under filtered colimits in $[\mc D,\Set]$, thus in particular is Cauchy complete.
\end{lemma}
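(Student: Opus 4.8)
The plan is to establish the two assertions separately: first that $\Mod(\mc D)$ is closed under filtered colimits computed pointwise in $[\mc D,\Set]$, and then to derive Cauchy completeness as a purely formal consequence. Recall that a model of $\mc D$ is a functor $\mc D\to\Set$ that preserves finite limits, finite coproducts and quotients of equivalence relations, and that $\Mod(\mc D)$ is the \emph{full} subcategory of $[\mc D,\Set]$ on these functors. Since filtered colimits in $[\mc D,\Set]$ exist and are computed pointwise, it suffices to show that a pointwise filtered colimit of models is again a model.

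So let $I$ be a filtered category, let $D\colon I\to\Mod(\mc D)$ be a diagram, and let $D_\infty=\operatorname{colim}_{i\in I}D_i$ be its pointwise colimit. Preservation of the colimit-type part of the pretopos structure --- finite coproducts and quotients of equivalence relations --- is immediate and uses nothing about filteredness: each $D_i$ preserves these colimits, colimits commute with colimits, and evaluation at an object of $\mc D$ commutes with $\operatorname{colim}_{i\in I}$. The substantive point is preservation of finite limits: for a finite diagram $d\colon J\to\mc D$,
\begin{align*}
  D_\infty(\lim\nolimits_J d) &= \operatorname{colim}_i D_i(\lim\nolimits_J d) = \operatorname{colim}_i \lim\nolimits_J (D_i\circ d) \\
  &= \lim\nolimits_J \operatorname{colim}_i (D_i\circ d) = \lim\nolimits_J (D_\infty\circ d),
\end{align*}
where the first and last equalities are the pointwise computation of $D_\infty$, the second is preservation of finite limits by each $D_i$, and the third --- the only place the filteredness hypothesis enters --- is the classical fact that filtered colimits commute with finite limits in $\Set$. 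Hence $D_\infty$ lies in $\Mod(\mc D)$, so $\Mod(\mc D)$ is closed under filtered colimits in $[\mc D,\Set]$ and in particular admits all small filtered colimits.

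For the second assertion I would invoke the standard fact that any category with (even merely sequential) filtered colimits is Cauchy complete. Concretely, given an idempotent $e\colon M\to M$ in $\Mod(\mc D)$, form the filtered colimit $M_\infty$ of the sequence $M\xrightarrow{e}M\xrightarrow{e}M\to\cdots$, which lies in $\Mod(\mc D)$ by the first part; writing $\kappa_0\colon M\to M_\infty$ for the first coprojection and $i\colon M_\infty\to M$ for the map induced by the cocone all of whose legs equal $e$, one checks routinely that $i\circ\kappa_0=e$ and $\kappa_0\circ i=\mathrm{id}_{M_\infty}$, so that $e$ splits. I do not anticipate a genuine obstacle: the whole content of the lemma is the interchange of filtered colimits with finite limits in $\Set$, and the only thing to watch is that whatever stability, disjointness and effectivity clauses the chosen definition of a model carries are themselves finite-limit-and-colimit conditions that $D_\infty$ already preserves and that hold in $\Set$, so the same computation covers them.
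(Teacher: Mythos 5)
Your proof is correct and follows essentially the same route as the paper, which simply cites the standard closure of $\Mod(\mc D)$ under filtered colimits (Ad\'amek--Rosick\'y) and then observes that idempotent splittings are themselves filtered colimits. The only cosmetic difference is that you realise the splitting of $e$ as the colimit of the $\omega$-chain $M\xrightarrow{e}M\xrightarrow{e}\cdots$ rather than as the colimit over the free idempotent (a filtered category with one object and two morphisms); both are standard, and your verification of the interchange of filtered colimits with the finite-limit and colimit clauses in the definition of a model is sound.
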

\begin{proof}
  The fact that $\Mod(\mc D)$ is closed under filtered colimits computed in $[\mc D,\Set]$ is well-known, e.g. see Chapter 5 of~\cite{adamek1994locally}. The splitting of idempotent is in particular a filtered colimit, thus $\Mod(\mc D)$ must be Cauchy complete.
\end{proof}

In fact, for finite categories, Cauchy completeness is the only obstacle, i.e. all Cauchy complete finite categories can arise as the category of models of a finite theory. Moreover, we obtain a duality result, where there is an equivalence
\[ \Catf\op \simeq \Pretopos_{\mr f}, \]
from the opposite 2-category of all Cauchy complete finite categories, to the 2-category of finite pretoposes. We prove this by proving the following duality
\[ \Pretopos_{\mr f}\op \simeq \Topos_{\mr f}, \]
where $\Topos_{\mr f}$ is the 2-category of $\Fin$-bounded toposes. The previous duality follows from this because from Corollary 2.2.23 in~\cite{johnstone2002sketches} we already know that the 2-category of finite Cauchy complete categories is equivalent to the 2-category of $\Fin$-bounded toposes. This latter result can be seen as a consequence of the following facts:
\begin{itemize}
\item There is a general 2-fully faithful embedding of Cauchy complete categories into toposes with \emph{essential} geometric morphism,
  \[ \hatt{\Cat} \hook \mb{EssTopos}. \]
\item In~\cite{Haigh1980}, it has shown that any geometric morphism between $\Fin$-bounded toposes is \emph{automatically essential}.
\item It can be shown that any $\Fin$-bounded topos is of the form $\Fin[\mc I]$ for some finite Cauchy complete category $\mc I$, by showing that any Grothendieck topology on finite Cauchy complete categories are so called \emph{rigid}, thus is of presheaf type. Again, see Lemma 2.2.21 in~\cite{johnstone2002sketches}.
\end{itemize}

Section~\Ref{subsec:fincat} will first provide an analysis of finite pretoposes, while Section~\Ref{subsec:dualityfinitepretopos} proceeds to prove the above mentioned duality result for finite pretoposes. By the equivalence $\Catf \simeq \Topos_{\mr f}$, we also study certain geometric aspects of finite Cauchy complete categories, notably that of geometric surjections, in Section~\Ref{subsec:surjection}, which turns out to be crucial for the Grothendieck topology we will be constructing in the next section. Finally in Section~\Ref{subsec:openheyting}, we restrict the duality for finite pretoposes to one for finite Heyting pretoposes, and study certain properties of open functors, again in preparation for the next section.

\subsection{Finite Categories as Models}\label{subsec:fincat}

To show every Cauchy complete finite category can arise as the category of models of some theory, the following observation is of vital importance:

\begin{lemma}\label{lem:fincatcaueqind}
  For any finite category $\mc I$, we have a canonical equivalence
  \[ \hatt{\mc I} \simeq \Ind(\mc I), \]
  where $\hatt{\mc I}$ is the \emph{Cauchy completion} of $\mc I$, and $\Ind(\mc I)$ is its \emph{ind-completion}.
\end{lemma}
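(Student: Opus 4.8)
The plan is to prove the equivalence as an identity of full subcategories of the presheaf category $[\mc I\op,\Set]$. Recall the standard descriptions: $\hatt{\mc I}$ is the closure of the representables under retracts, while $\Ind(\mc I)$ is their closure under filtered colimits. The inclusion $\hatt{\mc I}\subseteq\Ind(\mc I)$ is formal and holds for any small $\mc I$: the splitting of an idempotent $e$ on $y(c)$ is the colimit of the sequential diagram $y(c)\xrightarrow{e}y(c)\xrightarrow{e}\cdots$, which is filtered; equivalently, $\Ind(\mc I)$ is already idempotent complete and contains $\mc I$, so it contains $\hatt{\mc I}$ by the universal property of Cauchy completion. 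Everything therefore reduces to the reverse inclusion $\Ind(\mc I)\subseteq\hatt{\mc I}$.

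For this, the key point — and the only place the finiteness of $\mc I$ enters — is that every object of $\Ind(\mc I)$ is finitely presentable in $[\mc I\op,\Set]$. Write such an object as $F=\operatorname*{colim}_{j\in\mc J}y(Dj)$ for a filtered diagram $D\colon\mc J\to\mc I$. Colimits of presheaves are computed pointwise, so $F(d)\cong\operatorname*{colim}_{j\in\mc J}\mc I(d,Dj)$, a filtered colimit of finite sets each of cardinality at most $M_d:=\max_{e\in\mc I}|\mc I(d,e)|$, a bound independent of the diagram. Filteredness lets any finitely many elements of the colimit be represented at a common stage $j$, inside $\mc I(d,Dj)$; hence $|F(d)|\le M_d<\infty$. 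Consequently the category of elements $\int F$ is finite (both $\mr{ob}\,\mc I$ and each $F(d)$ are finite, and $\mc I$ has finite hom-sets), so the canonical co-Yoneda presentation exhibits $F$ as a \emph{finite} colimit of representables; since representables are finitely presentable and finite colimits of finitely presentable objects are finitely presentable, $F$ is finitely presentable.

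It then follows at once that $F\in\hatt{\mc I}$. Since $F$ is finitely presentable and $F=\operatorname*{colim}_{j}y(Dj)$ is a filtered colimit, the identity factors through one of the colimit legs: $\mr{id}_F=\iota_j\circ r$ for some stage $j$ and some $r\colon F\to y(Dj)$. Then $r\circ\iota_j$ is an idempotent on the representable $y(Dj)$ whose splitting is $F$, so $F$ is a retract of $y(Dj)$ and hence lies in $\hatt{\mc I}$. This proves $\Ind(\mc I)=\hatt{\mc I}$ inside $[\mc I\op,\Set]$; the identification respects the two canonical embeddings of $\mc I$, which is the asserted canonical equivalence (equivalently, it is the unique extension of $\mc I\hookrightarrow\Ind(\mc I)$ along the Cauchy completion, now seen to be essentially surjective).

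I do not anticipate a real difficulty: the argument is short once organised around finite presentability. The one step deserving attention is the uniform-finiteness estimate for $F(d)$ — one must use the bound $M_d$, which does not depend on the diagram, together with genuine filteredness of $\mc J$ (not merely cofinality of a chain) to collapse the colimit to cardinality at most $M_d$. It is worth noting that a more hands-on route — reducing a filtered colimit of representables to a cofinal subdiagram that is constant on objects and then analysing idempotents in the finite endomorphism monoid $\mr{End}_{\mc I}(c)$ — also works but is considerably more fiddly; passing through finite presentability in $[\mc I\op,\Set]$ avoids that bookkeeping entirely.
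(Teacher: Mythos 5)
Your proof is correct, but it takes a genuinely different --- and more self-contained --- route than the paper's, whose entire proof is the heuristic that the only non-trivial filtered colimits in a finite category are splittings of idempotents, plus a citation to the literature for the details. Concretely, you work inside $[\mc I\op,\Set]$ and show that every filtered colimit $F$ of representables is finitely presentable: the uniform bound $|F(d)|\le M_d=\max_{e}|\mc I(d,e)|$ is the crux, and your justification is sound (any $M_d+1$ classes in the colimit are simultaneously represented at a single stage, inside a hom-set of cardinality at most $M_d$, forcing a collision); the co-Yoneda presentation over the now-finite category of elements exhibits $F$ as a finite colimit of the finitely presentable representables; and then $1_F$ factors through a colimit leg, making $F$ a retract of a representable and hence an object of $\hatt{\mc I}$. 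The alternative you set aside --- passing to a cofinal eventually-constant subdiagram and analysing idempotents in the finite monoid $\mc I(c,c)$ --- is the one closer to the paper's stated intuition, and that same finiteness-of-orbits argument does reappear later in the paper, in Lemma~\ref{lem:fullcloseretr} and Proposition~\ref{prop:upwardesssurj}. Your version buys a short, complete proof that avoids that bookkeeping; the paper's buys only brevity by outsourcing the argument. The one step worth making explicit in your write-up is that the factorisation $1_F=\iota_j\circ r$ comes from applying finite presentability of $F$ to its own filtered colimit presentation, i.e.\ to the canonical isomorphism between maps out of $F$ into the colimit and the filtered colimit of the hom-sets into the stages; you clearly intend this and it is standard.
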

\begin{proof}
  This follows from the fact that for a finite category, essentially the only non-trivial filtered colimit is the splitting of idempotency, thus $\hatt{\mc I}$ already has all filtered colimits. For a proof, see~\cite{giacomo2022accessible}.
\end{proof}

In particular, Lemma~\Ref{lem:fincatcaueqind} implies that if $\mc I$ is already Cauchy complete, then $\mc I \simeq \Ind(\mc I)$. Lemma~\Ref{lem:fincatcaueqind} also allows us to compute the category of points of the copresheaf category on a finite category:

\begin{lemma}\label{lem:finmodelcat}
  For any finite category $\mc I$, there is an equivalence
  \[ \Topos(\Set,[\mc I,\Set]) \simeq \hatt{\mc I}. \]
\end{lemma}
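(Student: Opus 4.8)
The plan is to recognise $[\mc I,\Set]$ as a presheaf topos and then combine Diaconescu's theorem with Lemma~\ref{lem:fincatcaueqind}. Since presheaves on a category $\mc C$ are by definition functors $\mc C\op\to\Set$, the topos $[\mc I,\Set]$ is the topos of presheaves on $\mc I\op$. Diaconescu's theorem, in the special case of the trivial topology (see e.g.~\cite{johnstone2002sketches}), then says that for any topos $\mc F$ the hom-category $\Topos(\mc F,[\mc I,\Set])$ of geometric morphisms and geometric transformations is equivalent to the category of flat functors $\mc I\op\to\mc F$ with arbitrary natural transformations between them. Specialising to $\mc F=\Set$ yields
\[ \Topos(\Set,[\mc I,\Set]) \;\simeq\; \mathrm{Flat}(\mc I\op,\Set), \]
where $\mathrm{Flat}(\mc I\op,\Set)$ denotes the full subcategory of $[\mc I\op,\Set]$ on the flat functors.

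The next step is to identify $\mathrm{Flat}(\mc I\op,\Set)$ with $\Ind(\mc I)$. This is classical and uses nothing about $\mc I$ beyond smallness: a functor $\mc I\op\to\Set$ is flat iff its category of elements is filtered, iff it is a filtered colimit of representable presheaves $\mc I\op(i,-)\cong\mc I(-,i)$ on $\mc I$ (see e.g.~\cite{adamek1994locally}). The full subcategory of $[\mc I\op,\Set]$ spanned by these representables is a copy of $\mc I$ via the Yoneda embedding $i\mapsto\mc I(-,i)$, and closing it under filtered colimits inside $[\mc I\op,\Set]$ is exactly the standard presentation of the ind-completion. Hence $\mathrm{Flat}(\mc I\op,\Set)\simeq\Ind(\mc I)$, with all natural transformations as morphisms on both sides.

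Finally, Lemma~\ref{lem:fincatcaueqind} gives $\Ind(\mc I)\simeq\hatt{\mc I}$ when $\mc I$ is finite, and composing the three equivalences proves the claim. The proof is an assembly of standard results, so there is no serious obstacle; the only thing to be careful about is the bookkeeping of variances — making sure the Yoneda embedding points the right way, so that one lands on $\Ind(\mc I)$ rather than $\Ind(\mc I\op)$ — together with the (routine) verification, already built into the statement of Diaconescu's theorem, that geometric transformations correspond precisely to natural transformations of the associated flat functors. Note that finiteness of $\mc I$ enters only at the very last step, through Lemma~\ref{lem:fincatcaueqind}.
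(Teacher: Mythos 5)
Your proof is correct and follows exactly the same route as the paper: Diaconescu's theorem gives $\Topos(\Set,[\mc I,\Set])\simeq\mathrm{Flat}(\mc I\op,\Set)$, the classical identification of flat presheaves with the ind-completion gives $\mathrm{Flat}(\mc I\op,\Set)\simeq\Ind(\mc I)$, and Lemma~\ref{lem:fincatcaueqind} finishes. You simply spell out the variance bookkeeping that the paper leaves implicit.
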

\begin{proof}
  We have the following sequence of equivalences,
  \[ \Topos(\Set,[\mc I,\Set]) \simeq \mr{Flat}(\mc I\op,\Set) \simeq \Ind(\mc I) \simeq \hatt{\mc I}. \]
  The first equivalence holds by Diaconescu's theorem; the second equivalence is well-known; and the last equivalence is by Lemma~\Ref{lem:fincatcaueqind}.
\end{proof}

In particular, this gives us a very simple way of determining what the classifying pretopos should be for finite categories: Let us write $\Fin[\mc I]$ for the functor category $[\mc I,\Fin]$,

\begin{proposition}
  For any finite category $\mc I$, we have an equivalence
  \[ \Fin[\mc I] \simeq \Fin[\hatt{\mc I}], \]
  such that
  \[ \Mod(\Fin[\mc I]) \simeq \Mod(\Fin[\hatt{\mc I}]) \simeq \hatt{\mc I}. \]
\end{proposition}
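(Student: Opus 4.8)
The plan is to reduce everything to the two key lemmas already established: Lemma~\ref{lem:fincatcaueqind}, which gives $\hatt{\mc I} \simeq \Ind(\mc I)$, and Lemma~\ref{lem:finmodelcat}, which computes $\Mod(\Fin[\mc I]) \simeq \hatt{\mc I}$. Observe first that the last displayed chain of equivalences $\Mod(\Fin[\mc I]) \simeq \Mod(\Fin[\hatt{\mc I}]) \simeq \hatt{\mc I}$ is an immediate consequence of the equivalence $\Fin[\mc I] \simeq \Fin[\hatt{\mc I}]$ together with Lemma~\ref{lem:finmodelcat} applied to both $\mc I$ and $\hatt{\mc I}$ (for the latter one also uses that $\hatt{\hatt{\mc I}} \simeq \hatt{\mc I}$, since the Cauchy completion is idempotent, and $\hatt{\mc I}$ is again finite up to equivalence). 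So the real content is the single equivalence $\Fin[\mc I] \simeq \Fin[\hatt{\mc I}]$, and I would devote the proof to that.

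To prove $[\mc I,\Fin] \simeq [\hatt{\mc I},\Fin]$, I would use the universal property of Cauchy completion: the canonical inclusion $\iota : \mc I \hook \hatt{\mc I}$ is the free completion of $\mc I$ under splitting of idempotents, so for any category $\mc E$ that is itself Cauchy complete (i.e. idempotent-complete), precomposition
\[ \iota^* : \Fun(\hatt{\mc I}, \mc E) \xrightarrow{\ \simeq\ } \Fun(\mc I, \mc E) \]
is an equivalence. Now $\Fin$ is idempotent-complete — a retract of a finite set is a finite set — so taking $\mc E = \Fin$ gives exactly $[\hatt{\mc I},\Fin] \simeq [\mc I,\Fin]$, and one should note this equivalence is natural/pseudonatural in $\mc I$, which is what one wants for the $2$-categorical duality downstream. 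Alternatively, if one prefers a more hands-on argument: every $F : \mc I \to \Fin$ is flat-free, i.e. just an arbitrary functor, and it extends along $\iota$ by sending a formal splitting of an idempotent $e$ on $i$ to the corresponding splitting of the idempotent $F(e)$ on $F(i)$ in $\Fin$ (which exists and is unique up to canonical iso precisely because $\Fin$ is Cauchy complete); uniqueness of the extension up to iso gives full faithfulness and essential surjectivity of $\iota^*$.

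The main obstacle, such as it is, is not conceptual but bookkeeping: one must make sure the equivalence $\Fin[\mc I] \simeq \Fin[\hatt{\mc I}]$ is exhibited with enough functoriality (a pseudonatural equivalence of $2$-functors in $\mc I$), since this proposition is a stepping stone toward the full duality $\Pretopos_{\mr f}\op \simeq \Topos_{\mr f}$ and the later site construction, where naturality in $\mc I$ is used. The other minor point to check carefully is that $\hatt{\mc I}$ is again (essentially) finite when $\mc I$ is — the idempotents on a finite category form a finite set, so only finitely many new objects are freely adjoined — which is needed both to apply Lemma~\ref{lem:finmodelcat} to $\hatt{\mc I}$ and to stay inside $\Catf$. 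Neither of these is genuinely difficult, so I expect the proof to be short.
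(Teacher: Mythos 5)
Your proof of the equivalence $\Fin[\mc I] \simeq \Fin[\hatt{\mc I}]$ is correct and takes a slightly more direct route than the paper: you invoke the universal property of the Cauchy completion against the idempotent-complete target $\Fin$, whereas the paper first identifies $\Fin[\mc I]$ with the coherent objects of the topos $[\mc I,\Set]$ and then restricts the Morita equivalence $[\mc I,\Set]\simeq[\hatt{\mc I},\Set]$ to coherent objects. Both arguments ultimately rest on the same universal property; yours avoids the detour through coherent objects for this half of the statement, and your side remarks that $\hatt{\mc I}$ is again essentially finite and that the equivalence should be pseudonatural in $\mc I$ are apt.

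The gap is in the second half. You read Lemma~\ref{lem:finmodelcat} as computing $\Mod(\Fin[\mc I]) \simeq \hatt{\mc I}$, but what it actually computes is $\Topos(\Set,[\mc I,\Set]) \simeq \hatt{\mc I}$, the category of \emph{points of the topos} $[\mc I,\Set]$. To conclude anything about $\Mod(\Fin[\mc I])$, i.e.\ pretopos functors $\Fin[\mc I]\to\Set$, you still need the identification $\Mod(\Fin[\mc I]) \simeq \Topos(\Set,[\mc I,\Set])$, and this is precisely the step the paper's proof supplies: it observes that $\Fin[\mc I]$ is exactly the full subcategory of coherent objects of $[\mc I,\Set]$, that $[\mc I,\Set]$ is therefore a coherent topos classifying the pretopos $\Fin[\mc I]$, and hence that its points coincide with models of that pretopos. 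Without this bridge --- which is the real content of the second displayed chain, and the reason the coherent-objects description cannot be dispensed with entirely --- ``apply Lemma~\ref{lem:finmodelcat} to both $\mc I$ and $\hatt{\mc I}$'' does not yield the stated equivalences. The fix is short, but it is a genuine missing step rather than bookkeeping.
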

\begin{proof}
  It is easy to see that $\Fin[\mc I]$ is exactly the coherent objects in the Grothendieck topos $[\mc I,\Set]$ because $\mc I$ is a finite category. Now we have a well-known equivalence
  \[ [\mc I,\Set] \simeq [\hatt{\mc I},\Set], \]
  thus the equivalence restricts to their corresponding coherent objects
  \[ \Fin[\mc I] \simeq \Fin[\hatt{\mc I}]. \]
  Furthermore, since the copresheaf topos $[\mc I,\Set]$ is coherent, its category of points is equivalent to the category of models of the pretopos $\Fin[\mc I]$, thus we have
  \[ \Mod(\Fin[\mc I]) \simeq \Topos(\Set,[\mc I,\Set]) \simeq \hatt{\mc I}. \qedhere \]
\end{proof}

Thus for any category $\mc I$, it embeds into $\Mod(\Fin[\mc I]) \simeq \hatt{\mc I}$ as follows: For any $x\in\mc I$, as a model it is simply the evaluation functor
\[ x^{*} : \Fin[\mc I] \to \Fin, \]
which we view as precomposition with $x : \mb 1 \to \mc I$ where $\mb 1$ is the terminal category. Since finite limits and colimits are computed point-wise in $\Fin[\mc I]$, $x^{*}$ preserves all finite limits and colimits, thus is in particular coherent. Given $f : x \to y$, this gives a natural transformation
\[ f : x \to y : \mb 1 \to \mc I, \]
and by wiskering with $f$ we get a natural transformation
\[ f^{*} : x^{*} \to y^{*}. \]

Furthermore, we can thus show that all finite pretoposes are equivalent to this form:

\begin{corollary}\label{cor:finpretoposfincaucat}
  Any finite pretopos is equivalent to one of the form $\Fin[\mc I]$ for some finite Cauchy complete category $\mc I$. In particular, all finite pretoposes are Heyting, and in fact toposes.
\end{corollary}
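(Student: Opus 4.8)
The plan is to take $\mc I := \Mod(\mc D)$ and show that $\mc D \simeq \Fin[\mc I]$. By hypothesis $\mc I$ is essentially finite, and by Lemma~\ref{lem:modcaucomp} it is Cauchy complete, so $\mc I$ is a finite Cauchy complete category; once $\mc D \simeq \Fin[\mc I]$ is established, the remaining assertions follow because $\Fin[\mc I]$ is an elementary topos whenever $\mc I$ is finite (I return to this below), and every elementary topos is a Heyting category. To build the equivalence I would write down the evident evaluation functor $\mc D \to \Fin[\mc I]$ and upgrade it to an equivalence by means of conceptual completeness.

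First I would check that every model $M \in \mc I$ is $\Fin$-valued. Since models of a pretopos are closed under ultraproducts (a categorical form of {\L}o\'{s}'s theorem), if some $M$ had $M(d)$ infinite for an object $d \in \mc D$, then ultrapowers of $M$ indexed by arbitrarily large sets would produce models of $\mc D$ of unbounded cardinality, contradicting the essential finiteness of $\mc I$. Granting this, evaluation is a well-defined functor
\[ \eta : \mc D \longrightarrow \Fin[\mc I], \qquad d \longmapsto \bigl(M \mapsto M(d)\bigr), \]
and because finite limits, finite colimits, images and quotients of equivalence relations are computed pointwise in $[\mc I,\Set]$ --- hence in its full subcategory $\Fin[\mc I]$ --- and are preserved by each model $M$, the functor $\eta$ is a morphism of pretoposes.

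Next I would identify the induced functor $\eta^{*} : \Mod(\Fin[\mc I]) \to \Mod(\mc D)$, $N \mapsto N \circ \eta$. By the preceding Proposition, $\Mod(\Fin[\mc I]) \simeq \hatt{\mc I} \simeq \mc I$, the last equivalence because $\mc I$ is already Cauchy complete; unwinding this, the model of $\Fin[\mc I]$ corresponding to $x \in \mc I$ is the evaluation functor $\mathrm{ev}_x \colon F \mapsto F(x)$, and $\mathrm{ev}_x \circ \eta$ sends $d$ to $\eta(d)(x) = x(d)$ (recall that $x$ is itself a model $\mc D \to \Set$), so it is again the model $x$. Thus $\eta^{*}$ is naturally isomorphic to $\mathrm{id}_{\mc I}$, in particular an equivalence, and by conceptual completeness for pretoposes --- which for $\Set$-valued models of coherent theories rests on Deligne's theorem that coherent toposes have enough points --- the morphism $\eta$ is itself an equivalence. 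Hence $\mc D \simeq \Fin[\mc I]$.

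It remains to justify that $\Fin[\mc I] = [\mc I,\Fin]$ is an elementary topos when $\mc I$ is finite: its subobject classifier assigns to each object $i$ the set of sieves on $i$ in $\mc I$, which is finite since $\mc I$ is, and for $F,G \in [\mc I,\Fin]$ the exponential $G^{F}$ is $\Fin$-valued because a natural transformation out of $\mc I(i,-)\times F$ is determined by its finitely many components, each lying among finitely many maps of finite sets. I expect the crux of the argument to be the appeal to conceptual completeness --- verifying that $\eta^{*}$ is genuinely an equivalence of categories and citing a form of conceptual completeness valid for $\Set$-valued models --- whereas the preliminary fact that all models are $\Fin$-valued, though needed for $\eta$ to land in $\Fin[\mc I]$, is routine given the ultraproduct observation above. (One could alternatively argue via the classifying topos, using that $\mc D \simeq \Set[\mc D]_{\mathrm{coh}}$ and that $\Set[\mc D]$ is $\Fin$-bounded, together with the cited equivalence $\Catf \simeq \Topos_{\mr f}$; which route is shorter depends on how much of the theory of $\Fin$-bounded toposes one is prepared to invoke at this point.)
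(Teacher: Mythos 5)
Your proof is correct and follows essentially the same route as the paper: take $\mc I = \Mod(\mc D)$, identify $\Mod(\Fin[\mc I]) \simeq \mc I \simeq \Mod(\mc D)$, and conclude by Makkai's conceptual completeness. You are in fact somewhat more careful than the paper's own proof, which invokes conceptual completeness directly from an abstract equivalence of model categories, whereas you supply the comparison pretopos morphism $\eta$ inducing that equivalence (together with the preliminary check, via ultraproducts, that all models are $\Fin$-valued so that $\eta$ lands in $\Fin[\mc I]$) — which is what the theorem as usually stated actually requires.
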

\begin{proof}
  This follows from Makkai's conceptual completeness theorem~\cite{makkai1982stone}. Let $\mc D$ be any finite pretopos with category of models equivalent to some finite Cauchy complete category $\mc I$. Then we have an equivalence between category of models as follows,
  \[ \Mod(\mc D) \simeq \mc I \simeq \Mod(\Fin[\mc I]), \]
  which by conceptual completeness provides a syntactic equivalence
  \[ \mc D \simeq \Fin[\mc I]. \]
  Thus all finite pretoposes are of this form.
\end{proof}

The above result also essentially appears as exercises in SGA4, see Exercise 3.12 in volume 2 of~\cite{SGA4}. There a Grothendieck topos is called \emph{finite} if it is equivalent to the category of presheaves on a finite category. This is consistent with our definition of a finite pretopos, in that Corollary~\Ref{cor:finpretoposfincaucat} implies a pretopos is finite in our sense iff its classifying topos is finite in the sense of SGA4.

\begin{remark}
  From a logical perspective, the pretopos $\Fin[\mc I]$ is the classifying theory for flat functors out of $\mc I\op$, which can be written down quite explicitly, e.g. see Chapter 2 of~\cite{caramello2018theories}. This is somewhat surprising in that all coherent first-order theories whose category of models is essentially finite can be axiomatised in a \emph{uniform} way. 
\end{remark}

For any pretopos $\mc D$ and any model of $\mc D$
\[ M : \mc D \to \Set, \]
there is also the notion of this particular model $M$ being finite. We say $M$ is \emph{finite} if it factors through $\Fin$. For a finite pretopos, not only its category of models is essentially finite, but all of its models are also finite:

\begin{lemma}\label{lem:finpretoposfinmodel}
  All models of a finite pretopos are finite.
\end{lemma}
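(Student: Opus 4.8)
The plan is to reduce to the concrete description $\mc D \simeq \Fin[\mc I]$ for a finite Cauchy complete category $\mc I$ provided by Corollary~\ref{cor:finpretoposfincaucat}. A model of $\mc D$ is then a coherent (hence flat, lex) functor $M : \Fin[\mc I] \to \Set$. Since $\mr{Mod}(\mc D) \simeq \hatt{\mc I} \simeq \mc I$ (the latter because $\mc I$ is Cauchy complete), every model $M$ is isomorphic to an evaluation functor $x^{*} : \Fin[\mc I] \to \Fin$ at some object $x \in \mc I$, as described in the paragraph following the Proposition. But $x^{*}$ by construction factors through $\Fin \hook \Set$, since finite limits and colimits in $\Fin[\mc I]$ are computed pointwise and each object of $\Fin[\mc I]$ takes values in $\Fin$. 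Hence $M$ is finite.

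First I would invoke Corollary~\ref{cor:finpretoposfincaucat} to write $\mc D \simeq \Fin[\mc I]$ with $\mc I$ finite Cauchy complete, and the Proposition to identify $\mr{Mod}(\mc D) \simeq \hatt{\mc I}$. Second, using Lemma~\ref{lem:fincatcaueqind} (or just Cauchy completeness of $\mc I$) I would note $\hatt{\mc I} \simeq \mc I$, so the equivalence $\mr{Mod}(\mc D) \simeq \mc I$ is realised, up to isomorphism of functors, by $x \mapsto x^{*}$. Third, I would observe that for every object $A$ of $\Fin[\mc I] = [\mc I, \Fin]$, the set $x^{*}(A) = A(x)$ is finite by definition of $[\mc I,\Fin]$, so $x^{*}$ factors through $\Fin$; thus each $x^{*}$ is a finite model. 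Finally, since an arbitrary model $M$ is isomorphic to some $x^{*}$ and finiteness of a model (factoring through $\Fin$) is invariant under natural isomorphism, $M$ is finite.

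The only genuine subtlety — and the step I would treat with the most care — is making sure the identification $\mr{Mod}(\mc D) \simeq \mc I$ actually sends each model to something of the form $x^{*}$ \emph{up to isomorphism}, rather than merely giving an abstract equivalence of categories; this is exactly what the explicit description of the equivalence in the Proposition (and the discussion immediately after it) supplies, so no extra work is needed beyond citing it. Everything else is routine: it suffices to note that $\Fin$ is closed under the finite limits and finite colimits used to compute coherent structure, and that these are computed pointwise in $[\mc I,\Fin]$, so evaluation functors are automatically $\Fin$-valued.
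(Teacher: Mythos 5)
Your proof is correct, but it takes a genuinely different route from the paper's. The paper argues by contradiction via the L\"owenheim--Skolem theorem: $\Fin[\mc I]$ is axiomatised by a theory in a finite signature, so if it had an infinite model it would have models of every infinite cardinality, contradicting essential finiteness of $\Mod(\Fin[\mc I])$. You instead trace the equivalence $\Mod(\mc D) \simeq \hatt{\mc I} \simeq \mc I$ concretely and observe that every model is isomorphic to an evaluation functor $x^{*}$, which is $\Fin$-valued by construction. Your approach buys self-containedness: it uses only the structure theory already established in Section~\ref{subsec:fincat} and needs no external model-theoretic input, and you correctly isolate the one point requiring care, namely that the abstract equivalence $\Mod(\Fin[\mc I]) \simeq \hatt{\mc I}$ is indeed realised (up to isomorphism) by $x \mapsto x^{*}$ --- this does follow from chasing Diaconescu's theorem through the chain of equivalences in Lemma~\ref{lem:finmodelcat}, since the point of $[\mc I,\Set]$ corresponding to the representable flat functor at $x$ has inverse image given by evaluation at $x$. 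The paper's argument is shorter to state but leans on L\"owenheim--Skolem for multi-sorted coherent theories and on the finiteness of the signature (a point the paper itself later shows is delicate, in the discussion of $\Fin[\Z]$ where countably many sorts break the converse); your argument sidesteps that entirely. Both are valid proofs.
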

\begin{proof}
  By Corollary~\ref{cor:finpretoposfincaucat}, it suffices to consider those toposes of the form $\Fin[\mc I]$. Now notice that $\Fin[\mc I]$ is generated by a \emph{countable}, in fact finite, theory, thus by L\"owenheim–Skolem theorem, if it has an infinite model then it must have a model of each cardinality, which implies $\Mod(\Fin[\mc I])$ cannot be essentially finite. Thus, any model $M : \Fin[\mc I] \to \Set$ must be finite.
\end{proof}

One may wonder whether the converse of Lemma~\Ref{lem:finpretoposfinmodel} is also true, i.e. if a pretopos only has finite models, then whether or not it must be finite as well. The answer to this question is \emph{negative}. Consider the theory $\Fin[\Z] = [\Z,\Fin]$ where $\Z$ is the integer group considered as a category with one element. It can be shown that $\Fin[\Z]$ is \emph{categorical}, i.e. it only has one model upto isomorphism, and in fact we have an equivalence
\[ \Mod(\Fin[\Z]) \simeq \pf{\Z}, \]
where $\pf\Z$ is the profinite completion of $\Z$, which is a profinite groupoid with one connected component. A canonical choice of a model of $\Fin[\Z]$ is given by the forgetful functor
\[ \Fin[\Z] \to \Set, \]
which factors through $\Fin$, thus is finite. This implies that all models of $\Fin[\Z]$ are finite, because all of them are isomorphic to a finite model. However, the profinite groupoid $\pf\Z$ is by no means finite, because its morphism group is uncountable. More generally, for any group $G$, we have the equivalence
\[ \Mod(\Fin[G]) \simeq \pf G, \]
where $\pf G$ is the profinite completion of $G$. And all of the theories of such form have only finite models, but their category of models is not necessarily essentially finite. For a reference for these calculations see~\cite{Rogers2023toposesof}.

From a logical perspective, the caveat is that a theory might have \emph{countably many sorts}. For instance, to present $\Fin[\Z]$ as the classifying pretopos of a certain coherent theory, we must have countably many sorts, because there are countably many connected components in $\Fin[\Z]$. Now even though any model of $\Fin[\Z]$ must be finite, there is no bound on the size of the interpretation of the sorts, i.e. for any $n\in\N$, there exists a sort of $\Fin[\Z]$ such that its size in this model is larger than $n$. Thus, even though this model is finite, the automorphism group of this model can still be uncountable.

\subsection{Duality for Finite Pretoposes}\label{subsec:dualityfinitepretopos}

Now that we have understood better the class of finite pretoposes, we proceed further to establish a duality result. The first task is to describe the functoriality of the construction $\mc I \mapsto \Fin[\mc I]$. For any functor $A : \mc I \to \mc J$ with $\mc I,\mc J$ finite Cauchy complete categories, by precomposition we have a functor
\[ A^{*} : \Fin[\mc J] \to \Fin[\mc I], \]
which is coherent because the coherent structure on $\Fin[\mc J]$ and $\Fin[\mc I]$ are computed point-wise. Given a natural transformation
\[ \eta : A \to B : \mc I \to \mc J, \]
there is an induced natural transformation
\[ \eta^{*} : A^{*} \to B^{*} : \Fin[\mc J] \to \Fin[\mc I], \]
by wiskering, such that for any $F : \mc J \to \Fin$,
\[ \eta^{*}_F = F\eta : FA \to FB \]
This thus gives us a 2-functor from the 2-category of finite categories to the 2-category of finite pretoposes,
\[ \Fin[-] : \Catf\op \to \Pretopos_{\mr f}. \]
In fact, we have the following diagramme
\[
  \xymatrix{
    \Catf \ar[rr] \ar[dr]_{\simeq} & & \Pretopos_{\mr f}\op \\
    & \Topos_{\mr f} \ar[ur] &
  }
\]
where as mentioned previously, under the same construction the 2-functor
\[ \Fin[-] : \Catf \simeq \Topos_{\mr f} \]
is an equivalence. Hence, we only need to show $\Topos_{\mr f} \to \Pretopos_{\mr f}\op$, by sending a geometric morphism to its inverse image, is also an equivalence.

Notice that by Corollary~\Ref{cor:finpretoposfincaucat}, the 2-functor $\Catf \to \Pretopos_{\mr f}\op$ is essentially surjective, thus so is $\Topos_{\mr f} \to \Pretopos_{\mr f}\op$. To show it is an equivalence, we prove it is also 2-fully faithful, which amounts to prove that any coherent functor between finite pretoposes are in fact the inverse image part of a geometric morphism. Hence, the crucial part is to show any coherent functor between $\Fin[\mc J]$ and $\Fin[\mc I]$ indeed preserves all finite \emph{colimits}.

We first compute the subobject lattices in $\Fin[\mc J]$. For any finite category $\mc I$, we will use $D(\mc I)$ to denote the finite distributive lattice of all upward closed subsets of $\mc I$. Given a subset $U$ of objects of $\mc I$, it is upward closed if for any $x\in U$ and $f : x \to y$ in $\mc I$, we also have $y\in U$. Using this notion, it is easy to see that:

\begin{lemma}\label{lem:subobjfin}
  For any finite category $\mc J$ and $X\in\Fin[\mc J]$, there is an isomorphism
  \[ \sub_{\Fin[\mc J]}(X) \cong D(\elem_{\mc J}X). \]
  where $\elem_{\mc J}X$ is the category of elements of $X$.
\end{lemma}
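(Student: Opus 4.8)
The plan is to unwind the definition of subobject in the copresheaf category $\Fin[\mc J] = [\mc J,\Fin]$ and match subobjects of $X$ directly with upward closed subsets of its category of elements. First I would note that, since $\mc J$ is finite and $X$ takes values in finite sets, $\elem_{\mc J}X$ has finitely many objects and morphisms, so it is a finite category and $D(\elem_{\mc J}X)$ is defined. Next I would recall that monomorphisms in a functor category are computed pointwise, so every subobject of $X$ has a unique representative given by a subfunctor $S \hookrightarrow X$, i.e. a choice of subset $S(j) \subseteq X(j)$ for each $j\in\mc J$ such that $X(f)$ restricts to a map $S(j) \to S(j')$ for every $f : j \to j'$ in $\mc J$; such an $S$ automatically lands in $\Fin$, so there is no difference between taking subobjects in $[\mc J,\Fin]$ and in $[\mc J,\Set]$. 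Thus $\sub_{\Fin[\mc J]}(X)$, as a poset, is the poset of subfunctors of $X$ ordered by pointwise inclusion.

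Then I would exhibit the correspondence explicitly. To a subfunctor $S\subseteq X$ assign the set $U_S = \{(j,x)\in\elem_{\mc J}X : x\in S(j)\}$. The functoriality condition on $S$ says precisely that whenever $(j,x)\in U_S$ and $(j,x)\to(j',x')$ is a morphism of $\elem_{\mc J}X$ — that is, $f : j\to j'$ with $X(f)(x) = x'$ — then $(j',x')\in U_S$; so $U_S$ is upward closed. Conversely any upward closed $U\subseteq\elem_{\mc J}X$ yields a subfunctor by $S_U(j) = \{x\in X(j) : (j,x)\in U\}$, and the assignments $S\mapsto U_S$ and $U\mapsto S_U$ are mutually inverse. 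Both directions are visibly monotone, so we obtain an isomorphism of posets $\sub_{\Fin[\mc J]}(X) \cong D(\elem_{\mc J}X)$. Finally, since finite meets and joins of subfunctors are computed pointwise by intersection and union, and intersections and unions of upward closed subsets are again upward closed, this bijection is a homomorphism of distributive lattices; as both sides are finite, hence uniquely Heyting, it is an isomorphism of Heyting algebras.

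I do not expect any substantial obstacle: the statement is a direct unwinding of definitions. The only points needing a little care are the identification of subobjects in the functor category with pointwise-defined subfunctors, and the check that the correspondence respects the lattice structure and not merely the ordering.
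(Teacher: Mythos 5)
Your proof is correct, but it takes a different route from the paper's. The paper first proves the special case $X = 1$, identifying subterminal objects of $\Fin[\mc J]$ with upward closed subsets of $\mc J$ directly, and then handles general $X$ by the chain of isomorphisms
\[ \sub_{\Fin[\mc J]}(X) \cong \sub_{\Fin[\mc J]/X}(1) \cong \sub_{\Fin[\elem_{\mc J}X]}(1) \cong D(\elem_{\mc J}X), \]
i.e.\ by invoking the standard slice equivalence $\Fin[\mc J]/X \simeq \Fin[\elem_{\mc J}X]$ to reduce to the terminal-object case. You instead work directly with subfunctors: monos are pointwise, a subfunctor of $X$ is a family $S(j)\subseteq X(j)$ closed under the action of morphisms, and this closure condition is verbatim the statement that the corresponding set of objects of $\elem_{\mc J}X$ is upward closed. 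Both arguments are sound. Yours is more elementary and self-contained --- it in effect inlines, for subobjects only, the content of the slice equivalence --- and it has the additional merit of explicitly verifying that the bijection respects the lattice operations, which the paper leaves as ``straight forward to verify.'' The paper's route is shorter on the page and reuses a general topos-theoretic fact that is needed elsewhere anyway; the cost is that the reader must already know (or accept) the equivalence $\Fin[\mc J]/X \simeq \Fin[\elem_{\mc J}X]$.
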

\begin{proof}
  We first show the case when $X = 1$ is the terminal object in $\Fin[\mc J]$. In this case, given any subobject $\sigma \inj 1$, there is an associated upward closed subset $U_{\sigma}$, such that for any $x\in\mc I$,
  \[ x \in U_{\sigma} \eff \sigma(x) \cong 1. \]
  It is straight forward to verify that this gives us an isomorphism. For general $X$, we have the following sequence of isomorphisms,
  \[ \sub_{\Fin[\mc J]}(X) \cong \sub_{\Fin[\mc J]/X}(1) \cong \sub_{\Fin[\elem_{\mc J}X]}(1) \cong D(\elem_{\mc J}X). \]
  This completes the proof.
\end{proof}

In particular, for any finite category $\mc J$, Lemma~\Ref{lem:subobjfin} implies that the subobject lattice on $X$ for any $X\in\Fin[\mc J]$ is a finite distributive lattice, since $\elem_{\mc J}X$ is a finite category. This allows us to show:

\begin{lemma}\label{lem:cohprecolim}
  Any coherent functor $M : \Fin[\mc J] \to \Fin[\mc I]$ preserves all finite colimits.
\end{lemma}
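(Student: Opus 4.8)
The plan is to reduce the statement to preservation of \emph{coequalizers} and then to a finiteness argument about generated equivalence relations. First I would recall the structure a coherent functor between pretoposes automatically preserves: finite limits, covers (equivalently, images), finite unions of subobjects, finite coproducts, and quotients of (effective) equivalence relations; consequently it also preserves diagonals $\Delta_Y\rightarrowtail Y\times Y$, opposite relations, and composites of relations, since each of these is assembled from images, products, and pullbacks. Since $\Fin[\mc I]$ and $\Fin[\mc J]$ are toposes (Corollary~\ref{cor:finpretoposfincaucat}), hence finitely cocomplete, and since every finite colimit is the coequalizer of a parallel pair between finite coproducts, it is enough to show that $M$ preserves coequalizers.

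Given $f,g\colon X\rightrightarrows Y$ in $\Fin[\mc J]$, I would then set $R=\mr{im}(f,g)\rightarrowtail Y\times Y$ and let $S=\Delta_Y\cup R\cup R\op$ be its reflexive--symmetric closure, so that the equivalence relation generated by $R$ is the increasing union $\bigcup_{n\ge 1}S^{\circ n}$ of iterated relational composites $S^{\circ n}=S\circ\cdots\circ S$. The crucial point is that, by Lemma~\ref{lem:subobjfin}, the subobject lattice $\sub_{\Fin[\mc J]}(Y\times Y)$ is finite, so this chain stabilises: there is some $N$ with $S^{\circ 2N}=S^{\circ N}$. Hence $S^{\circ N}$ is a (reflexive, symmetric, transitive, and automatically effective) equivalence relation on $Y$ and $\mr{coeq}(f,g)=Y/S^{\circ N}$.

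Next I would transport this construction along $M$. Preservation of diagonals, images, opposites, and finite unions gives $M(S)=\Delta_{MY}\cup\mr{im}(Mf,Mg)\cup\mr{im}(Mf,Mg)\op=:S'$, and preservation of relational composition gives $M(S^{\circ n})=(S')^{\circ n}$ in $\sub_{\Fin[\mc I]}(MY\times MY)$. Applying $M$ to the \emph{equality of subobjects} $S^{\circ 2N}=S^{\circ N}$ then yields $(S')^{\circ 2N}=(S')^{\circ N}$, so $(S')^{\circ N}$ is again an effective equivalence relation; since it contains $\mr{im}(Mf,Mg)$ and is contained in the equivalence relation that the latter generates, it must equal that generated relation, and therefore $MY/(S')^{\circ N}=\mr{coeq}(Mf,Mg)$. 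Finally, because $M$ preserves quotients of effective equivalence relations, $M(Y/S^{\circ N})=MY/(S')^{\circ N}$, so $M$ sends $\mr{coeq}(f,g)$ to $\mr{coeq}(Mf,Mg)$. Combined with preservation of finite coproducts, this yields preservation of all finite colimits.

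I expect the main obstacle to be exactly this stabilisation step and its transport. One must be certain that the transitive closure of a relation in $\Fin[\mc J]$ is attained after finitely many composites --- which is where the finiteness of $\mc J$, via the finiteness of the relevant subobject lattices in Lemma~\ref{lem:subobjfin}, is indispensable --- and that the particular number of steps $N$ valid in $\Fin[\mc J]$ remains valid in $\Fin[\mc I]$; the latter holds because ``$N$ steps suffice'' is encoded by an equation between subobjects, and such equations are preserved by any coherent functor. The remaining ingredients --- that coherent functors between pretoposes preserve the listed structure, and the reduction of arbitrary finite colimits to coequalizers of coproducts --- are standard, and I would treat them as routine.
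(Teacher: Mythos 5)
Your proof is correct and follows essentially the same route as the paper's: reduce to coequalisers, realise the generated equivalence relation as an increasing union of iterated relational composites of the reflexive--symmetric closure, use the finiteness of the subobject lattices from Lemma~\ref{lem:subobjfin} to see that this union stabilises at a finite stage, and conclude because coherent functors preserve the finitely many subobject operations involved together with effective quotients. Your version simply spells out in full detail what the paper compresses into a one-paragraph sketch.
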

\begin{proof}
  It suffices to show $M$ preserves all coequalisers. Given any coequaliser, taking a countable union of subobjects can inductively generate an effective coequaliser that has the same colimit. Since the subobject lattices in $\Fin[\mc J]$ are \emph{finite} distributive lattices, such countable unions are in fact finite, thus are preserved under coherent functors. Of course effective quotients are also preserved by coherent functors, thus they preserve arbitrary coequalisers.
\end{proof}

\begin{lemma}\label{lem:cohfinessgeom}
  Any coherent functor from $\Fin[\mc J]$ to $\Fin[\mc I]$ is the inverse image part of a geometric morphism.
\end{lemma}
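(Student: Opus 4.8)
The plan is to extend $M$ along Ind-completions to a left-exact, cocontinuous functor between the ambient presheaf toposes, and then recognise such a functor as the inverse image part of a geometric morphism. The relevant setup: since $\mc I$ and $\mc J$ are finite, the presheaf toposes $[\mc I,\Set]$ and $[\mc J,\Set]$ are locally finitely presentable, and their full subcategories of finitely presentable objects are precisely the coherent objects $\Fin[\mc I]=[\mc I,\Fin]$ and $\Fin[\mc J]=[\mc J,\Fin]$ (the coherent objects of a presheaf topos on a finite category are exactly its finite-set-valued functors, as noted in the proof of the Proposition above); consequently $[\mc I,\Set]\simeq\Ind(\Fin[\mc I])$ and $[\mc J,\Set]\simeq\Ind(\Fin[\mc J])$, cf.~\cite{adamek1994locally}. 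First I would let $M'\colon[\mc J,\Set]\to[\mc I,\Set]$ be the unique filtered-colimit-preserving extension of the composite $\Fin[\mc J]\xrightarrow{M}\Fin[\mc I]\hookrightarrow[\mc I,\Set]$, which exists and is unique by the universal property of the Ind-completion.

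Next I would verify that $M'$ is the inverse image of a geometric morphism. It is cocontinuous: it preserves filtered colimits by construction, and since every object of $[\mc J,\Set]$ is a filtered colimit of coherent objects, what remains is preservation of finite colimits — which is exactly Lemma~\ref{lem:cohprecolim}. It is also left exact: $M$ preserves finite limits because it is coherent, and finite limits commute with filtered colimits in these toposes, so the Ind-extension of a left-exact functor is again left exact. A cocontinuous functor between Grothendieck toposes has a right adjoint by the special adjoint functor theorem, so $M'$ is a left-exact left adjoint, i.e.\ the inverse image part $f^{*}$ of a geometric morphism $f\colon[\mc I,\Set]\to[\mc J,\Set]$. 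Finally, $f^{*}=M'$ restricts along $\Fin[\mc J]\hookrightarrow[\mc J,\Set]$ and $\Fin[\mc I]\hookrightarrow[\mc I,\Set]$ to $M$ itself, by construction (and because $f^{*}$, being a finite-limit-preserving left adjoint, sends coherent objects to coherent objects, so the restriction indeed lands in $\Fin[\mc I]$). This exhibits $M$ as the inverse image part of a geometric morphism, as required; combined with essential surjectivity this gives the claimed 2-full-faithfulness of $\Topos_{\mr f}\to\Pretopos_{\mr f}\op$.

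The only genuinely non-formal point is the second one, that $M'$ remains left exact; everything else is either Lemma~\ref{lem:cohprecolim} or routine properties of locally presentable categories and Grothendieck toposes. It comes down to finite limits commuting with filtered colimits in $\Set$, so I do not expect a real difficulty — only some care in the bookkeeping of how $\Ind$ interacts with finite limits. One could alternatively produce $f$ more directly via Diaconescu's theorem, using that $[\mc J,\Set]$ is the topos of sheaves on the pretopos $\Fin[\mc J]$ for its coherent coverage and that $M$ followed by the inclusion into $[\mc I,\Set]$ is a coherent functor into a topos; but the Ind-completion route is the one that makes transparent the use of Lemma~\ref{lem:cohprecolim}, which is why this section established it.
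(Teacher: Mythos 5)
Your proof is correct, but it takes a genuinely different route from the paper's. The paper stays entirely inside the finite copresheaf categories: it \emph{writes down} the right adjoint explicitly by the formula $A_{*}X(a) \cong \Fin[\mc I](A^{*}\yon^{a},X)$ and verifies the adjunction by expressing $Y\in\Fin[\mc J]$ as a \emph{finite} colimit of corepresentables, with Lemma~\ref{lem:cohprecolim} entering only at the final step to move $A^{*}$ past that colimit. You instead pass to the ambient Grothendieck toposes $[\mc I,\Set]\simeq\Ind(\Fin[\mc I])$ and $[\mc J,\Set]\simeq\Ind(\Fin[\mc J])$, extend $M$ along Ind-completion, and invoke the adjoint functor theorem, with Lemma~\ref{lem:cohprecolim} used to get cocontinuity of the extension. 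Both arguments are sound and both isolate Lemma~\ref{lem:cohprecolim} as the real content. What the paper's version buys is that it is elementary and produces the right adjoint visibly landing in $\Fin[\mc J]$ (its values are finite hom-sets), so the geometric morphism is literally one between the $\Fin$-bounded toposes, i.e.\ a morphism of $\Topos_{\mr f}$ as needed for Theorem~\ref{thm:finpredual}; your version buys conceptual clarity about why the statement is an instance of standard lfp/Ind-completion machinery. The one small point you should add to close the loop at the level of $\Topos_{\mr f}$: your $f_{*}$ a priori lives on $[\mc I,\Set]\to[\mc J,\Set]$, so note that it restricts to coherent objects, which is immediate from $f_{*}X(a)\cong[\mc I,\Set](f^{*}\yon^{a},X)$ being a finite set when $X$ is finite-valued (your final parenthetical addresses the restriction of $f^{*}$, not of $f_{*}$). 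Your glosses that the Ind-extension of a right-exact (resp.\ left-exact) functor is right-exact (resp.\ left-exact) are standard but worth a citation rather than the phrase ``finite limits commute with filtered colimits'' alone.
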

\begin{proof}
  Consider any coherent functor $A^{*} : \Fin[\mc J] \to \Fin[\mc I]$, we construct its right adjoint $A_{*}$ as follows: For any $X\in\Fin[\mc I]$ and $a\in\mc J$,
  \[ A_{*}X(a) \cong \Fin[\mc I](A^{*}\yon^a,X), \]
  where $\yon^a$ is the corepresentable on $a\in\mc J$. We show that this indeed defines the right adjoint of $A^{*}$: For any $Y\in\Fin[\mc J]$ and $X \in \Fin[\mc I]$,
  \begin{align*}
    \Fin[\mc J](Y,A_{*}X)
    &\cong \Fin[\mc J](\ct{\yon^a \to Y}\yon^a,A_{*}X) \\
    &\cong \lt{\yon^a\to Y}\Fin[\mc J](\yon^a,A_{*}X) \\
    &\cong \lt{\yon^a\to Y}\Fin[\mc I](A^{*}\yon^a,X) \\
    &\cong \Fin[\mc I](\ct{\yon^a\to Y}A^{*}\yon^a,X) \\
    &\cong \Fin[\mc I](A^{*}Y,X)
  \end{align*}
  The third isomorphism holds due to our definition of $A_{*}X$. Also notice that the last step uses the fact that $A^{*}$ preserves finite colimits.
\end{proof}

As a corollary, we obtain our desired duality result between finite pretoposes and finite Cauchy complete categories:

\begin{theorem}\label{thm:finpredual}
  There are equivalences of 2-categories as follows
  \[ \Catf \simeq \Topos_{\mr f} \simeq \Pretopos_{\mr f}\op, \]
  under the construction $\Fin[-]$.
\end{theorem}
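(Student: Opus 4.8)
The plan is to assemble the theorem from what has been established, rather than to argue directly. The first equivalence, $\Fin[-]\colon\Catf\xrightarrow{\ \simeq\ }\Topos_{\mr f}$, has already been accounted for in the discussion preceding the statement, via Corollary~2.2.23 of~\cite{johnstone2002sketches} together with the automatic essentiality of geometric morphisms between $\Fin$-bounded toposes. So the remaining task is the second equivalence. For this I would use the commuting triangle displayed above: the $2$-functor $\Catf\to\Pretopos_{\mr f}\op$ factors, up to canonical equivalence, as $\Catf\xrightarrow{\ \simeq\ }\Topos_{\mr f}\xrightarrow{\ \mathrm{Coh}(-)\ }\Pretopos_{\mr f}\op$, where $\mathrm{Coh}(-)$ sends a $\Fin$-bounded topos to its pretopos of coherent objects and a geometric morphism $f$ to the restriction $f^{*}|_{\mathrm{coh}}$ of its inverse image. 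This is well defined since, by the equivalence $\Catf\simeq\Topos_{\mr f}$ just recalled, every such $f^{*}$ is precomposition along a functor of finite Cauchy complete categories, hence preserves $\Fin$-valued presheaves; and the triangle commutes because, for a finite category $\mc I$, the coherent objects of $[\mc I,\Set]$ are precisely the objects of $\Fin[\mc I]$. By the two-out-of-three property for equivalences of $2$-categories, it then suffices to prove that $\mathrm{Coh}(-)\colon\Topos_{\mr f}\to\Pretopos_{\mr f}\op$ is itself a $2$-equivalence.

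For that I would verify essential surjectivity and $2$-full faithfulness separately. Essential surjectivity is exactly Corollary~\ref{cor:finpretoposfincaucat}: every finite pretopos is equivalent to some $\Fin[\mc I]=\mathrm{Coh}([\mc I,\Set])$. For $2$-full faithfulness, fix finite toposes $\mc E,\mc F$ and consider the induced functor on hom-categories, $f\mapsto f^{*}|_{\mathrm{coh}}$, from geometric morphisms $\mc E\to\mc F$ to coherent functors $\mathrm{Coh}\,\mc F\to\mathrm{Coh}\,\mc E$. Its essential surjectivity is precisely the content of Lemma~\ref{lem:cohfinessgeom} (which in turn rests on Lemma~\ref{lem:cohprecolim}): every coherent functor between these finite pretoposes is the inverse image part of a geometric morphism between the ambient toposes. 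Faithfulness on $2$-cells holds because the representables are coherent and generate $[\mc I,\Set]$, while inverse image functors are cocontinuous, so a natural transformation between two such is determined by its components on coherent objects. Fullness on $2$-cells follows by left Kan extension: a natural transformation between the restricted functors $f^{*}|_{\mathrm{coh}}$ and $g^{*}|_{\mathrm{coh}}$ restricts to the representables, and its colimit-preserving extension along the Yoneda embedding is a natural transformation $f^{*}\Rightarrow g^{*}$ whose restriction to the coherent objects recovers, by the generating property, the original transformation.

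I expect that the genuine mathematical difficulty of the theorem lies entirely in Lemmas~\ref{lem:cohprecolim} and~\ref{lem:cohfinessgeom} --- that coherent functors between finite pretoposes preserve finite colimits and therefore admit right adjoints and arise as inverse image functors --- and this work is already in place. What remains is essentially bookkeeping; the only points requiring care are checking that the comparison $2$-functor $\mathrm{Coh}(-)$ is honestly well defined on $1$- and $2$-cells, i.e.\ that inverse images and the natural transformations between them restrict to coherent objects, and that the resulting map on $2$-cells is bijective. Both of these reduce to the density of the representables in a presheaf topos on a finite category together with the cocontinuity of inverse image functors.
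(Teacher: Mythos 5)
Your proposal is correct and follows essentially the same route as the paper: the first equivalence is quoted from Johnstone, essential surjectivity of $\Topos_{\mr f}\to\Pretopos_{\mr f}\op$ is Corollary~\ref{cor:finpretoposfincaucat}, and fullness on $1$-cells is Lemma~\ref{lem:cohfinessgeom}, which is exactly how the paper concludes. Your only addition is the explicit verification of the bijection on $2$-cells via density of the (coherent) representables and cocontinuity of inverse images --- a routine point the paper's proof leaves implicit, and which you handle correctly.
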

\begin{proof}
  As mentioned, we only need to show that the following functor
  \[ \Topos_{\mr f} \to \Pretopos_{\mr f}\op \]
  by taking a geometric morphism to its inverse image is an equivalence. This is a corollary of Lemma~\Ref{lem:cohfinessgeom}.
\end{proof}

One nice thing about showing the above equivalence is that we can now also compute explicitly what is the left adjoint $A_{!}$ of any coherent functor between two finite pretoposes, since now we know that any such coherent functor $A^{*} : \Fin[\mc J] \to \Fin[\mc I]$ is induced by some functor $A : \mc I \to \mc J$. Since $A_{!}$ is a left adjoint, it suffices to know its value on corepresentables:

\begin{lemma}\label{lem:essenleftyon}
  For any $A : \mc I \to \mc J$, for any $x\in\mc I$ we have
  \[ A_{!}\yon^x \cong \yon^{Ax}. \]
\end{lemma}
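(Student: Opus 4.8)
The plan is to compute $A_!\yon^x$ formally, via the adjunction $A_!\dashv A^*$ together with the Yoneda lemma, without ever unwinding the pointwise colimit formula for the left Kan extension. First I would record that $A_!$ exists: either directly, since $A_! = \mathrm{Lan}_A$ and the comma categories $A/j$ are finite (as $\mc I$ is), so the defining colimits $(\mathrm{Lan}_A F)(j) = \mathrm{colim}_{A/j} F$ stay inside $\Fin$ and assemble into a genuine left adjoint to precomposition $A^*$; or by Theorem~\ref{thm:finpredual}, since $A^*$ is the inverse image of a geometric morphism between $\Fin$-bounded toposes and every such geometric morphism is essential. Either way, it suffices to identify $A_!\yon^x$ by describing the functor it corepresents on $\Fin[\mc J]$.

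Concretely, I would assemble, for an arbitrary $G\in\Fin[\mc J]$, the chain
\[
  \Fin[\mc J](A_!\yon^x, G)\;\cong\;\Fin[\mc I](\yon^x, A^*G)\;\cong\;(A^*G)(x)\;=\;G(Ax)\;\cong\;\Fin[\mc J](\yon^{Ax}, G),
\]
where the first isomorphism is the adjunction $A_!\dashv A^*$, the second is the Yoneda lemma applied to the covariant corepresentable $\yon^x = \mc I(x,-)$ on $\mc I$, the middle equality is the definition $A^*G = G\circ A$, and the last is the Yoneda lemma again, now for $\yon^{Ax} = \mc J(Ax,-)$ on $\mc J$. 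Each step is natural in $G$, so the two functors $\Fin[\mc J](A_!\yon^x,-)$ and $\Fin[\mc J](\yon^{Ax},-)$ are naturally isomorphic; since the Yoneda embedding of $\Fin[\mc J]$ is fully faithful, I conclude $A_!\yon^x\cong\yon^{Ax}$. (Equivalently, this is just the standard fact that left Kan extension carries corepresentables to corepresentables, $\mathrm{Lan}_A(\mc I(x,-))\cong\mc J(Ax,-)$.)

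There is no substantive obstacle here: the statement is formal once the existence of $A_!$ is secured, and that is already available. The only points needing a moment's care are the finiteness bookkeeping — that $\yon^x$ genuinely lies in $\Fin[\mc I]$, which holds because $\mc I(x,y)$ is a finite set for every $y\in\mc I$, and that $A_!$ does not leave $\Fin[\mc J]$ — both immediate from finiteness of $\mc I$ and $\mc J$. With the lemma in hand, one can then read off the value of $A_!$ on an arbitrary object of $\Fin[\mc I]$ by writing it as a finite colimit of corepresentables and using that $A_!$, being a left adjoint, preserves these colimits.
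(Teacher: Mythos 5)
Your proof is correct and follows essentially the same route as the paper: the chain $\Fin[\mc J](A_!\yon^x,G)\cong\Fin[\mc I](\yon^x,A^*G)\cong G(Ax)\cong\Fin[\mc J](\yon^{Ax},G)$, natural in $G$, followed by Yoneda. The additional remarks on the existence of $A_!$ and the finiteness bookkeeping are consistent with what the paper establishes before stating the lemma.
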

\begin{proof}
  For any $X\in\Fin[\mc J]$, there is the following natural isomorphisms
  \[ \Fin[\mc J](A_{!}\yon^x,X) \cong \Fin[\mc I](\yon^x,A^{*}X) \cong X(Ax) \cong \Fin[\mc J](\yon^{Ax},X). \]
  This implies the two copresheaves are naturally isomorphic.
\end{proof}

The previously shown equivalence is indeed a categorification of the $(0,1)$-level duality between finite posets and finite distributive lattices in the following sense:

\begin{proposition}\label{prop:finboollocref}
  There is a commutative diagramme as follows,
  \[
    \begin{tikzcd}
      \Posf\op & \DL_{\mr f} \\
      \Catf\op & \mb{Pretopos}_{\mr f}
      \arrow[hook, curve={height=-6pt}, from=1-1, to=2-1]
      \arrow[hook, curve={height=-6pt}, from=1-2, to=2-2]
      \arrow["\simeq"', from=2-1, to=2-2]
      \arrow["\simeq", from=1-1, to=1-2]
      \arrow[curve={height=-6pt}, from=2-1, to=1-1]
      \arrow["\dashv"{anchor=center}, draw=none, from=2-1, to=1-1]
      \arrow[curve={height=-6pt}, from=2-2, to=1-2]
      \arrow["\dashv"{anchor=center}, draw=none, from=2-2, to=1-2]
    \end{tikzcd}
  \]
  where the vertical left adjoints are posetal reflection of categories and localic reflection of pretoposes, respectively.
\end{proposition}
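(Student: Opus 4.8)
The plan is to verify the square commutes on the two adjoint pairs separately and then observe that the isomorphisms at the level of objects are natural enough to assemble into the stated diagram. First I would fix notation for the four functors involved. On the bottom row we have the equivalence $\Fin[-] : \Catf\op \to \Pretopos_{\mr f}$ from Theorem~\ref{thm:finpredual}; on the top row the classical duality $D : \Posf\op \to \DL_{\mr f}$ sending a finite poset $L$ to its lattice of upward closed subsets. The left vertical on posets is the posetal reflection $\pi_0^{\le} : \Catf \to \Posf$ (identify objects connected by a zig-zag of morphisms, then take the preorder reflection; note that since idempotents split this is genuinely a poset), left adjoint to the inclusion $\Posf\hookrightarrow\Catf$ viewing a poset as a category. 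The right vertical is the localic reflection $\mc D \mapsto \mathrm{Sub}_{\mc D}(1)$ of a pretopos, i.e. its lattice of subterminal objects, which for a finite pretopos $\Fin[\mc I]$ is a finite distributive lattice by Lemma~\ref{lem:subobjfin}; it is left adjoint to the inclusion $\DL_{\mr f}\hookrightarrow\Pretopos_{\mr f}$ sending a finite distributive lattice $H$ to the topos $\Fin[H]$ of $\Fin$-valued copresheaves on the poset $H^{\mathrm{op}}$ — equivalently $\mathrm{Sh}_{\mr f}(H)$, whose subterminal objects recover $H$.

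The key computation is the identification, for a finite category $\mc I$,
\[
  \mathrm{Sub}_{\Fin[\mc I]}(1) \;\cong\; D\big(\pi_0^{\le}(\mc I)\big).
\]
By Lemma~\ref{lem:subobjfin} with $X = 1$ we have $\mathrm{Sub}_{\Fin[\mc I]}(1) \cong D(\mc I)$, the lattice of upward closed subsets of the objects of $\mc I$. But an upward closed subset of $\mc I$ is insensitive to the difference between $\mc I$ and its preorder reflection, and it is automatically closed under the zig-zag equivalence relation once it is closed upward and downward along iso-components; more precisely, the upward closed subsets of $\mc I$ are exactly the upward closed subsets of $\pi_0^{\le}(\mc I)$, pulled back along the quotient. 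Hence $D(\mc I) \cong D(\pi_0^{\le}\mc I)$, giving the displayed isomorphism. This shows the square of left adjoints commutes up to natural isomorphism: starting from $\mc I\in\Catf$, going right-then-down gives $\mathrm{Sub}_{\Fin[\mc I]}(1)$, going down-then-right gives $D(\pi_0^{\le}\mc I)$, and these agree. The commutativity of the right-adjoint square then follows formally, since in a diagram of adjunctions where the two right-pointing arrows are equivalences, commutativity of the left-adjoint triangle is equivalent to commutativity of the right-adjoint triangle (transpose along the equivalences and the adjunction units/counits).

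I expect the main obstacle to be purely bookkeeping rather than conceptual: one must check that the object-level isomorphism $D(\mc I)\cong D(\pi_0^{\le}\mc I)$ is natural in $\mc I$ with respect to arbitrary functors (not just open ones, since we are in the full $2$-category here), and that it intertwines the two inclusions $\Posf\hookrightarrow\Catf$ and $\DL_{\mr f}\hookrightarrow\Pretopos_{\mr f}$ — i.e. that for a finite distributive lattice $H$ one has $\Fin[H] \simeq$ (the pretopos attached to $H$) compatibly, and that $\pi_0^{\le}$ restricted to posets is the identity. One should also confirm $2$-naturality, i.e. that natural transformations between functors $\mc I \to \mc J$ act trivially after applying $D$, which holds because $D$ lands in a poset so there are no nontrivial $2$-cells to track. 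Once these compatibilities are in place, the commuting square and the adjointness assertions drop out, and the claim that this is a genuine categorification of the $(0,1)$-duality is then the statement that the top face is the restriction of the bottom face along the two fully faithful inclusions, which is exactly what the diagram records.
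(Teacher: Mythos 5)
Your proof is correct in substance but verifies the opposite half of the diagram from the paper. The paper checks that the square of \emph{right adjoints} (the two inclusions) commutes: for a finite poset $I$ it shows $\sh_{\mr f}(D(I)) \simeq \Fin[I]$, using the fact that every element of $D(I)$ is a finite join of principal up-sets, so a finite sheaf on $D(I)$ is determined by its values on $I$; commutativity of the left-adjoint square then follows by uniqueness of adjoints. You instead check that the square of \emph{left adjoints} commutes, via $\sub_{\Fin[\mc I]}(1) \cong D(\mc I) \cong D(I)$ with $I$ the posetal reflection of $\mc I$, and transpose to recover the right-adjoint square. Both transposition arguments are legitimate since the horizontal arrows are equivalences, and in fact the paper records your key computation essentially verbatim, as a consequence of its own proof, in the paragraph immediately following. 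Your route has the advantage of resting directly on Lemma~\ref{lem:subobjfin}; the paper's has the advantage that the right adjoints are exactly the functors one must identify to make sense of the statement in the first place.

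Two slips in your setup should be corrected, though neither derails the main computation. First, the posetal reflection is not ``identify objects connected by a zig-zag of morphisms'': that would produce the discrete poset of connected components, for which $D(\mc I) \cong D(\pi_0^{\le}\mc I)$ already fails when $\mc I$ is the arrow category (three up-sets versus two). The correct construction is the preorder reflection (declare $x \le y$ iff $\mc I(x,y) \neq \emptyset$) followed by identification of objects satisfying $x \le y \le x$; your claim that up-closed subsets of $\mc I$ coincide with those of the reflection is true for \emph{this} construction, and it is what your argument actually uses. Second, the inclusion $\DL_{\mr f} \inj \Pretopos_{\mr f}$ sends $H$ to $\sh_{\mr f}(H)$, which is \emph{not} equivalent to the copresheaf category $[H\op,\Fin]$: for $H = D(I)$ one has $\sh_{\mr f}(H) \simeq \Fin[I]$, which is considerably smaller (already their subterminal lattices differ). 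Since your argument only invokes the existence of the vertical adjunctions rather than an explicit description of this right adjoint, the error is harmless to the logic, but the parenthetical ``equivalently'' is false as stated and should be dropped.
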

\begin{proof}
  For a finite poset $I$, its dual is the distributive lattice $D(I)$ of upward closed subsets of $I$. For the two equivalences to commute with the two right adjoint inclusions, we need to show that
  \[ \sh_{\mr f}(D(I)) \simeq \Fin[I], \]
  i.e. the category of finite sheaves on $D(I)$ is equivalent to the presheaf finite topos $\Fin[I]$. But this is evident, because any element in $D(I)$ is a \emph{finite} join of corepresentable elements, thus the sheaf condition implies that the value of a sheaf on $D(I)$ is completely determined by that in $I$. This means that the equivalences commutes with the right adjoint inclusions, thus they also commutes with the left adjoints, since they are equivalences, and adjoints are unique upto natural isomorphism.
\end{proof}

Another way to look at the above proof if to use our result in Lemma~\ref{cor:finpretoposfincaucat}, which suggests that the free pretopos generated by $D(I)$ is indeed given by $\Fin[I]$, because
\[ \Mod(\Fin[I]) \simeq I \simeq \Mod(D(I)). \]
The above reflection then also implies that, for any Cauchy complete finite category $\mc I$, we can compute its localic reflection by computing the posetal reflection of $\mc I$. In particular, let $I$ denote its posetal reflection, then we have
\[ D(\mc I) \cong \sub_{\Fin[\mc I]}(1) \cong \sub_{\Fin[I]}(1) \cong D(I). \]

\begin{remark}
  The above equivalence can also be restricted to the Boolean case. It is also well-known that a topos of the form $\Fin[\mc I]$ is \emph{Boolean} iff $\mc I$ is a groupoid. In particular, if $\mc I$ is a poset, then it must be discrete. Thus from above we furthermore obtain a duality for finite Boolean pretoposes and finite Boolean algebras as follows,
  \[
    \begin{tikzcd}
      \Fin\op & \BA_{\mr f} \\
      \Grpd_{\mr f}\op & \mb{BPretopos}_{\mr f}
      \arrow[hook, curve={height=-6pt}, from=1-1, to=2-1]
      \arrow[hook, curve={height=-6pt}, from=1-2, to=2-2]
      \arrow["\simeq"', from=2-1, to=2-2]
      \arrow["\simeq", from=1-1, to=1-2]
      \arrow[curve={height=-6pt}, from=2-1, to=1-1]
      \arrow["\dashv"{anchor=center}, draw=none, from=2-1, to=1-1]
      \arrow[curve={height=-6pt}, from=2-2, to=1-2]
      \arrow["\dashv"{anchor=center}, draw=none, from=2-2, to=1-2]
    \end{tikzcd}
  \]
\end{remark}

\subsection{Geometric Surjection}\label{subsec:surjection}

To restrict the duality $\Catf \simeq \Pretopos_{\mr f}\op$ to one for finite \emph{Heyting} pretoposes, we need a convenient geometric language on the dual side of finite Cauchy complete categories. Recall the following equivalence
\[ \Catf \simeq \Topos_{\mr f}. \]
This way, a lot of geometric properties in the domain of toposes can be transferred to the world of finite Cauchy complete categories. For any property $P$ of geometric morphisms in topos theory, we say a functor $A : \mc I \to \mc J$ in $\Catf$ is $P$, iff its induced geometric morphism $A : \Fin[\mc I] \to \Fin[\mc J]$ is $P$.

For us, the most crucial geometric notion is that of a \emph{surjection}. Recall a geometric morphism $A$ is a surjection iff its inverse image $A^{*}$ is faithful, and we would like to characterise surjections by viewing $A$ as functors $A : \mc I \to \mc J$. For this we use the extensive results in~\cite{caramello2019denseness}:

\begin{lemma}\label{lem:surjessretra}
  $A : \mc I \to \mc J$ is a surjection iff it is essentially surjective upto retracts, i.e. for any $a\in\mc J$, it is a retract in $\mc J$ of $Ax$ for some $x\in\mc I$.
\end{lemma}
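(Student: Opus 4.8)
The plan is to reduce the statement to the defining property of a surjection. Via the equivalence $\Catf\simeq\Topos_{\mr f}$, a geometric morphism is a surjection precisely when its inverse image is faithful; the inverse image of the morphism induced by $A$ is the restriction functor $A^{*}=(-)\circ A$, and it suffices to test faithfulness on coherent objects, so $A:\mc I\to\mc J$ is a surjection if and only if $A^{*}:\Fin[\mc J]\to\Fin[\mc I]$ is faithful. Hence the task is to prove that $A^{*}$ is faithful if and only if every object of $\mc J$ is a retract of some $Ax$.

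One direction is immediate. Suppose every object of $\mc J$ is a retract of some object in the image of $A$, and let $f,g:X\to Y$ in $\Fin[\mc J]$ satisfy $A^{*}f=A^{*}g$, i.e. $f_{Ax}=g_{Ax}$ for all $x\in\mc I$. Given $a\in\mc J$, choose $s:a\to Ax$ and $r:Ax\to a$ with $rs=1_{a}$; naturality of $f$ and $g$ along $r$ and $s$ yields $f_{a}=Y(r)\circ f_{Ax}\circ X(s)=Y(r)\circ g_{Ax}\circ X(s)=g_{a}$, so $f=g$ and $A^{*}$ is faithful. This step presents no difficulty.

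For the converse I would argue by contraposition, constructing an explicit failure of faithfulness. Assume some $b\in\mc J$ is not a retract of any $Ax$. Inside the corepresentable $\yon^{b}=\mc J(b,-)\in\Fin[\mc J]$, consider the subfunctor $I_{A}\subseteq\yon^{b}$ whose value at an object $c$ consists of those $h:b\to c$ that factor through some object $Ax$; this is a subfunctor because that class of morphisms is closed under postcomposition. Form the pushout $Y:=\yon^{b}\sqcup_{I_{A}}\yon^{b}$ in $\Fin[\mc J]$ of two copies of $\yon^{b}$ along the inclusion $I_{A}\hookrightarrow\yon^{b}$, with coprojections $\iota_{1},\iota_{2}:\yon^{b}\to Y$. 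Computing this pushout pointwise, $\iota_{1}$ and $\iota_{2}$ agree at $c$ exactly on the subset $I_{A}(c)$. Two observations then finish the argument: (i) $\iota_{1}\neq\iota_{2}$, since $1_{b}\notin I_{A}(b)$ — a factorisation of $1_{b}$ through some $Ax$ would exhibit $b$ as a retract of $Ax$, contrary to assumption; and (ii) $A^{*}\iota_{1}=A^{*}\iota_{2}$, since for every $x$ one has $\mc J(b,Ax)=I_{A}(Ax)$ (any $h:b\to Ax$ factors as $1_{Ax}\circ h$), so $(\iota_{1})_{Ax}=(\iota_{2})_{Ax}$. Thus $A^{*}$ sends the distinct maps $\iota_{1},\iota_{2}$ to the same map, so $A^{*}$ is not faithful and $A$ is not a surjection.

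I expect the only delicate point to be the pointwise analysis of the pushout $Y$ in the converse — pinning down which morphisms out of $b$ belong to $I_{A}$ and confirming that gluing the two copies of $\yon^{b}$ separates exactly the morphisms outside $I_{A}$; the rest is formal. This direction can alternatively be deduced from the denseness criteria of~\cite{caramello2019denseness}, which govern faithfulness of restriction functors, but the self-contained argument above seems preferable.
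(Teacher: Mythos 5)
Your proposal is correct, but it takes a genuinely different route from the paper. The paper disposes of this lemma in one step by viewing $A\op : \mc I\op \to \mc J\op$ as a comorphism of sites and invoking the denseness criterion of Proposition 7.1 in Caramello's work: the induced geometric morphism is a surjection iff $A\op$ is dense, which unwinds directly to the retract condition. You instead reduce to the statement that $A^{*} : \Fin[\mc J] \to \Fin[\mc I]$ is faithful iff every $a\in\mc J$ is a retract of some $Ax$, and prove both directions by hand: the forward direction is a routine naturality computation, and for the converse you exhibit an explicit failure of faithfulness via the two coprojections of the pushout $\yon^{b}\sqcup_{I_{A}}\yon^{b}$, which differ at $1_{b}$ (precisely because $b$ is not a retract of any $Ax$) but are identified by $A^{*}$ since $I_{A}(Ax)=\mc J(b,Ax)$ for every $x$. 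I checked the delicate points: $I_{A}$ is indeed a subfunctor, the pushout is computed pointwise in $\Fin[\mc J]$ and its coprojections agree at $c$ exactly on $I_{A}(c)$, and all test objects are finite copresheaves, so the argument lives entirely in $\Fin[\mc J]$ where the surjectivity of the geometric morphism $\Fin[\mc I]\to\Fin[\mc J]$ is actually tested. What each approach buys: the paper's citation is shorter and situates the lemma within the general machinery of (co)morphisms of sites that it leans on elsewhere, while your argument is self-contained, elementary, and makes the witness to non-faithfulness completely concrete --- which is arguably more illuminating for readers unfamiliar with the denseness criteria. You correctly note the citation route as an alternative at the end; that alternative is exactly the paper's proof.
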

\begin{proof}
  Note that the geometric morphism $A : \Fin[\mc I] \to \Fin[\mc J]$ is induced by viewing the opposite functor $A\op : \mc I\op \to \mc J\op$ as a \emph{comorphism of site}. By Proposition 7.1 in~\cite{caramello2019denseness}, since $A\op$ preserves covers, $A$ is a surjection iff $A\op$ is \emph{dense}, iff for any $a\in\mc J$, there exists some $x\in\mc I$ and an arrow $s : a \to Ax$ such that $1_a$ factors through $s$. This is equivalent to say that there exists $r : Ax \to a$ with $rs = 1_a$, i.e. $A$ is essentially surjective upto retracts.
\end{proof}

Recall that we have an orthogonal factorisation system in $\Topos_{\mr f}$ consisting of geometric inclusions and geometric surjections. By Lemma~\Ref{lem:surjessretra}, this also translates to the following orthogonal factorisations system $(E,M)$ on $\Catf$:
\begin{align*}
  E &= \text{essentially surjective upto retracts},\\
  M &= \text{fully faithful whose image is closed under retracts} 
\end{align*}
It is well-known that these form an orthogonal factorisation system on $\Cat$, e.g. see~\cite{dupont2003proper}. It also restricts one to $\Catf$, since for any functor $A : \mc I \to \mc J$ in $\Catf$, the factorisation is given by $\mc I \to \hatt{\im A} \to \mc J$, where $\hatt{\im A}$ is the retract closure of $\im A$ in $\mc J$, which is again Cauchy complete. However, the nice thing in $\Catf$ is that the image of a full functor is automatically closed under retract:

\begin{lemma}\label{lem:fullcloseretr}
  Let $A : \mc I \to \mc J$ be a full functor in $\Catf$. Then the image of $A$ is closed under retracts in $\mc J$, i.e. for any retract $r : Ax \gb a : s$ with $x\in\mc I$ and $a\in\mc J$ there exists some $y\in\mc I$ that $Ay \cong a$.
\end{lemma}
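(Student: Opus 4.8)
I want to show: if $A:\mc I\to\mc J$ is full (and $\mc I,\mc J$ finite Cauchy complete), and $a\in\mc J$ is a retract of $Ax$ for some $x\in\mc I$, then $a\cong Ay$ for some $y\in\mc I$. The natural strategy is to transport the retract data along $A$: a retract $r:Ax\to a$, $s:a\to Ax$ with $rs=1_a$ gives an idempotent $e=sr:Ax\to Ax$. Since $A$ is full, there is some $\tilde e:x\to x$ in $\mc I$ with $A\tilde e=e$. The catch is that $\tilde e$ need not itself be idempotent — fullness only controls the image on hom-sets, not the composition — so I cannot immediately split $\tilde e$ in $\mc I$.

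**Fixing idempotency.** The standard trick for this is available because $\mc I$ is a finite category: the sequence $\tilde e,\tilde e^2,\tilde e^3,\dots$ of endomorphisms of $x$ takes only finitely many values, so there exist $n\geq 1$ and $k\geq 1$ with $\tilde e^{\,n+k}=\tilde e^{\,n}$, and then a suitable power $f=\tilde e^{\,m}$ (take $m$ a multiple of $k$ with $m\geq n$) is a genuine idempotent in $\mc I$. I then need $Af$ to still split onto (an object isomorphic to) $a$. Since $A\tilde e=e=sr$ and $rs=1_a$, one computes $A(\tilde e^{\,j})=(sr)^j=s\,(rs)^{j-1}\,r=sr=e$ for every $j\geq 1$; hence $Af=e=sr$ as well, and the pair $(r,s)$ still exhibits $a$ as the splitting of the idempotent $Af$ on $Ax$. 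Now split $f$ in $\mc I$ — possible since $\mc I$ is Cauchy complete — to get $y\in\mc I$ with maps $p:x\to y$, $q:y\to x$, $qp=f$, $pq=1_y$. Applying $A$, the object $Ay$ is a splitting of $A(qp)=Af$ on $Ax$. Splittings of an idempotent are unique up to canonical isomorphism, so $Ay\cong a$, which is exactly what is claimed.

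**Expected main obstacle.** The only real subtlety is the passage from the (possibly) non-idempotent lift $\tilde e$ to an honest idempotent, and checking that this passage does not change the object being split off — i.e. the little computation $A(\tilde e^{\,j})=sr$ for all $j\geq 1$, which crucially uses $rs=1_a$. Everything else (finiteness giving eventual periodicity of powers, Cauchy completeness giving the splitting in $\mc I$, uniqueness of idempotent splittings up to iso) is routine. I should also note that the conclusion only needs $a$ to be a retract of something in the image of $A$, and that $\mc J$ being Cauchy complete is not actually used in the argument — only finiteness of $\mc I$ and Cauchy completeness of $\mc I$ — so I can state the lemma in that slightly cleaner form if convenient, or just keep the hypotheses as given since they cost nothing in $\Catf$.
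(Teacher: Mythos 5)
Your proof is correct and follows essentially the same route as the paper's: lift the idempotent $sr$ along the full functor, use finiteness of $\mc I$ to replace the (possibly non-idempotent) lift by an idempotent power that still maps to $sr$, split it by Cauchy completeness of $\mc I$, and conclude via absoluteness of idempotent splittings. The only cosmetic difference is that the paper phrases the "idempotent power" step in terms of the multiplicatively closed set of all lifts of $sr$, whereas you work with the cyclic subsemigroup generated by one chosen lift; these are the same finite-semigroup argument.
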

\begin{proof}
  Suppose we have a retract pair in $\mc J$
  \[ r : Ax \gb a : s. \]
  Let $g = sr : Ax \to Ax$, and let $\mc I_g(x,x)$ denote the set of endomorphisms on $x$ in $\mc I$ that is mapped to $g$. By fullness, $\mc I_g(x,x)$ is non-empty. Since $g$ is idempotent, we know that $\mc I_g(x,x)$ is a multiplicative closed set in $\mc I(x,x)$. Since $\mc I_g(x,x)$ is finite, every orbit in $\mc I_g(x,x)$ must be finite, thus there exists an idempotent map $f : x \to x$ which is mapped to $g$. Since $\mc I$ is Cauchy complete, the splitting of $f$, which is a pair $x \gb y$, exists in $\mc I$. Since splitting of idempotency is an absolute colimit, it is preserved by $A$, hence we must have $Ay \cong a$.
\end{proof}

Thus, the (surjection, inclusion)-factorisation system on $\Topos_{\mr f}$ transports to the (essentially surjective upto retract, fully faithful)-factorisation system in $\Catf$. For any functor $A : \mc I \to \mc J$, it can be factored as $\mc I \to \hatt{\im A} \to \mc J$, where $\hatt{\im A}$ is the retract closure of images of $A$ in $\mc J$. Notice that to take retract closure is forced when working in $\Catf$, because the image $\im A$ might not be Cauchy complete.

The argument in the proof of Lemma~\Ref{lem:fullcloseretr} is typical for the class of finite Cauchy complete categories, and we will see further examples when we look at open functors to describe the duality for finite Heyting pretoposes.

However, the class of surjections is not entirely well-behaved in $\Catf$, because it is not stable under 2-pullbacks. For instance, consider the following category in $\Catf$
\[ \mc I :=
  \begin{tikzcd}
    \cha \ar[r, shift left, tail] & \cha \ar[l, shift left, two heads]
  \end{tikzcd}
\]
which can be viewed as the classifying category of retract pairs. Note that $\mc I$ has two points, which we denote as
\[ s,r : \mb 1 \to \mc I, \]
where $r$ denote the point on the right and $s$ the point on the left, which is a retract of $r$. By Lemma~\Ref{lem:surjessretra}, the point $r$ is actually \emph{surjective}, because the only other object is a retract of it. However, its 2-pullback along the point $s$ is \emph{empty},
\[
  \xymatrix{
    \emp \ar[r] \ar[d] & \mb 1 \ar[d]^r \\
    \mb 1 \ar[r]_s & \mc J
  }
\]
because the two points are \emph{not} isomorphic. The functor $\emp \to \mb 1$ is evidently \emph{not} a surjection. This motivates us to define the following stronger notion of surjection:

\begin{definition}
  We say a functor $A : \mc I \to \mc J$ is \emph{stably surjective}, if its pullback along any other functor is again surjective.
\end{definition}

It is easy to see that stably surjective morphisms are exactly \emph{essentially surjective functors}:

\begin{lemma}
  A functor $A : \mc I \to \mc J$ is stably surjective iff it is essentially surjective.
\end{lemma}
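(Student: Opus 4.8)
The plan is to prove the two implications separately. The key tool is Lemma~\ref{lem:surjessretra} — a functor in $\Catf$ is a surjection iff it is essentially surjective up to retracts — together with the explicit description of $2$-pullbacks in $\Catf$ as iso-comma categories.

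First I would handle the direction "essentially surjective $\Rightarrow$ stably surjective". Let $A : \mc I \to \mc J$ be essentially surjective and let $B : \mc K \to \mc J$ be arbitrary; I must show the $2$-pullback $\mc P := \mc I \times_{\mc J} \mc K$ projects to $\mc K$ by a surjection. Recall that $\mc P$ has objects the triples $(x, k, \phi)$ with $x \in \mc I$, $k \in \mc K$ and $\phi : Ax \cong Bk$ an isomorphism of $\mc J$, morphisms the compatible pairs, and that $\mc P$ is again finite Cauchy complete: finiteness is immediate, and an idempotent $(e,f)$ on $(x,k,\phi)$ splits componentwise, the required splitting isomorphism $Ax' \cong Bk'$ being obtained by transporting $\phi$ along the two splittings — so this $2$-pullback is genuinely computed in $\Catf$. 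Now for any $k \in \mc K$, essential surjectivity of $A$ gives $x \in \mc I$ and an isomorphism $\phi : Ax \cong Bk$, so $(x,k,\phi) \in \mc P$ lies over $k$; hence the projection $\mc P \to \mc K$ is essentially surjective, a fortiori essentially surjective up to retracts, so a surjection by Lemma~\ref{lem:surjessretra}. Since $B$ was arbitrary, $A$ is stably surjective.

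For the converse, suppose $A$ is stably surjective, and fix an object $a \in \mc J$; the goal is to find $x \in \mc I$ with $Ax \cong a$. Form the $2$-pullback of $A$ along the functor $a : \mb 1 \to \mc J$ naming $a$, namely $\mc P := \mc I \times_{\mc J} \mb 1$, whose objects are the pairs $(x, \phi)$ with $\phi : Ax \cong a$; by stable surjectivity the projection $\mc P \to \mb 1$ is a surjection. But a functor into $\mb 1$ is a surjection iff its domain is non-empty, since a surjection is in particular essentially surjective up to retracts and hence forces the source to have an object; so $\mc P$ must be non-empty, and any object $(x,\phi) \in \mc P$ witnesses $Ax \cong a$. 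As $a$ was arbitrary, $A$ is essentially surjective.

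The content of the argument sits entirely in this last implication, which is the positive counterpart of the example just preceding the statement: the $2$-pullback of $A$ along a point $a$ of $\mc J$ is empty precisely when $a$ is not isomorphic to any object in the image of $A$, and $\emp$ admits no surjection onto $\mb 1$. I do not expect any real obstacle — the only points needing care are the iso-comma description of $2$-pullbacks and the verification that it stays within $\Catf$.
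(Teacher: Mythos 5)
Your proof is correct and follows the same route as the paper: the forward direction via stability of essential surjectivity under $2$-pullback, and the converse by pulling back along a point $a : \mb 1 \to \mc J$ (you argue it directly where the paper takes the contrapositive, but the pullback square is identical). The extra details you supply — the iso-comma description and the check that the $2$-pullback stays in $\Catf$ — are sound and merely make explicit what the paper leaves implicit.
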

\begin{proof}
  The if direction is easy because essentially surjective functors are stable under 2-pullbacks. On the other hand, suppose $A$ is not essentially surjective, then we can find $a\in\mc J$ which does not lie in the essential image of $A$. This way, again we have a 2-pullback square
  \[
    \xymatrix{
      \emp \ar[r] \ar[d] & \mc I \ar[d]^A \\
      \mb 1 \ar[r]_a & \mc J
    }
  \]
  which implies $A$ cannot be stably surjective.
\end{proof}

This stronger notion will play an important role when we study open morphisms in the next section, and ultimately in constructing a 2-site to represent finitely presented Heyting pretoposes.

\begin{remark}
  Note that there are further factorisation systems in $\Topos_{\mr f}$, and all of them have corresopnding factorisation systems in $\Catf$. We record the following results without proof, where most of which can be seen as consequences of the results in~\cite{caramello2019denseness}:
  \begin{itemize}
  \item The (hyperconnected, localic)-factorisation system in $\Topos_{\mr f}$ corresponds to the (essentially surjective and full, faithful)-factorisation on $\Catf$. Note that Lemma~\Ref{lem:fullcloseretr} implies that this factorisation always produces Cauchy complete categories.
  \item The pair (terminally connected, \'etale) forms an orthogonal factorisation system in the 2-category $\mb{EssTopos}$ of Grothendieck toposes and essential geometric morphisms. An essential geometric morphism $f$ is terminally connected iff the further left adjoint $f_{!}$ of $f^{*}$ preserves the terminal object. Since all geometric morphisms between $\Fin$-bounded toposes are essential, they again form an orthogonal factorisation system in $\Topos_{\mr f}$, and this exactly corresopnds to the (initial, discrete opfibration)-factorisation system in $\Catf$.
  \end{itemize}
\end{remark}

\subsection{Open Functors and Duality of Finite Heyting Pretoposes}\label{subsec:openheyting}

As we have mentioned, it turns out that any finite pretopos is automatically Heyting. Let $\mb{HPretopos}_{\mr f}$ be the 2-category of \emph{finite Heyting pretoposes}, which is defined to be a wide subcategory of $\Pretopos_{\mr f}$ with Heyting functors between them. Recall that a geometric morphism is called \emph{open} if its inverse image functor is Heyting. According to Lemma~\Ref{lem:cohfinessgeom}, to find the dual of $\mb{HPretopos}_{\mr f}$, it is equivalent to characterise \emph{open} geometric morphisms between $\Fin$-bounded toposes.

In fact, Chapter C3.1 of~\cite{johnstone2002sketches} already contains a classification of open geometric morphisms between presheaf toposes which are induced by a functor between the underlying categories. This suffices for our purpose because any geometric morphism in $\Topos_{\mr f}$ arises this way. Concretely, a functor $A : \mc I \to \mc J$ in $\Catf$ induces an \emph{open} geometric morphism iff it satisfies the following combinatorial property: For any $x\in\mc I$ and any $f : Ax \to a$ in $\mc J$, there exists some $u : x \to y$ in $\mc I$ such that $a$ is a \emph{retract} of $Ay$ under $Ax$,
\[
  \begin{tikzcd}
    a && Ay \\
    & Ax
    \arrow["f", from=2-2, to=1-1]
    \arrow["Au"', from=2-2, to=1-3]
    \arrow[curve={height=6pt}, tail, from=1-1, to=1-3]
    \arrow[curve={height=6pt}, two heads, from=1-3, to=1-1]
  \end{tikzcd}
\]
From our characterisation of surjections, we can rephrase it as follows:

\begin{definition}
  We say a functor $A : \mc I \to \mc J$ is \emph{upwards surjective} if for any $x\in\mc I$, the induced functor
  \[ A_{x/} : \mc I_{x/} \to \mc J_{Ax/} \]
  is a surjection.
\end{definition}

Although open functors are not necessarily surjective, they will be once we restricts to any upwards part of the functor:

\begin{lemma}\label{lem:heresurjopen}
  $A : \mc I \to \mc J$ is open iff it is upwards surjective.
\end{lemma}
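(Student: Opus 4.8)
The plan is to recognise that, once translated, the statement is essentially a tautology: the combinatorial characterisation of open functors recalled just above this lemma is \emph{verbatim} the assertion that every coslice functor $A_{x/}$ is essentially surjective up to retracts, and by Lemma~\ref{lem:surjessretra} the latter says exactly that $A_{x/}$ is a surjection. So the only real work is a careful unwinding of definitions. I would begin by restating the criterion in the form: $A$ is open iff for every $x\in\mc I$ and every arrow $f : Ax \to a$ in $\mc J$ there are a morphism $u : x \to y$ in $\mc I$ and arrows $s : a \to Ay$, $r : Ay \to a$ with $rs = 1_a$ and $sf = Au$; the remaining relation $r\circ Au = f$ is then automatic, since $r\circ Au = rsf = f$.

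Next I would unwind the notion of upwards surjectivity. An object of the coslice $\mc J_{Ax/}$ is an arrow $f : Ax \to a$ of $\mc J$; an object of $\mc I_{x/}$ is an arrow $u : x \to y$ of $\mc I$; and $A_{x/}$ sends $u$ to $Au : Ax \to Ay$. A morphism in $\mc J_{Ax/}$ from $f$ to $Au$ is precisely an arrow $s : a \to Ay$ in $\mc J$ with $sf = Au$, and dually a morphism from $Au$ to $f$ is an arrow $r : Ay \to a$ with $r\circ Au = f$; so $f$ is a retract of $A_{x/}(u)$ in $\mc J_{Ax/}$ exactly when such $s,r$ exist with $rs = 1_a$. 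Hence $A_{x/}$ is essentially surjective up to retracts iff for every $f : Ax \to a$ there exists $u : x \to y$ witnessing precisely the data in the openness criterion above. Before invoking Lemma~\ref{lem:surjessretra} I would also observe that $\mc I_{x/}$ and $\mc J_{Ax/}$ lie in $\Catf$: they are visibly finite, and they are Cauchy complete because if $e : y \to y$ is idempotent in $\mc I$ with $eu = u$ and $y \gb z$ is its splitting in $\mc I$, then the composite $x \xrightarrow{u} y \to z$ underlies the splitting of the corresponding idempotent on the object $u$ of $\mc I_{x/}$. Putting these together, $A$ is open iff each $A_{x/}$ is essentially surjective up to retracts iff (by Lemma~\ref{lem:surjessretra}) each $A_{x/}$ is a surjection iff $A$ is upwards surjective.

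I do not expect a genuine obstacle here: the whole content is the observation that the two combinatorial conditions literally coincide, and the only thing requiring a little care is the bookkeeping of morphisms in the coslices together with the remark that coslices of finite Cauchy complete categories are again finite Cauchy complete, so that the notion of surjection and Lemma~\ref{lem:surjessretra} apply to $A_{x/}$ on the nose. If one wishes to avoid even that remark, it suffices to note that Lemma~\ref{lem:surjessretra} uses only that the opposite functor is a cover-preserving comorphism of sites, which holds for $A_{x/}$ between arbitrary finite categories.
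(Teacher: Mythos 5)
Your proof is correct and follows the same route as the paper: the paper's proof simply invokes Lemma~\ref{lem:surjessretra} to translate ``surjection'' into ``essentially surjective up to retracts'' and then observes that the quoted combinatorial criterion for openness is exactly upwards surjectivity. Your version just spells out the bookkeeping (the coslice unwinding, the redundancy of $r\circ Au = f$, and the Cauchy completeness of coslices) that the paper leaves as ``straightforward.''
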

\begin{proof}
  Recall that by Proposition~\ref{lem:surjessretra}, a functor is a surjection iff it is essentially surjective upto retracts. It is then straight forward to see that openness is equivalent to being upwards surjective.
\end{proof}

This way, let us use $\Casf$ to denote the wide subcategory of $\Catf$ consisting of open, or equivalently upwards surjective, functors. As a consequence, we have the following duality result for finite Heyting pretoposes:

\begin{theorem}\label{thm:dualfinhp}
  The equivalence $\Catf\op \simeq \Pretopos_{\mr f}$ restricts to one 
  \[ \Casf\op \simeq \mb{HPretopos}_{\mr f} \]
  between the opposite 2-category of finite Cauchy complete categories with open functors and the 2-category of finite Heyting pretoposes.
\end{theorem}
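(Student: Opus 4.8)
The plan is to obtain Theorem~\ref{thm:dualfinhp} by simply \emph{restricting} the biequivalence $\Fin[-]\colon\Catf\op \simeq \Pretopos_{\mr f}$ of Theorem~\ref{thm:finpredual} to the distinguished wide sub-$2$-categories $\Casf \subseteq \Catf$ and $\mb{HPretopos}_{\mr f} \subseteq \Pretopos_{\mr f}$. Since both of these are \emph{locally full} sub-$2$-categories — a natural isomorphism between open functors is nothing more than a natural isomorphism, and likewise for Heyting functors — all of the $2$-cell data is automatically inherited, so the entire content of the restriction reduces to matching the $1$-cells correctly.

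First I would record that the objects already match: by Corollary~\ref{cor:finpretoposfincaucat} every finite pretopos is Heyting, so $\mb{HPretopos}_{\mr f}$ and $\Pretopos_{\mr f}$ have the same objects, and $\Casf$ and $\Catf$ have the same objects by definition; hence essential surjectivity onto objects is already part of Theorem~\ref{thm:finpredual}. Next, for $1$-cells I would chain together three facts. By definition, a geometric morphism is \emph{open} exactly when its inverse image is a Heyting functor. By the classification of open geometric morphisms between presheaf toposes induced by functors (Johnstone C3.1, recalled just above), the geometric morphism $\Fin[\mc I] \to \Fin[\mc J]$ attached to $A\colon \mc I \to \mc J$ is open iff $A$ satisfies the stated retract-lifting property. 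And by Lemma~\ref{lem:heresurjopen} this retract-lifting property is precisely upwards surjectivity, i.e.\ membership in $\Casf$. Combining, $A \in \Casf(\mc I,\mc J)$ iff $\Fin[A] = A^{*}$ is Heyting, i.e.\ $A^{*} \in \mb{HPretopos}_{\mr f}(\Fin[\mc J],\Fin[\mc I])$.

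Finally, since by Theorem~\ref{thm:finpredual} every coherent functor $\Fin[\mc J] \to \Fin[\mc I]$ is of the form $\Fin[A]$ for an essentially unique $A$, and since the biconditional above shows that $\Fin[-]$ both preserves and reflects the property ``open''\,/\,``Heyting'', the local equivalences $\Catf(\mc I,\mc J) \simeq \Pretopos_{\mr f}(\Fin[\mc J],\Fin[\mc I])$ restrict to equivalences $\Casf(\mc I,\mc J) \simeq \mb{HPretopos}_{\mr f}(\Fin[\mc J],\Fin[\mc I])$ between the corresponding full subcategories. These local equivalences together with object-level essential surjectivity assemble into the desired biequivalence $\Casf\op \simeq \mb{HPretopos}_{\mr f}$. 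The substantive work — the duality itself, the C3.1 characterisation, and the translation into upwards surjectivity in Lemma~\ref{lem:heresurjopen} — is all already in hand, so there is no real obstacle here; the only mild care required is the formal verification that a biequivalence restricts to locally full sub-$2$-categories that correspond under its action on $1$-cells, which is routine.
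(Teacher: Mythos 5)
Your proposal is correct and follows essentially the same route as the paper: both reduce the statement to Theorem~\ref{thm:finpredual} together with the observation that Heyting inverse images correspond by definition to open geometric morphisms, which Johnstone's C3.1 classification (via Lemma~\ref{lem:heresurjopen}) identifies with the upwards surjective functors defining $\Casf$. The extra bookkeeping you supply about locally full wide sub-$2$-categories and matching of objects is exactly the routine verification the paper leaves implicit.
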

\begin{proof}
  By Theorem~\ref{thm:finpredual}, $\mb{HPretopos}_{\mr f}$ is dual to the wide subcategory of $\Catf$ of those functors inducing open geometric morphisms. And as mentioned, the characterisation of open geometric morphisms as open functors is given in~\cite{johnstone2002sketches}.
\end{proof}

\begin{remark}
  Just like that the duality between finite Cauchy complete categories and finite pretoposes restricts to the localic case, so is our duality of open functors with Heyting morphisms. This means that we have a commuative diagramme as follows,
  \[
    \begin{tikzcd}
      \Por\op && \HAf \\
      & \Posf\op && {\DL_{\mr f}} \\
      \Casf\op && {\mb{HPretopos}_{\mr f}} \\
      & \Catf\op && {\Pretopos_{\mr f}}
      \arrow[tail, from=1-1, to=2-2]
      \arrow["\simeq", from=1-1, to=1-3]
      \arrow[tail, from=1-3, to=2-4]
      \arrow["\simeq"{pos=0.2}, from=2-2, to=2-4]
      \arrow[curve={height=-6pt}, hook, from=1-3, to=3-3]
      \arrow[tail, from=3-1, to=4-2]
      \arrow[curve={height=-6pt}, hook, from=1-1, to=3-1]
      \arrow[curve={height=-6pt}, from=3-1, to=1-1]
      \arrow["\dashv"{anchor=center, pos=0.35}, draw=none, from=3-1, to=1-1]
      \arrow[curve={height=-6pt}, from=3-3, to=1-3]
      \arrow["\dashv"{anchor=center, pos=0.35}, draw=none, from=3-3, to=1-3]
      \arrow[curve={height=-6pt}, hook, from=2-2, to=4-2]
      \arrow[curve={height=-6pt}, from=4-2, to=2-2]
      \arrow["\dashv"{anchor=center, pos=0.35}, draw=none, from=4-2, to=2-2]
      \arrow["\simeq", from=4-2, to=4-4]
      \arrow[curve={height=-6pt}, hook, from=2-4, to=4-4]
      \arrow["\simeq"{pos=0.2}, from=3-1, to=3-3]
      \arrow[tail, from=3-3, to=4-4]
      \arrow[curve={height=-6pt}, from=4-4, to=2-4]
      \arrow["\dashv"{anchor=center, pos=0.35}, draw=none, from=4-4, to=2-4]
    \end{tikzcd}
  \]
\end{remark}

In our finitary case, open functors have nicer properties. As mentioned in Section~\Ref{subsec:surjection}, surjections are not necessarily stable under 2-pullbacks in $\Catf$. One important observation for open functors in $\Catf$ is that, open morphisms $A : \mc I \to \mc J$ in $\Catf$ are not just upwards surjective, but \emph{upwards stably surjective}:

\begin{proposition}\label{prop:upwardesssurj}
  For any functor $A : \mc I \to \mc J$ in $\Catf$, $A$ is open iff it is upwards stably surjective, i.e. for any $x\in\mc I$, $A_{x/} : \mc I_{x/} \to \mc J_{Ax/}$ is essentially surjective.
\end{proposition}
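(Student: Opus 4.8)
The plan is to split into the two directions, the reverse being immediate and the forward direction carrying all the content. If every $A_{x/}$ is essentially surjective, then a fortiori it is essentially surjective up to retracts, hence a surjection by Lemma~\ref{lem:surjessretra}, so $A$ is upwards surjective and therefore open by Lemma~\ref{lem:heresurjopen}. For the forward direction, assume $A$ is open; by Lemma~\ref{lem:heresurjopen} together with Lemma~\ref{lem:surjessretra} each $A_{x/}$ is essentially surjective up to retracts, and the task is to upgrade ``up to retracts'' to ``on the nose''. First I would record that $\mc I_{x/}$ and $\mc J_{Ax/}$ are finite and Cauchy complete (an idempotent on $(y,w\colon x\to y)$ is an idempotent $h\colon y\to y$ in $\mc I$ with $hw=w$, and its splitting in $\mc I$ inherits a coslice structure over $x$), so idempotents split there and splittings are unique up to canonical isomorphism. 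Fix $x$ and $(b,f')\in\mc J_{Ax/}$; by surjectivity of $A_{x/}$ there are $u\colon x\to y$ in $\mc I$ and morphisms $s\colon (b,f')\to A_{x/}(u)$, $r\colon A_{x/}(u)\to(b,f')$ with $rs=1$, so $e:=sr$ is an idempotent endomorphism of $A_{x/}(u)=(Ay,Au)$ in $\mc J_{Ax/}$ whose splitting is $(b,f')$. The goal is thereby reduced to lifting $e$ to an idempotent endomorphism $\tilde e$ of $(y,u)$ in $\mc I_{x/}$ with $A_{x/}\tilde e=e$: splitting $\tilde e$ in $\mc I_{x/}$ yields an object mapped by $A_{x/}$ to a splitting of $e$, hence isomorphic to $(b,f')$.

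Next I would reduce the lifting of $e$ to producing \emph{any} endomorphism of $(y,u)$ in $\mc I_{x/}$ whose image under $A_{x/}$ is $e$: once such an $h$ exists, the set of endomorphisms of $(y,u)$ mapping to $e$ is a non-empty finite subsemigroup of $\mathrm{End}_{\mc I_{x/}}(y,u)$ (it is closed under composition since $ee=e$), hence contains an idempotent, exactly as in Lemma~\ref{lem:fullcloseretr}. To manufacture such an $h$ I would iterate the openness condition. At stage $k$ I have $(y_k,u_k)\in\mc I_{x/}$ and an idempotent endomorphism $e_k$ of $A_{x/}(u_k)$ with splitting $(b,f')$ (starting from $y_0=y$, $u_0=u$, $e_0=e$); if $e_k$ is the identity we are already done, otherwise openness applied at $y_k$ to $e_k$, viewed as the object $(Ay_k,e_k)$ of $\mc J_{Ay_k/}$, gives $v_{k+1}\colon y_k\to y_{k+1}$ in $\mc I$ and section/retraction maps $\alpha_{k+1},\beta_{k+1}$ exhibiting $(Ay_k,e_k)$ as a retract of $(Ay_{k+1},Av_{k+1})$, from which one reads off $u_{k+1}:=v_{k+1}u_k$ and $e_{k+1}:=Av_{k+1}\beta_{k+1}=\alpha_{k+1}e_k\beta_{k+1}$, again idempotent with splitting $(b,f')$; the relevant identities are $\beta_{k+1}\alpha_{k+1}=1$, $Av_{k+1}=\alpha_{k+1}e_k$, and $s_k=\alpha_k\cdots\alpha_1 s_0$, $r_k=r_0\beta_1\cdots\beta_k$, so $e_k=(\alpha_k\cdots\alpha_1)\,e_0\,(\beta_1\cdots\beta_k)$.

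The combinatorial core is then a pigeonhole plus an order argument. Since $\mc I$ is finite, the triples $(y_k,u_k,e_k)$, with $y_k$ an object of $\mc I$, $u_k\in\mc I(x,y_k)$ and $e_k\in\mathrm{End}_{\mc J}(Ay_k)$, range over a finite set, so either the process stops with some $e_k=1$ (hence $(b,f')\cong A_{x/}(u_k)$), or there are $m<n$ with $(y_m,u_m,e_m)=(y_n,u_n,e_n)$. Put $\mathfrak w:=v_n\cdots v_{m+1}\in\mathrm{End}_{\mc I}(y_m)$ and $\mathfrak c:=\alpha_n\cdots\alpha_{m+1}$, $\mathfrak d:=\beta_{m+1}\cdots\beta_n$ in $\mathrm{End}_{\mc J}(Ay_m)$; telescoping $\beta_j\alpha_j=1$ gives $\mathfrak d\mathfrak c=1$, so the finite (hence Dedekind-finite) monoid $\mathrm{End}_{\mc J}(Ay_m)$ makes $\mathfrak c$ a unit with inverse $\mathfrak d$, and collapsing the products $Av_{k+1}=\alpha_{k+1}e_k$ using $e_m=e_n$ yields $A\mathfrak w=\mathfrak c e_m=e_m\mathfrak c$, so $\mathfrak c$ commutes with the idempotent $e_m$. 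Taking $N$ to be the multiplicative order of $\mathfrak c$, commutativity and idempotency give $A(\mathfrak w^N)=(\mathfrak c e_m)^N=\mathfrak c^N e_m^N=e_m$; and $\mathfrak w u_m=u_n=u_m$ forces $\mathfrak w^N u_m=u_m$, so $\mathfrak w^N$ is an endomorphism of $(y_m,u_m)$ in $\mc I_{x/}$ mapping to $e_m$. This is the required lift, and running the earlier reduction with $(y,u,e)$ replaced by $(y_m,u_m,e_m)$ finishes the proof. The step I expect to be the main obstacle is precisely this last one: extracting an honest preimage of the idempotent rather than a further retract presentation — the naive iteration of openness only ever reproduces retract presentations, and it is the pigeonhole on the triples $(y_k,u_k,e_k)$ together with the ``kill the unit by raising to its order'' trick that actually closes the loop.
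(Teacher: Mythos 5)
Your proof is correct and follows essentially the same strategy as the paper's: iterate the openness condition, use finiteness of $\mathcal{I}$ to detect a repetition in the resulting sequence, extract from the loop an endomorphism of $\mathcal{I}$ whose image under $A$ is an idempotent splitting to the desired object, and conclude via the finite-semigroup idempotent argument of Lemma~\ref{lem:fullcloseretr} together with Cauchy completeness. The only divergence is bookkeeping: the paper iterates openness on the retractions $r_k : Ax_k \twoheadrightarrow a$, so that the telescoped image is already the idempotent $s r$, whereas your iteration on the idempotents $e_k$ produces the extra unit $\mathfrak{c}$, which you correctly neutralise by raising to its multiplicative order; your write-up is also more explicit than the paper's about tracking the coslice structure over $x$ and about why a repetition must occur.
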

\begin{proof}
  The if direction is trivial. Suppose $A : \mc I \to \mc J$ is open, consider any $f : Ax \to a$. Now by assumption, there exists some $u_1 : x \to x_1$ and a retract pair $s_1 : a \gb Ax_1 : r_1$, inducing the following diagramme,
  \[
    \begin{tikzcd}
      a \\
      Ax & {Ax_1}
      \arrow["f", from=2-1, to=1-1]
      \arrow["{Au_1}"', from=2-1, to=2-2]
      \arrow[shift right, tail, from=1-1, to=2-2]
      \arrow[shift right, two heads, from=2-2, to=1-1]
    \end{tikzcd}
  \]
  Consider the following recursive process:
  \begin{itemize}
  \item If the retract pair $s_1 : a \gb Ax_1 : r_1$ is trivial, viz. an isomorphism, then we are done, since now $f$ is isomorphic to $Au_1$ in $\mc J_{Ax/}$. \vspace{.5ex}
  \item If the retract pair is not trivial, then we may apply the openness condition on $r_1 : Ax_1 \surj a$ again, to obtain another retract pair $s_2 : a \gb Ax_2 : r_2$,
    \[
      \begin{tikzcd}
        a \\
        {Ax_1} & {Ax_2}
        \arrow["r_1", two heads, from=2-1, to=1-1]
        \arrow["{Au_2}"', from=2-1, to=2-2]
        \arrow[shift right, tail, from=1-1, to=2-2]
        \arrow[shift right, two heads, from=2-2, to=1-1]
      \end{tikzcd}
    \]
    Note that $Au_2$ cannot be an isomorphism, because if it is, then by the fact that $s_2r_1 = Au_2$, $r_1$ must be monic; but $r_1$ is already split epic, thus it is monic iff it is an isomoprhism, which by our assumption it isn't. Thus, $Au_2$ is not an isomorphism. Now we may run the algorithm again recursively on $r_2$.
  \end{itemize}
  Notice that if our algorithm generates the following sequence
  \[
    \xymatrix@1{
      Ax_1 \ar[r]^{Au_2} & Ax_2 \ar[r]^{Au_3} & \cdots \ar[r]^{Au_n} & Ax_n \ar[r]^{Au_{n+1}} & \cdots
    }
  \]
  with their corresponding non-trivial retract pairs $s_i : a \gb Ax_i : r_i$ for $i \le n$, none of the composite $Au_{i+j} \circ \cdots \circ Au_{i+1} \circ Au_i$ can be an isomorphism, again due to the fact that $r_i$ cannot be an isomorphism by definition. Thus, either the algorithm stops at some point, which means we have obtained a trivial retract pair, $s_n : a \cong Ax_n : r_n$ for some $n$, and it follows that
  \[ f \cong Au_n \circ \cdots \circ Au_1. \]
  Or the sequence is ultimately periodic, viz. there is a cofinal part of the sequence which can be viewed as constant on some $Av : Ay \to Ay$ generating the same retract pair $s : a \gb Ay : r$, as indicated as follows,
  \[
    \begin{tikzcd}
      a & a \\
      Ay & Ay
      \arrow["s"', shift right, tail, from=1-1, to=2-1]
      \arrow["r"', shift right, two heads, from=2-1, to=1-1]
      \arrow["Av"', from=2-1, to=2-2]
      \arrow[equal, from=1-1, to=1-2]
      \arrow["s"', shift right, tail, from=1-2, to=2-2]
      \arrow["r"', shift right, two heads, from=2-2, to=1-2]
    \end{tikzcd}
  \]
  Now by the same argument in the proof of Lemma~\Ref{lem:fullcloseretr}, we may assume $v$ to be idempotent and split $y$ in $\mc I$ as it is Cauchy complete. Since any functor preserves splitting of idempotencies, it follows that the retract pair $r : Ay \gb a : s$ also lies in the essential image of $A$.
\end{proof}

This result allows us to see more directly why open functors are stable under pullback in $\Catf$ in the next section, which will be crucial for our construction of a $(2,1)$-site on $\Casf\iso$.

\section{Stack Representation}\label{sec:2-topos}

In the previous section we have shown that there are dualty results as follows,
\[
  \begin{tikzcd}
    \mb{HPretopos}_{\mr f}\op \ar[d, tail] \ar[r, "\simeq"] & \Casf \ar[d, tail] \\
    \Pretopos_{\mr f}\op \ar[r, "\simeq"'] & \Catf
  \end{tikzcd}
\]
which is a duality for the full 2-categorical version. By restricting the above 2-categories to their corresopnding $(2,1)$-categories, similarly we have
\[
  \begin{tikzcd}
    \HP\op \ar[d, tail] \ar[r, "\simeq"] & \Casf\iso \ar[d, tail] \\
    \Pt\op \ar[r, "\simeq"'] & \Catf\iso
  \end{tikzcd}
\]

As mentioned in Section~\Ref{sec:intro}, our task in this section is to first put a suitable Grothendieck topology on $\Casf\iso$, the $(2,1)$-category of finite Cauchy complete categories with open functors, so as to realise it as a $(2,1)$-site. Notice that as a $(2,1)$-category, $\Casf\iso$ may not admit all finite 2-limits, but $\Catf\iso$ does. Even if some limit does exist in $\Casf\iso$, the inclusion $\Casf\iso\inj\Catf\iso$ not necessarily preserves it. In the following texts, whenever we are taking finite 2-limits of finite Cauchy complete categories, we are always taking the 2-limits in $\Catf\iso$, even though the 2-limit might exists in $\Casf\iso$.

However, 2-colimits in $\Catf\iso$ \emph{do} coincide with that in $\Casf\iso$, because they are dual to $\Pt$ and $\HP$ respectively, and the two $(2,1)$-categories compute the same finite 2-limits. This is due to the fact that $\mb{HPretopos}$ is \emph{monadic} over $\Pretopos$.

By following a logical intuition, we will be thinking about $\Casf\iso$ as the $(2,1)$-category of \emph{finite Kripke frames}. Our task in Section~\Ref{subsec:sitekripke} is to construct a Grothendieck topology $J$ on $\Casf\iso$, that in some sense fully reflects its underlying logical information. In Section~\Ref{subsec:stackrepresent} we will first construct the stack representation functor
\[ \Phi_- : \HPfp\op \to \St(\Casf\iso,J), \]
where it intuitively sends each finitely presented Heyting pretopos to the stack of its \emph{Kripke models}.

We think of the above functor as \emph{gros representation}, which embeds the opposite of the total $(2,1)$-category of finitely presented Heyting pretoposes in $\St(\Casf\iso,J)$. Any such representation also induces a \emph{petit representation}, which embeds a particular Heyting pretopos $\mc H$ in $\St(\Casf\iso,J)$ via taking slices. Concretely, this is obtained via first embed $\mc H$ as
\[ \mc H \to \mc H/\HPfp\op, \]
by sending any $\varphi\in\mc H$ to the pullback onto the slice over it $\mc H \to \mc H/\varphi$, and then compose with the gros representation to get a functor
\[ \mc H \to \St(\Casf\iso,J)/\Phi_{\mc H}. \]
We will show that this functor actually lands in \emph{discrete opfibrations} over $\Phi_{\mc H}$, and will be a \emph{faithful Heyting functor} from $\mc H$ to the discrete opfibration slice over $\Phi_{\mc H}$. These will be developed in Section~\Ref{subsec:grospetit}.

\subsection{The $(2,1)$-Site of Finite Kripke Frames}\label{subsec:sitekripke}

For a general discussion on 2-categorical notions of sites and stacks, we refer the readers to~\cite{street1982twosheaf}. The Grothendieck topology $J$ we put on $\Casf\iso$ is obtained by choosing a specific family of covers. For us, the relevant notion lies in \emph{open surjections}. For any $\mc P\in\Casf\iso$ and any family of maps $\set{A_i : \mc Q_i \to \mc P}$ in $\Casf\iso$, we say it is a \emph{$J$-cover} if it contains a \emph{finite} family which is \emph{jointly surjective}. Since we work in the $(2,1)$-category $\Casf\iso$, this is equivalent to the existence of a finite set $I$ such that the induced coproduct
\[ [A_i]_{i\in I} : \coprod_{i\in I}\mc Q_i \surj \mc P \]
is an open surjection. Notice that it is necessarily open, thus the condition solely lies in surjectivity. 

To see this gives a well-defined Grothendieck coverage $J$ on $\Casf\iso$, we first establish certain stability result. Our first observation is that, due to our characterisation of open maps in Proposition~\ref{prop:upwardesssurj}, open maps are indeed stable under 2-pullback in $\Catf\iso$:

\begin{lemma}\label{lem:openstab}
  In $\Catf\iso$, if $A : \mc Q \to \mc P$ is open, then its pullback along any $B : \mc R \to \mc P$ will again be open,
  \[
    \xymatrix{
      \mc Q \times_{\mc P} \mc R \ar@{}[dr]|{\cong} \ar[r]^D \ar[d]_C & \mc Q \ar[d]^A \\
      \mc R \ar[r]_B & \mc P
    }
  \]
\end{lemma}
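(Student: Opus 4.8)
The plan is to reduce everything to Proposition~\ref{prop:upwardesssurj}: it suffices to show that the pullback projection $C : \mc Q\times_{\mc P}\mc R \to \mc R$ is \emph{upwards stably surjective}, i.e.\ that for every object $z$ of $\mc Q\times_{\mc P}\mc R$ the induced functor $C_{z/} : (\mc Q\times_{\mc P}\mc R)_{z/} \to \mc R_{Cz/}$ is essentially surjective. Since we work in the $(2,1)$-category $\Catf\iso$, the 2-pullback is computed as an iso-comma object, so an object $z$ is a triple $(q,r,\phi)$ with $q\in\mc Q$, $r\in\mc R$ and $\phi : Aq \to Br$ an isomorphism in $\mc P$, and $Cz = r$.

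The key step is to identify the coslice of the 2-pullback with a 2-pullback of coslices. Unwinding the description of morphisms in $\mc Q\times_{\mc P}\mc R$, one checks that there is an equivalence
\[ (\mc Q\times_{\mc P}\mc R)_{z/} \;\simeq\; \mc Q_{q/} \times_{\mc P_{Br/}} \mc R_{r/}, \]
where the right-hand cospan has legs $\phi_{*}\circ A_{q/} : \mc Q_{q/} \to \mc P_{Br/}$ and $B_{r/} : \mc R_{r/}\to\mc P_{Br/}$, with $\phi_{*} : \mc P_{Aq/}\to\mc P_{Br/}$ the canonical equivalence of coslices induced by the isomorphism $\phi$. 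Under this equivalence, $C_{z/}$ corresponds to the projection onto $\mc R_{r/}$. Verifying this amounts to matching, on both sides, an object lying over $z$ with a triple $(u : q\to q',\ v : r\to r',\ \phi' : Aq'\to Br')$ where $\phi'$ is an isomorphism satisfying $\phi'\circ Au = Bv\circ\phi$; this is routine but needs some care with the coherence isomorphism $\phi$.

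Granting this identification, the conclusion is immediate. Since $A$ is open, Proposition~\ref{prop:upwardesssurj} gives that $A_{q/}$ is essentially surjective, hence so is $\phi_{*}\circ A_{q/}$, being the composite of an essentially surjective functor with an equivalence. Essentially surjective functors are stable under 2-pullback (as already noted in Section~\ref{subsec:surjection}), so the projection $\mc Q_{q/}\times_{\mc P_{Br/}}\mc R_{r/} \to \mc R_{r/}$, that is $C_{z/}$, is essentially surjective. As $z$ was arbitrary, $C$ is upwards stably surjective, hence open by Proposition~\ref{prop:upwardesssurj}. (The symmetric argument shows $D$ is open whenever $B$ is, though this is not needed for the statement.)

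The main obstacle is precisely the middle step, namely setting up the equivalence $(\mc Q\times_{\mc P}\mc R)_{z/}\simeq \mc Q_{q/}\times_{\mc P_{Br/}}\mc R_{r/}$ and checking its compatibility with the projections; once the coslice of a $(2,1)$-pullback is understood in these terms, the rest is a formal consequence of Proposition~\ref{prop:upwardesssurj} together with the stability of essential surjectivity under 2-pullback, so there is nothing genuinely difficult beyond the $(2,1)$-categorical bookkeeping.
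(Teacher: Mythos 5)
Your proposal is correct and follows essentially the same route as the paper: both reduce openness to upwards stable surjectivity via Proposition~\ref{prop:upwardesssurj}, identify the coslice of the $2$-pullback at $(q,r,\phi)$ with a $2$-pullback of coslices, and then invoke stability of essential surjectivity under $2$-pullback. Your version is slightly more explicit about the coherence isomorphism $\phi$ (twisting one leg by $\phi_{*}$ rather than choosing a common object $p$ as the paper does), but this is only a presentational difference.
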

\begin{proof}
  Recall by Proposition~\ref{prop:upwardesssurj}, a morphism is open iff it is upwards stably surjective, thus the results will follow from a computation of slices. For any $(q,r,\alpha)\in\mc Q \times_{\mc P} \mc R$ with $\alpha : Aq \cong Br$, let $p$ be some element in $\mc P$ isomorphic to both $Aq$ and $Br$. This way, we would also have a 2-pullback square on slices,
  \[
    \xymatrix{
      (\mc Q \times_{\mc P} \mc R)_{(q,r)/} \ar@{}[dr]|{\cong} \ar[r]^D \ar@{->>}[d]_C & \mc Q_{q/} \ar@{->>}[d]^A \\
      \mc R_{r/} \ar[r]_B & \mc P_{p/}
    }
  \]
  By $A$ being open, we know that $\mc Q_{q/} \surj \mc P_{p/}$ is stably surjective, thus so is its pullback. This implies $C$ is open as well.
\end{proof}

\begin{remark}
  In topos theory, it is true more generally that open geometric morphisms are stable under pullback along bounded geometric morphisms, e.g. see Chapter C3.1 in~\cite{johnstone2002sketches}. In the finitary case of our interest, as shown in Lemma~\Ref{lem:openstab} this stability result can be seen as a direct consequence of the combinatorial properties of open maps established in Proposition~\ref{prop:upwardesssurj}. In contrast, for a general functor $F : \mc C \to \mc D$ which is open, its pullback along any other functor $G : \mc B \to \mc D$ computed in $\Cat$ is by no means necessarily still open.
\end{remark}

As an easy consequence, so is open surjections:

\begin{lemma}\label{lem:opensurjstab}
  A functor in $\Catf\iso$ is an open surjection iff it is stably surjective and upwards stably surjective, thus in particular open surjections are also stable under 2-pullbacks.
\end{lemma}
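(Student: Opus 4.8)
The plan is to unwind all the terminology using characterisations already in hand. By the lemma identifying stably surjective functors with the essentially surjective ones, and by Proposition~\ref{prop:upwardesssurj} identifying open functors with the upwards stably surjective ones, the asserted equivalence amounts to the statement that a functor $A : \mc I \to \mc J$ in $\Catf\iso$ is an open surjection if and only if it is open and essentially surjective. So I would just prove that reformulation.

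For the ``if'' direction I would note that an essentially surjective functor is a fortiori essentially surjective up to retracts, hence a surjection by Lemma~\ref{lem:surjessretra}; combined with the openness hypothesis this exhibits $A$ as an open surjection.

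For the ``only if'' direction, openness is immediate, and the work is to upgrade surjectivity to genuine essential surjectivity. Given $a \in \mc J$, Lemma~\ref{lem:surjessretra} supplies some $x \in \mc I$ together with a retract pair $s : a \to Ax$, $r : Ax \to a$ with $rs = 1_a$. I would then regard $r$ as an object of the slice $\mc J_{Ax/}$ and apply Proposition~\ref{prop:upwardesssurj} to the open functor $A_{x/} : \mc I_{x/} \to \mc J_{Ax/}$: this yields $u : x \to y$ in $\mc I$ with $Au \cong r$ in $\mc J_{Ax/}$, and in particular $a \cong Ay$ in $\mc J$. Hence $A$ is essentially surjective, so $A$ is open and essentially surjective as required.

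Finally, for the stability claim: open maps are stable under $2$-pullback in $\Catf\iso$ by Lemma~\ref{lem:openstab}, and essentially surjective (equivalently, stably surjective) functors are stable by definition; since a $2$-pullback of a functor satisfying both conditions again satisfies both, open surjections are stable under $2$-pullback. The only mildly delicate point in the whole argument is the upgrade in the ``only if'' direction: one must resist concluding essential surjectivity from surjectivity alone, since the latter only provides a retract $Ax \twoheadrightarrow a$, and it is precisely openness, in the strong form of Proposition~\ref{prop:upwardesssurj}, that promotes this retract to an honest isomorphism with an object in the image of $A$. Everything else is bookkeeping over the characterisations already established.
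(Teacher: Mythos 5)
Your proof is correct and follows essentially the same route as the paper's: given a retract pair $s : a \rightleftarrows Ax : r$ supplied by surjectivity, you view $r$ as an object of $\mc J_{Ax/}$ and use openness in the form of Proposition~\ref{prop:upwardesssurj} to find $u : x \to y$ with $Au \cong r$, whence $a \cong Ay$. The only difference is that you spell out the easy ``if'' direction and the stability conclusion explicitly, which the paper leaves implicit.
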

\begin{proof}
  We only need to show any open surjection is stably surjective, viz. essentially surjective. Suppose $A : \mc I \to \mc J$ is an open surjection, then for any $a\in\mc J$, by $A$ being surjective we can find a retract pair $s : a \gb Ax : r$. Now by $A$ being open, the functor
  \[ A_{x/} : \mc I_{x/} \surj \mc J_{Ax/} \]
  is essentially surjective, thus we can find $u : x \to y$ in $\mc I_{x/}$ such that $Au \cong r$ in $\mc J_{Ax/}$. In particular, this implies $a \cong Ay$, thus $A$ is essentially surjective.
\end{proof}

With such a stability result, we can now easily see that $J$ is a Grothendieck coverage on the $(2,1)$-category $\Casf\iso$:

\begin{lemma}
  $J$ is a Grothendieck coverage on the $(2,1)$-category $\Casf\iso$.
\end{lemma}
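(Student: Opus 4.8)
The plan is to verify the axioms of a Grothendieck coverage on $\Casf\iso$: that equivalences generate $J$-covers, that $J$-covers are stable under $2$-pullback, and that $J$-covers are closed under composition with $J$-covers of their members. Throughout I use the reformulation recorded just above the statement, namely that $\set{A_i : \mc Q_i \to \mc P}$ is a $J$-cover exactly when some finite subfamily indexed by $I$ makes $[A_i]_{i\in I} : \coprod_{i\in I}\mc Q_i \to \mc P$ surjective, this copairing being automatically open. I also use that $\Casf\iso$ is closed under finite coproducts (a finite coproduct of finite Cauchy complete categories is finite Cauchy complete, and each coprojection is an isomorphism on slices, hence open), that a copairing of open functors is open (at $x\in\mc Q_i$ the slice $(\coprod_j\mc Q_j)_{x/}$ coincides with $(\mc Q_i)_{x/}$, so $([A_i])_{x/}=(A_i)_{x/}$ is essentially surjective and Proposition~\ref{prop:upwardesssurj} applies), and that composites of open functors are open, again by Proposition~\ref{prop:upwardesssurj}. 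The identity axiom is then immediate: $\set{1_{\mc P}}$ is a finite surjective family, and more generally any equivalence is an open surjection, hence a singleton $J$-cover.

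For $2$-pullback stability, let $\set{A_i : \mc Q_i \to \mc P}$ be a $J$-cover with finite surjective subfamily $I$, and let $B : \mc R \to \mc P$ be arbitrary in $\Casf\iso$. Form the $2$-pullbacks in $\Catf\iso$, which has all finite $2$-limits:
\[
  \xymatrix{
    \mc Q_i\times_{\mc P}\mc R \ar@{}[dr]|{\cong} \ar[r]^{D_i} \ar[d]_{C_i} & \mc Q_i \ar[d]^{A_i} \\
    \mc R \ar[r]_{B} & \mc P
  }
\]
Since $A_i$ and $B$ are open, Lemma~\ref{lem:openstab} makes $C_i$ and $D_i$ open, so the whole square lies in $\Casf\iso$. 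As finite coproducts commute with $2$-pullbacks in $\Catf\iso$, we obtain $\coprod_{i\in I}(\mc Q_i\times_{\mc P}\mc R)\simeq(\coprod_{i\in I}\mc Q_i)\times_{\mc P}\mc R$, under which $[C_i]_{i\in I}$ is identified with the $2$-pullback of the open surjection $[A_i]_{i\in I}:\coprod_{i\in I}\mc Q_i\surj\mc P$ along $B$. By Lemma~\ref{lem:opensurjstab} open surjections are stable under $2$-pullback, so $[C_i]_{i\in I}$ is a surjection; hence $\set{C_i}_{i\in I}$ is a $J$-cover of $\mc R$, and by construction each $B\circ C_i\simeq A_i\circ D_i$ factors through $A_i$. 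This verifies the coverage axiom in its strong, pullback form.

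For composition, suppose $\set{A_i:\mc Q_i\to\mc P}_{i\in I}$ is a finite surjective family and each $\set{B_{ij}:\mc R_{ij}\to\mc Q_i}_{j\in J_i}$ is a $J$-cover, with each $J_i$ finite and surjective. For each $i$, $[B_{ij}]_{j\in J_i}:\coprod_{j}\mc R_{ij}\surj\mc Q_i$ is surjective; a finite coproduct of surjective functors is surjective, since essential surjectivity is preserved by coproducts, so $[B_{ij}]_{i,j}:\coprod_{i,j}\mc R_{ij}\to\coprod_i\mc Q_i$ is surjective, and composing with the surjection $[A_i]_{i\in I}$ shows $[A_iB_{ij}]_{i,j}:\coprod_{i,j}\mc R_{ij}\to\mc P$ is surjective. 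This copairing is open, being a composite of copairings of open functors, so the finite family $\set{A_iB_{ij}}$ is a $J$-cover of $\mc P$.

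The substantive inputs are Lemmas~\ref{lem:openstab} and~\ref{lem:opensurjstab}, which themselves rest on the combinatorial analysis of open maps in Proposition~\ref{prop:upwardesssurj}; once those are in hand the verification is bookkeeping. The one point requiring care is that all the $2$-pullbacks are to be formed in $\Catf\iso$ yet must remain inside the subcategory $\Casf\iso$ — which is exactly what Lemma~\ref{lem:openstab} guarantees — so that $J$ genuinely is a coverage on $\Casf\iso$ and not merely on $\Catf\iso$.
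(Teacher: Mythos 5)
Your proof is correct and follows essentially the same route as the paper's: both reduce the covering conditions to statements about the single copairing $[A_i]$ out of a finite coproduct, and both rest on Lemma~\ref{lem:openstab} and Lemma~\ref{lem:opensurjstab} for pullback stability, with transitivity following from closure of open surjections under composition. You simply spell out in more detail the points the paper leaves implicit (openness of copairings, and the identification of the coproduct of pullbacks with the pullback of the coproduct via disjointness of coproducts in $\Catf\iso$).
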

\begin{proof}
  Any equivalence is evidently an open surjection. For any $J$-covering family on $\mc P$, suppose it contains a jointly surjective family $\set{A_i : \mc Q_i \to \mc P}_{i\in I}$. For any $B : \mc R \to \mc P$, the family of pullbacks $\set{B^{*}A_i : \mc Q_i \times_{\mc P}\mc R \to \mc R}_{i\in I}$ by Lemma~\ref{lem:opensurjstab} is again jointly surjective, because coproducts in the $(2,1)$-category $\Catf$ are disjoint. Thus, $J$ satisfies stability. Transitivity is evident from the fact that open surjections are closed under composition.
\end{proof}

This way, we have constructed a $(2,1)$-site $(\Casf\iso,J)$. Before studying the $(2,1)$-topos generated by this $(2,1)$-site, we observe that in practice it is often easier to work with a \emph{dense subsite} of it. Let $\Casfp\iso$ be the full sub 2-category of $\Casf\iso$ consisting of those finite Cauchy complete categories which has an \emph{initial object}. If $\mc P \in \Casfp\iso$ has an initial object, we will also call it \emph{rooted}, and use $*$ to denote the root, or the initial object, of $P$.

We show that objects in $\Casfp\iso$ are \emph{regular} for the site $(\Casf\iso,J)$:

\begin{lemma}\label{lem:rootedsurj}
  If $\mc P\in\Casfp\iso$ is rooted, then $A : \mc Q \to \mc P$ in $\Casf\iso$ is an open surjection iff $*\in\mc P$ lies in the essential image of $A$.
\end{lemma}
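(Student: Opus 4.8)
The plan is to first reduce the claim to a statement purely about essential surjectivity, and then exploit the initial object of $\mc P$. Since we work in $\Casf\iso$, the functor $A : \mc Q \to \mc P$ is already open, so by Lemma~\ref{lem:opensurjstab} (together with the identification of stable surjectivity with essential surjectivity established just before it) $A$ is an open surjection if and only if it is essentially surjective. Thus it suffices to show that an open functor $A$ into a rooted $\mc P$ is essentially surjective precisely when the root $*\in\mc P$ lies in its essential image.

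The forward implication is immediate: an essentially surjective functor has everything, in particular $*$, in its essential image. For the converse, suppose $x\in\mc Q$ satisfies $Ax \cong *$, and fix an arbitrary $p\in\mc P$. Since $*$ is initial in $\mc P$, there is a (unique) morphism $*\to p$, which under the isomorphism $Ax\cong *$ we regard as an object $f : Ax \to p$ of the slice $\mc P_{Ax/}$. By Proposition~\ref{prop:upwardesssurj}, openness of $A$ says exactly that $A_{x/} : \mc Q_{x/} \to \mc P_{Ax/}$ is essentially surjective, so $f$ is isomorphic in $\mc P_{Ax/}$ to $Au$ for some $u : x \to y$ in $\mc Q$; in particular $p \cong Ay$. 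As $p$ was arbitrary, $A$ is essentially surjective, which by the reduction above makes it an open surjection.

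There is no real obstacle here: the only point requiring a little care is the bookkeeping around the root, namely observing that ``$*$ lies in the essential image of $A$'' furnishes, for every $p$, a morphism out of an object that maps (up to isomorphism) to $*$, and that this is precisely the datum to which upward essential surjectivity of $A$ applies. All the substantive work — stability of open maps under $2$-pullback and the combinatorial characterisation of openness as upward stable surjectivity — has already been carried out in Lemma~\ref{lem:openstab}, Lemma~\ref{lem:opensurjstab}, and Proposition~\ref{prop:upwardesssurj}.
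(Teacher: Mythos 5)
Your proof is correct and follows essentially the same route as the paper's: both directions rest on Lemma~\ref{lem:opensurjstab} (open surjections are essentially surjective) and on Proposition~\ref{prop:upwardesssurj} applied at a preimage of the root, where the paper simply notes $\mc P_{*/}\cong\mc P$ while you spell out the same fact pointwise via the unique maps out of $*$. No gaps.
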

\begin{proof}
  By Lemma~\ref{lem:opensurjstab}, an open surjection $A : \mc Q \to \mc P$ must be essentially surjective, thus $*$ does lie in the essential image of $A$. On the other hand, if there exists $q\in\mc Q$ with $Aq \cong *$, then since $A$ is open, we have a stable surjection
  \[ A_{q/} : \mc Q_{q/} \surj \mc P_{*/} \cong \mc P \]
  This particularly implies that $A : \mc Q \surj \mc P$ is also essentially surjective, thus $A$ is an open surjection.
\end{proof}

\begin{corollary}
  If $\mc P\in\Casfp\iso$ is rooted, then any covering family on $\mc P$ contains a single open surjection.
\end{corollary}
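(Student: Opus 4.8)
The plan is to unwind the definition of a $J$-cover and then apply Lemma~\ref{lem:rootedsurj} to the single map whose essential image contains the root. By construction of the coverage $J$, a covering family on $\mc P$ contains a \emph{finite} jointly surjective subfamily $\set{A_i : \mc Q_i \to \mc P}_{i\in I}$, meaning that the induced functor $[A_i]_{i\in I} : \coprod_{i\in I}\mc Q_i \to \mc P$ is an open surjection. By Lemma~\ref{lem:opensurjstab} this functor is in particular essentially surjective, so every object of $\mc P$ — and in particular the root $*$ — lies in its essential image.

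Next I would observe that the essential image of a coproduct $[A_i]_{i\in I}$ is the union of the essential images of the individual $A_i$ (since an object of $\coprod_{i\in I}\mc Q_i$ lies in exactly one summand $\mc Q_i$, and coproducts in $\Catf$ are disjoint). Hence there is some index $i_0\in I$ with $*\in\mc P$ in the essential image of $A_{i_0}$.

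The final step is to invoke Lemma~\ref{lem:rootedsurj}: since $\mc P$ is rooted and $*$ lies in the essential image of $A_{i_0}$, the functor $A_{i_0} : \mc Q_{i_0} \to \mc P$ is itself an open surjection. Therefore the covering family contains the single open surjection $A_{i_0}$, as desired.

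I do not expect any real obstacle here — the statement is a direct corollary of Lemma~\ref{lem:rootedsurj} together with the unwinding of the definition of $J$; the only mild point to be careful about is the remark that the essential image of a coproduct functor decomposes as the union of the essential images of its components, which uses disjointness of coproducts in $\Catf$.
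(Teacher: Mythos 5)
Your proof is correct and follows essentially the same route as the paper: extract the finite jointly surjective subfamily, note that the root lies in the essential image of the coproduct functor and hence of some single $A_{i_0}$, and conclude by Lemma~\ref{lem:rootedsurj}. The only difference is that you make explicit two steps the paper leaves implicit (citing Lemma~\ref{lem:opensurjstab} for essential surjectivity of the joint map, and the decomposition of the essential image over the disjoint coproduct), which is fine.
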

\begin{proof}
  Consider any finite family $\set{A_i : \mc Q_i \to \mc P}$ which is jointly surjective,
  \[ [A_i] : \coprod\mc Q_i \surj \mc P. \]
  Now since $*\in\mc P$ lies in the essential image, there must exist some $i$ that $*\in\mc P$ lies in the essential image of $A_i : \mc Q_i \to \mc P$. By Lemma~\ref{lem:rootedsurj}, $A_i$ is already an open surjection.
\end{proof}

Furthermore, any object in $\Casf\iso$ can be covered by these regular objects:

\begin{lemma}\label{lem:rooteddens}
  Any $\mc I\in\Casf\iso$ admits a $J$-covering family $\set{\mc P_i \to \mc I}_{i\in I}$ where each $\mc P_i$ is rooted.
\end{lemma}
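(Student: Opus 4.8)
The plan is to cover $\mc I$ by its coslice categories. For each object $x\in\mc I$ let $\mc I_{x/}$ denote the coslice, whose objects are pairs $(y,f)$ with $f : x\to y$ in $\mc I$ and whose morphisms $(y,f)\to(y',f')$ are maps $h : y\to y'$ with $hf=f'$, and let $U_x : \mc I_{x/}\to\mc I$, $(y,f)\mapsto y$, be the forgetful functor. I claim the family $\set{U_x : \mc I_{x/}\to\mc I}_{x\in\mc I}$ is a $J$-covering family all of whose domains are rooted.

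First I would check that each $\mc I_{x/}$ is an object of $\Casfp\iso$. It has finitely many objects and arrows since $\mc I$ does, and it is rooted: the object $(x,1_x)$ is plainly initial, the unique morphism to $(y,f)$ being $f$ itself. For Cauchy completeness, given an idempotent $e : (y,f)\to(y,f)$ in $\mc I_{x/}$ — equivalently $e^2=e$ and $ef=f$ in $\mc I$ — split $e=sr$ in $\mc I$ with $r : y\to z$, $s : z\to y$ and $rs=1_z$, using that $\mc I$ is Cauchy complete. Then $(z,rf)$ together with $r : (y,f)\to(z,rf)$ and $s : (z,rf)\to(y,f)$ (a morphism of $\mc I_{x/}$ because $s\cdot rf=(sr)f=ef=f$) is a splitting of $e$ in $\mc I_{x/}$. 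Hence $\mc I_{x/}\in\Casfp\iso$.

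Next I would verify that $U_x$ is open, i.e. upwards surjective in the sense of Proposition~\ref{prop:upwardesssurj}. For $(y,f)\in\mc I_{x/}$, an object of $(\mc I_{x/})_{(y,f)/}$ is a triple $(z,\,g : x\to z,\,h : y\to z)$ with $hf=g$; since $g$ is forced to equal $hf$, forgetting $g$ identifies $(\mc I_{x/})_{(y,f)/}$ with $\mc I_{y/}=\mc I_{U_x(y,f)/}$ compatibly with the induced functor on coslices, which is therefore an isomorphism of categories, in particular essentially surjective, hence a surjection. So $U_x$ is open, and thus $U_x\in\Casf\iso$. (Equivalently, $U_x$ is the discrete opfibration attached to the presheaf $\mc I(x,-)$.)

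Finally the family $\set{U_x}_{x\in\mc I}$ is finite, since $\mc I$ has finitely many objects, and jointly surjective, since $x=U_x(x,1_x)$ for every $x\in\mc I$; hence it is a $J$-cover of $\mc I$ by rooted objects, as required. The argument is essentially routine; the only two points that warrant a moment's care are the Cauchy completeness of the coslice, which reduces to splitting an idempotent in $\mc I$, and the openness of $U_x$, which comes straight out of the computation of its coslices.
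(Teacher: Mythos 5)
Your proposal is correct and follows exactly the paper's route: the paper also covers $\mc I$ by the forgetful functors $\mc I_{x/}\to\mc I$ and simply asserts that these are open and jointly covering. You have merely filled in the details the paper leaves implicit (Cauchy completeness and rootedness of the coslices, and openness of $U_x$ via the identification $(\mc I_{x/})_{(y,f)/}\cong\mc I_{y/}$), and those verifications are all sound.
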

\begin{proof}
  For any $x\in\mc I$, consider the functor $\mc I_{x/} \to \mc I$. All such maps are open, and the finite family $\set{\mc I_{x/} \to \mc I}_{x\in\mc I}$ evidently is a $J$-covering family. 
\end{proof}

\begin{corollary}
  The inclusion $(\Casfp\iso,J) \hook (\Casf\iso,J)$ induces an equivalence on the $(2,1)$-toposes of stacks over them,
  \[ \St(\Casfp\iso,J) \simeq \St(\Casf\iso,J). \]
\end{corollary}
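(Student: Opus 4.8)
The plan is to invoke the $(2,1)$-categorical \emph{comparison lemma} for dense subsites. Recall (see~\cite{street1982twosheaf} for the relevant two-dimensional sheaf theory, and C2.2 of~\cite{johnstone2002sketches} for the $1$-categorical template) that if $(\mc C, K)$ is a $(2,1)$-site and $\mc D \hook \mc C$ is a \emph{full} sub-$(2,1)$-category which is $K$-\emph{dense}, in the sense that every object of $\mc C$ admits a $K$-covering family by objects of $\mc D$, then, equipping $\mc D$ with the induced topology $K|_{\mc D}$ (a sieve on $D\in\mc D$ is $K|_{\mc D}$-covering iff the sieve it generates in $\mc C$ is $K$-covering), restriction along $\mc D \hook \mc C$ induces an equivalence $\St(\mc C, K) \simeq \St(\mc D, K|_{\mc D})$. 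It therefore suffices to check that $\Casfp\iso \hook \Casf\iso$ is such a dense subsite and that the coverage $J$ on $\Casfp\iso$ appearing in the statement is exactly the induced one.

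Fullness of the inclusion holds by construction of $\Casfp\iso$. Density is precisely Lemma~\ref{lem:rooteddens}: for any $\mc I\in\Casf\iso$ the finite family $\set{\mc I_{x/} \to \mc I}_{x\in\mc I}$ is jointly surjective, hence a $J$-cover, and each coslice $\mc I_{x/}$ is finite, Cauchy complete and rooted (with root $1_x$), so lies in $\Casfp\iso$. It remains to identify the induced topology. Let $S$ be a sieve on a rooted $\mc P$ whose generated sieve in $\Casf\iso$ is $J$-covering, so $S$ contains a finite jointly surjective family $\set{A_i : \mc Q_i \to \mc P}_i$. Each $A_i$ factors as $\mc Q_i \to \mc P_{j_i} \to \mc P$ through some member $\mc P_{j_i}\to\mc P$ of $S$ with $\mc P_{j_i}$ rooted, and then the essential image of $\mc P_{j_i}\to\mc P$ contains that of $A_i$; since $\set{A_i}_i$ is jointly surjective, so is $\set{\mc P_{j_i}\to\mc P}_i$, and this is a finite family inside $S$ between rooted objects. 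Conversely any such family generates a $J$-covering sieve. Hence $J|_{\Casfp\iso}$ coincides with the coverage of finite jointly surjective families on $\Casfp\iso$, i.e. with the $J$ in the statement; by the corollary following Lemma~\ref{lem:rootedsurj} one may even take such a family to be a single open surjection. Applying the comparison lemma then yields $\St(\Casfp\iso,J) \simeq \St(\Casf\iso,J)$.

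The one point that genuinely requires care — hence what I would flag as the main obstacle — is the passage to the comparison lemma at the $(2,1)$-level: one must verify that the standard argument with covering sieves, refinement, and descent carries over verbatim for groupoid-valued stacks, and in particular that the dense-subsite formulation demands no closure of $\Casfp\iso$ under the $2$-pullbacks used to phrase the coverage $J$ — which matters here, since $\Casfp\iso$, like $\Casf\iso$, need not be closed under finite $2$-limits inside $\Catf\iso$. Granting this, the rest is just Lemma~\ref{lem:rooteddens} together with the elementary bookkeeping of jointly surjective families above.
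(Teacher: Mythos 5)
Your proposal is correct and follows essentially the same route as the paper: density of the rooted objects via Lemma~\ref{lem:rooteddens} followed by the $(2,1)$-categorical comparison lemma from~\cite{street1982twosheaf} (the paper cites its Theorem 3.8 directly). The extra bookkeeping you supply on identifying the induced topology is a reasonable elaboration of what the paper leaves implicit, not a different argument.
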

\begin{proof}
  Lemma~\Ref{lem:rooteddens} implies $\Casfp$ is a $J$-dense full subcategory of $\Casf$, and the desired result follows from a comparison lemma for 2-toposes, e.g. see Theorem 3.8 in~\cite{street1982twosheaf}.
\end{proof}

Thus, we can always work with the site $(\Casfp\iso,J)$, where now for any rooted $\mc P$, a family is $J$-covering iff it contains a single open surjection. This means that $J$ is a \emph{regular} topology on $\Casfp$, which helps us greatly to compute 2-colimits of stacks.

\subsection{Stack Representation of Heyting Pretoposes}\label{subsec:stackrepresent}

Now for a general Heyting pretopos $\mc H$, we know that they are not complete w.r.t. classical models. For our focus of finitely presented Heyting pretopos, we will consider its models in all \emph{finite Kripke frames}, i.e. models valued in the Heyting pretoposes of the form $\Fin[\mc P]$, for $\mc P\in\Casfp\iso$. In particular, \emph{any} Heyting pretopos $\mc H$ induces a prestack $\Phi_{\mc H}$ on the $(2,1)$-category $\Casfp\iso$ as follows:
\[ \Phi_{\mc H} := \HPt(\mc H,\Fin[-]), \]
where $\HPt$ is the $(2,1)$-category of Heyting pretoposes. In particular, $\Phi_{\mc H}$ is groupoid valued. The action of $\Phi_{\mc H}$ on $A : \mc Q \to \mc P$ in $\Casfp\iso$ is given by post-composition with the inverse image $A^{*} : \Fin[\mc P] \to \Fin[\mc Q]$. For any finite Kripke frame $\mc P\in\Casfp\iso$, we think of $\Phi_{\mc H}(\mc P)$ as the \emph{groupoid of models} of $\mc H$ on the $\mc P$, or simply \emph{$\mc P$-models} of $\mc H$. In particular, the pre-stack records the information of all \emph{finite} models of $\mc H$ on \emph{finite} Kripke frames.

Our first task then is to show that $\Phi_{\mc H}$ will indeed be a \emph{stack} for the topology $J$ we have chosen on $\Casfp\iso$, and this in fact can be seen as a consequence of the descent theorem for toposes. 

\begin{lemma}\label{lem:modelstack}
  For any Heyting pretopos $\mc H$, $\Phi_{\mc H}$ is a stack of groupoids on the $(2,1)$-site $(\Casfp\iso,J)$.
\end{lemma}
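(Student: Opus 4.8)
The plan is to reduce the statement to the descent theorem for toposes, transported across the duality $\Casf\iso \simeq \HP\op$ established in Theorem~\ref{thm:dualfinhp} (restricted to the underlying $(2,1)$-categories). First I would recall what it means for $\Phi_{\mc H}$ to be a $J$-stack on the site $(\Casfp\iso, J)$. Using the corollary that the inclusion $(\Casfp\iso, J) \hook (\Casf\iso, J)$ induces an equivalence on stack $(2,1)$-toposes, and that for a rooted $\mc P$ a $J$-cover contains a single open surjection, the stack condition becomes: for every rooted $\mc P$ and every open surjection $A : \mc Q \surj \mc P$ in $\Casf\iso$, the canonical comparison functor
\[ \Phi_{\mc H}(\mc P) \to \mr{Desc}(A), \]
from $\Phi_{\mc H}(\mc P) = \HPt(\mc H, \Fin[\mc P])$ to the groupoid of descent data along the (truncated) \v{C}ech nerve of $A$, is an equivalence of groupoids. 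I would spell out the \v{C}ech nerve: the relevant objects are $\mc Q$, the pullback $\mc Q \times_{\mc P} \mc Q$, and $\mc Q \times_{\mc P} \mc Q \times_{\mc P} \mc Q$, all of which, by Lemma~\ref{lem:openstab}, lie in $\Casf\iso$ and thus have the form $\Fin[-]$; dually, under $\Fin[-]$ these pullbacks become pushouts in $\HP$, i.e. $\Fin[\mc Q \times_{\mc P} \mc Q] \simeq \Fin[\mc Q] \otimes_{\Fin[\mc P]} \Fin[\mc Q]$ (tensor = $2$-pushout of Heyting pretoposes). So a descent datum is precisely a model $M : \mc H \to \Fin[\mc Q]$ together with a coherent isomorphism between its two restrictions to $\Fin[\mc Q \times_{\mc P} \mc Q]$ satisfying the cocycle condition on the triple pullback.

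The key step is then to invoke Moerdijk's descent theorem (equivalently, the Bénabou–Roubaud / monadic descent theorem, as in Johnstone C5.1): an open surjection — in fact any effective descent morphism, and open surjections are effective descent — exhibits $\Fin[\mc P]$ as the $(2,1)$-limit (lax equalizer/totalization) of the cosimplicial diagram of toposes $\Fin[\mc Q] \rightrightarrows \Fin[\mc Q \times_{\mc P} \mc Q] \substack{\to\\\to\\\to} \cdots$, meaning $\Fin[\mc P] \simeq \mr{Desc}(A)$ as toposes (with geometric morphisms). I would then argue that this topos-theoretic descent equivalence is compatible with the passage to coherent objects / Heyting pretoposes: the $2$-limit of toposes along a cosimplicial diagram of $\Fin$-bounded toposes restricts to the corresponding $2$-limit of their pretoposes of coherent objects (this uses that everything in sight is $\Fin$-bounded, hence of the form $\Fin[\mc I]$, and that the Heyting structure is detected pointwise / preserved under the relevant operations). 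Concretely: descent data for $A$ assembling into an object of $\Fin[\mc P]$ is exactly the statement that $\Fin[\mc P] \to \mr{Desc}(A)$ is an equivalence; applying the representable $(2,1)$-functor $\HPt(\mc H, -)$ — which preserves $(2,1)$-limits — yields
\[ \HPt(\mc H, \Fin[\mc P]) \simeq \HPt(\mc H, \mr{Desc}(A)) \simeq \mr{Desc}(A; \Phi_{\mc H}), \]
which is precisely the stack condition for $\Phi_{\mc H}$.

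The main obstacle I expect is the bookkeeping needed to certify that open surjections in $\Casf\iso$ are effective descent morphisms of toposes and that descent along them is compatible with taking coherent objects — i.e. that the $2$-limit of a cosimplicial diagram of $\Fin$-bounded toposes, restricted to coherent objects, computes the $2$-limit of the corresponding Heyting pretoposes. The first half is standard topos theory (open surjections are of effective descent, Johnstone C5.1.6 / Moerdijk); the second half needs the explicit form $\Fin[\mc I]$ and the pointwise nature of finite limits/colimits and the Heyting operation in $\Fin[\mc I]$ (Lemma~\ref{lem:subobjfin}, Corollary~\ref{cor:finpretoposfincaucat}), so that "coherent object with descent datum'' glues to a genuine coherent object rather than merely a Grothendieck-topos-theoretic one. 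Once that compatibility is in hand, everything else is a formal consequence of representability of $\Phi_{\mc H}$ and the reduction to rooted objects.
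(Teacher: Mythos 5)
Your proposal is correct and follows essentially the same route as the paper: both invoke the Joyal--Tierney/Moerdijk descent theorem to see that an open surjection exhibits $\mc P$ as the $2$-colimit of its \v{C}ech nerve in $\Topos_{\mr f}$, transfer this across the duality with $\Pretopos_{\mr f}$, check that the resulting $2$-limit survives the passage to $\Pretopos$ and then to $\mb{HPretopos}$ (the paper handles your ``compatibility with coherent objects'' worry by noting that $\Pretopos_{\mr f}\hook\Pretopos$ preserves and creates finite $2$-limits and that $\mb{HPretopos}$ is monadic over $\Pretopos$), and conclude by applying the representable $\HPt(\mc H,-)$. The only cosmetic difference is that you make the reduction to rooted objects and single covers explicit, which the paper leaves to the preceding subsection.
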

\begin{proof}
  Recall the descent theorem for toposes first given in~\cite{joyal1984extension}, which states that open surjections are of effective descent for toposes. Since the descent theorem can be proven in purely elementary means, e.g. see~\cite{moerdijk1985descent}, in our finitary case this implies that the following diagramme will be a 2-colimit in $\Topos_{\mr f}$ under the equivalence with $\Catf$,
  \[
    \begin{tikzcd}
      \mc Q \times_{\mc P} \mc Q \times_{\mc P} \mc Q \ar[r, shift left = 2] \ar[r, shift right = 2] \ar[r] & \mc Q \times_{\mc P} \mc Q \ar[r, shift right] \ar[r, shift left] & \mc Q \ar[r, two heads, "A"] & \mc P
    \end{tikzcd}
  \]
  By duality, this becomes a 2-limit in $\Pretopos_{\mr f}$. In fact, the dual will be a 2-limit in $\Pretopos$, since the full inclusion $\Pretopos_{\mr f} \hook \Pretopos$ evidently preserves and creates finite 2-limits. As we've mentioned, 2-limits in $\mb{HPretopos}$ are computed the same as in $\Pretopos$, thus the dual diagramme is also a 2-limit for Heyting pretoposes.
\end{proof}

\begin{remark}
  In fact for our finitary case, it is fairely straight forward to directly prove that the above diagramme is a 2-colimit in $\Catf$ when $A$ is an open surjection, which avoids the more technical topos-theoretic proofs. We leave the details for the interested readers.
\end{remark}

This way, we have realised any Heyting pretopos $\mc H$ as a stack of groupoid of finite models over the $(2,1)$-category of finite Kripke frames. However, due to our restriction on finite models on finite Kripke frames, we cannot expect that this stack $\Phi_{\mc H}$ we associate to $\mc H$ will reveal all the information for an arbitrary Heyting pretopos. This way, we introduce the relevant class of \emph{finitely presentable} Heyting pretoposes:

\begin{definition}
  We say a Heyting pretopos $\mc H$ is \emph{finitely presentable}, if it is complete w.r.t. finite models on finite Kripke frames, i.e. the following canonical functor is \emph{faithful}
  \[ \mc H \inj \prod_{\mc P\in\Casfp,F\in\Phi_{\mc H}(\mc P)}\Fin[\mc P], \]
\end{definition}

Let us use $\HPfp$ to denote the $(2,1)$-category of finitely presentable Heyting pretoposes. From the above definition, $\HPfp$ basically contains those sub Heyting pretoposes of products of finite ones. We thus view our stack representation as a 2-functor as follows,
\[ \Phi_- : \HPfp\op \to \St(\Casfp\iso,J), \]
This way, we have indeed embedded our interested class of Heyting pretoposes into the 2-category of stacks on the 2-site $(\Casfp\iso,J)$, contravariantly. 

Evidently, finite Heyting pretoposes are finitely presentable. More generally, the \emph{free} Heyting pretopos generated by a finite pretopos will also be finitely presentable. For any finite Pretopos $\Fin[\mc I]$, the free Heyting pretopos generated by it will be denoted as $\Hey[\mc I]$. In particular, for any Heyting pretopos $\mc H$, we have a natural equivalence of categories
\[ \Pretopos(\Fin[\mc I],\mc H) \simeq \mb{HPretopos}(\Hey[\mc I],\mc H). \]
where the equivalence is induced by an inclusion $\Fin[\mc I] \inj \Hey[\mc I]$. For instance, the \emph{initial} Heyting pretopos can be identified as the free Heyting pretopos $\Hey \simeq \Hey[\mb 1]$, which is freely generated by $\Fin = \Fin[\mb 1]$.

It is instructive to look at the stack representation of these free Heyting pretoposes. Notice that for any $\mc P\in\Casfp\iso$ we have the following equivalences,
\begin{align*}
  \Phi_{\Hey[\mc I]}(\mc P)
  &\simeq \HPfp(\Hey[\mc I],\Fin[\mc P]) \\
  &\simeq \Ptfp(\Fin[\mc I],\Fin[\mc P]) \\
  &\simeq \Catf\iso(\mc P,\mc I).
\end{align*}
We also write the stack $\Catf\iso(-,\mc I)$ on $\Casfp\iso$ as $\uv{\mc I}$. The above equivalence shows that for any finite Cauchy complete category $\mc I$, we have
\[ \Phi_{\Hey[\mc I]} \simeq \uv{\mc I} \]
in the $(2,1)$-category $\St(\Casfp\iso,J)$, and thus by Lemma~\ref{lem:modelstack}, all prestacks of the form $\uv{\mc I}$ are in fact stacks for the topology $J$.

\begin{remark}
  In fact, one can show that $J$ is the \emph{canonical} topology on $\Casfp\iso$, i.e. the largest topology such that all representable prestacks are stacks. One can write down a proof of this fact very similar to the posetal case presented in~\cite{ghilardi2013sheaves}. But since this is irrelevant for our investigation, we omit it here.
\end{remark}

We also note that the 2-category of finite Heyting pretoposes is closed under slices, thus as a consequence it is closed under finite quotients of finitely presentable Heyting pretoposes:

\begin{lemma}
  If $\mc H$ is a finite Heyting pretopos, then so is the slice $\mc H/\varphi$ for any $\varphi\in\mc H$.
\end{lemma}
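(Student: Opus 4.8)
The plan is to reduce the statement to the characterization of finite Heyting pretoposes obtained in Theorem~\ref{thm:dualfinhp}, namely that $\mb{HPretopos}_{\mr f} \simeq \Casf\op$ via $\Fin[-]$, so that it suffices to show: if $\mc I$ is a finite Cauchy complete category, then for any object of the classifying data there is a finite Cauchy complete category $\mc J$ with $\Fin[\mc J] \simeq \Fin[\mc I]/\varphi$, such that the projection $\Fin[\mc I] \to \Fin[\mc I]/\varphi$ corresponds to an open functor $\mc J \to \mc I$. By Corollary~\ref{cor:finpretoposfincaucat} we may assume $\mc H = \Fin[\mc I]$ for a finite Cauchy complete $\mc I$. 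A subterminal object $\varphi \in \Fin[\mc I]$ is, by Lemma~\ref{lem:subobjfin} applied to $X = 1$, the same data as an upward closed subset $U_\varphi \subseteq \mc I$; but a general $\varphi$ is an object of $\Fin[\mc I]$, and the slice $\Fin[\mc I]/\varphi$ is equivalent to $\Fin[\elem_{\mc I}\varphi]$, using the standard equivalence $[\mc I,\Set]/\varphi \simeq [\elem_{\mc I}\varphi,\Set]$ restricted to coherent (finite) objects exactly as in the proof of Lemma~\ref{lem:subobjfin}.

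First I would record that $\elem_{\mc I}\varphi$ is again a finite category, since $\mc I$ is finite and $\varphi$ takes finite values. Next I would check that $\elem_{\mc I}\varphi$ is Cauchy complete: an idempotent in $\elem_{\mc I}\varphi$ lives over an idempotent $e : x \to x$ in $\mc I$ together with a compatible element of $\varphi(x)$ fixed by $\varphi(e)$; splitting $e$ in $\mc I$ as $x \rightleftarrows y$ and transporting the element along $\varphi(y) \cong \mathrm{eq}(\varphi(e))$ produces a splitting in $\elem_{\mc I}\varphi$. This gives $\mc H/\varphi \simeq \Fin[\elem_{\mc I}\varphi]$ with $\elem_{\mc I}\varphi \in \Catf$, hence $\mc H/\varphi$ is a finite pretopos, and by Corollary~\ref{cor:finpretoposfincaucat} it is automatically a finite Heyting pretopos. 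Finally, to stay inside $\mb{HPretopos}_{\mr f}$ as a \emph{category} (not merely to know the object is there) one should note that the canonical projection $\mc H \to \mc H/\varphi$ is a Heyting functor: it is the inverse image of the open geometric morphism $\Fin[\elem_{\mc I}\varphi] \to \Fin[\mc I]$ dual to the canonical projection functor $\pi : \elem_{\mc I}\varphi \to \mc I$, and $\pi$ is open because every object of $\elem_{\mc I}\varphi$ over $x$ has the property that any $f : x \to a$ in $\mc I$ lifts along $\varphi$ (using that $\varphi(f)$ sends the chosen element to \emph{some} element of $\varphi(a)$), so the upward slice condition of Lemma~\ref{lem:heresurjopen} is satisfied on the nose.

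Alternatively, and perhaps more cleanly, one can avoid computing with categories of elements by a purely 2-categorical argument: it is a general fact that for any Heyting pretopos $\mc H$ and $\varphi \in \mc H$, the slice $\mc H/\varphi$ is again a Heyting pretopos and the projection $\mc H \to \mc H/\varphi$ is a Heyting functor (finite limits, coproducts, quotients and the Heyting implication all descend to slices since $\mc H/\varphi$ is again a Heyting category and $\varphi^* = (-) \times \varphi$ is logical on subobjects). Then the only genuinely finitary input needed is that $\Mod(\mc H/\varphi)$ is essentially finite; and $\Mod(\mc H/\varphi)$ is, by the universal property of the slice, the category whose objects are pairs $(M, m)$ with $M \in \Mod(\mc H)$ and $m \in M(\varphi)$, i.e. it is $\elem_{\Mod(\mc H)}(\mathrm{ev}_\varphi)$, a category of elements of the finite functor $M \mapsto M(\varphi)$ on the finite category $\Mod(\mc H)$, hence finite.

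I expect the main obstacle to be bookkeeping rather than conceptual: one must be careful that the equivalence $\mc H/\varphi \simeq \Fin[\elem_{\mc I}\varphi]$ respects the finiteness/coherence conditions (the slice of a presheaf topos is a presheaf topos, but one must confirm the \emph{finite} objects correspond correctly), and that the functor realizing the projection as an open functor is the one claimed. The Cauchy-completeness check for $\elem_{\mc I}\varphi$ is the one place where a small argument is genuinely required, and it is exactly the absolute-colimit style argument already used in Lemma~\ref{lem:fullcloseretr} and Proposition~\ref{prop:upwardesssurj}, so it should go through smoothly.
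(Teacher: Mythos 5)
Your proof is correct, but it takes a genuinely different route from the paper's. You establish the literal content of the statement --- that $\Mod(\mc H/\varphi)$ is essentially finite --- either by identifying $\mc H/\varphi \simeq \Fin[\elem_{\mc I}\varphi]$ under the duality of Theorem~\ref{thm:finpredual} (your Cauchy-completeness check for $\elem_{\mc I}\varphi$ is the one genuinely needed step, and it is right), or by computing $\Mod(\mc H/\varphi) \simeq \elem_{\Mod(\mc H)}(\mathrm{ev}_\varphi)$ and using Lemma~\ref{lem:finpretoposfinmodel} to see that each fibre $M(\varphi)$ is a finite set. The paper instead starts from the same universal property of the slice that your second argument uses, $\HPt(\mc H/\varphi,\Fin[\mc P]) \cong \sum_{F\in\Phi_{\mc H}(\mc P)}(F\varphi)_{*}$, but then verifies a different-looking property: that every proper subterminal object of $\mc H/\varphi$ is falsified by some finite Kripke model, i.e.\ it checks the completeness condition defining finite presentability directly, which is what the surrounding text actually needs (closure of $\HPfp$ under finite quotients). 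Your first argument buys something the paper leaves implicit: an explicit description of the dual object $\elem_{\mc I}\varphi$ in $\Casf$, together with the observation that the projection to $\mc I$ is a discrete opfibration and hence open by Lemma~\ref{lem:heresurjopen}, which meshes nicely with the (initial, discrete opfibration) factorisation and with the petit representation of Section~\ref{subsec:grospetit}. Conversely, the paper's falsification argument is the one that survives when $\mc H$ is merely finitely presentable rather than finite, whereas your counting of models does not; and note that your second argument should cite Lemma~\ref{lem:finpretoposfinmodel} explicitly, since the finiteness of the fibres $M(\varphi)$ is exactly that lemma and not automatic from the finiteness of $\Mod(\mc H)$ alone.
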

\begin{proof}
  Recall that the slice $\mc H/\varphi$ classifies $\varphi$-points of models of $\mc H$,
  \[ \HP(\mc H/\varphi,\Fin[\mc P]) \cong \sum_{F\in\Phi_{\mc H}(\mc P)}\Gamma F\varphi \cong \sum_{F\in\Phi_{\mc H}(\mc P)}(F\varphi)_{*}. \]
  Here $\Gamma$ is the global section functor, and the sum notation $\sum_{F\in\Phi_{\mc H}(\mc P)}\Gamma F\varphi$ denote the category of pairs $(F,\alpha)$ with $F \in \Phi_{\mc H}(\mc P)$ a model of $\mc H$ in $\Fin[\mc P]$, while $\alpha : 1 \to F\varphi$ is a global point of $F\varphi$ in $\Fin[\mc P]$. This is well-known, and one explicit proof can be found e.g. in Chapter 2.3 of~\cite{breiner2013duality}.\footnote{The proof there is given for pretoposes, but the same proof works for Heyting pretoposes as well.} The final isomorphism is due to the fact that $\mc P$ has an initial object, thus taking the global section is equivalent of evaluating on the root.

  Now if we have a proper subterminal object $U$ in $\mc H/\varphi$, it corresponds to some proper subobject $U \inj \varphi$. By $\mc H$ being finite, we can find a finite model $F : \mc H \to \Fin[\mc P]$ such that $FU \subsetneq F\varphi$ is proper. Then we may choose some $a\in (F\varphi)_{*}$ such that $a\not\in (FU)_{*}$. The pair $(F,a)$ then gives us a model of $\mc H/\varphi$ which falsifies $U$.
\end{proof}

\begin{remark}
  It might be tempting to say that \emph{all} finitely presentable Heyting pretoposes are finite quotients of some free one of the form $\Hey[\mc I]$. However we do not know whether this is true or not at this point.
\end{remark}

\subsection{From Gros to Petit}\label{subsec:grospetit}

Following the \emph{``gros-petit''} philosophy in topos theory, given any finitely presented Heyting pretopos $\mc H$, the \emph{petit} category $\mc H$ can be embedded into the gros category $\mc H/\HPfp\op$ of the opposite of the 2-category of finite Heyting pretoposes under $\mc H$,
\[ \mc H \inj \mc H/\HPfp\op, \]
sending any $\varphi\in\mc H$ to the pullback onto the slice over it,
\[ \varphi^{*} : \mc H \to \mc H/\varphi. \]
By composing with the stack representation $\Phi_- : \HPfp\op \to \St(\Casfp\iso,J)$ functor, which we may as well denote it as the \emph{gros representation functor}, we naturally get a \emph{petit representation} of $\mc H$ as follows,
\[ \mc H \to \St(\Casfp\iso,J)/\Phi_{\mc H}. \]
On the left we have a \emph{1-category} $\mc H$, thus we expect that this representation to land in \emph{discrete} objects over $\Phi_{\mc H}$ internally in $\St(\Casfp\iso,J)$.

Following the general development of fibrations in a 2-category in~\cite{street1974fibration}, in a 2-category $\mc K$ a morphism $f : A \to B$ is called a \emph{discrete opfibration} if for any $C\in\mc K$, the induced functor $\mc K(C,A) \to \mc K(C,B)$ is a discrete opfibration. Notice that for a fibration, we would also require that for any $g : D \to C$ the base change square will be a morphism of fibrations. However, since for discrete opfibrations every morphism in the over category will be Cartesian, this condition is trivially satisfied. Equivalently, $f : A \to B$ is a discrete fibration iff it is a discrete object in the 2-category of opfibrations over $B$, and discrete opfibrations over some object naturally forms a \emph{1-category}.

In the $(2,1)$-category of stacks over $(\Casfp\iso,J)$, to check opdiscrete fibrations it suffices to check on representables. For any stack $\mc F$ in $\St(\Casfp\iso,J)$, we use $\opDisc(\mc F)$ to denote the 1-category of opdiscrete fibrations on $\mc F$. Furthermore, the category $\opDisc(\mc F)$ will in fact be a \emph{Heyting category}, e.g. see~\cite{nlab:heyting_2-category}.

We first show the petit representation functor indeed lands in discrete opfibrations in $\St(\Casfp\iso,J)$:

\begin{lemma}\label{lem:petitdisc}
  The petit representation restricts to a functor as follows:
  \[ \mc H \to \mb{opDisc}(\Phi_{\mc H}). \]
\end{lemma}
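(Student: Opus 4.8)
The plan is to check, for each $\varphi \in \mc H$, that the morphism of stacks
\[ p_\varphi : \Phi_{\mc H/\varphi} \to \Phi_{\mc H} \]
obtained by applying the gros representation $\Phi_-$ to the projection $\varphi^{*} : \mc H \to \mc H/\varphi$ is a discrete opfibration in $\St(\Casfp\iso, J)$. Granting this, the lemma follows formally: between discrete opfibrations over a fixed base every $1$-cell over the base is automatically a morphism of opfibrations, and any $2$-cell between two such $1$-cells lying over the identity must be trivial since the target is a discrete opfibration, so $\opDisc(\Phi_{\mc H})$ embeds $2$-fully-faithfully into $\St(\Casfp\iso, J)/\Phi_{\mc H}$ as a locally discrete sub-$2$-category; the petit representation, which is already a functor into that slice (being the composite of $\mc H \to \mc H/\HPfp\op$ with $\Phi_-$), therefore corestricts to a functor $\mc H \to \opDisc(\Phi_{\mc H})$.

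To verify that $p_\varphi$ is a discrete opfibration, I would use the reduction recalled above, that it suffices to test on representable stacks $\uv{\mc P}$ for $\mc P \in \Casfp\iso$. By the Yoneda equivalence $\St(\Casfp\iso, J)(\uv{\mc P}, -) \simeq (-)(\mc P)$, and since all objects in sight are groupoid-valued, this amounts to showing that for every rooted $\mc P$ the functor of groupoids $\Phi_{\mc H/\varphi}(\mc P) \to \Phi_{\mc H}(\mc P)$ is a discrete opfibration. Here I invoke the classification of slices of a (Heyting) pretopos by its pointed models~\cite{breiner2013duality}, which gives an equivalence natural in $\mc P$,
\[ \Phi_{\mc H/\varphi}(\mc P) = \HPt(\mc H/\varphi, \Fin[\mc P]) \simeq \sum_{F \in \Phi_{\mc H}(\mc P)} \Gamma F\varphi \simeq \sum_{F \in \Phi_{\mc H}(\mc P)} (F\varphi)_{*}, \]
the last step using that $\mc P$ is rooted. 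Under this equivalence $p_\varphi$ becomes the canonical projection out of the category of elements of the functor $\Phi_{\mc H}(\mc P) \to \Fin$, $F \mapsto (F\varphi)_{*}$, which is well-defined precisely because $\Phi_{\mc H}(\mc P)$ is a groupoid, so any natural isomorphism $F \cong G$ yields a bijection $(F\varphi)_{*} \cong (G\varphi)_{*}$. The projection out of the category of elements of a $\Set$-valued functor is always a discrete opfibration, finishing the case of each $\mc P$; naturality in $\mc P$ of the displayed equivalence guarantees that these objectwise discrete opfibrations are exactly the components of the single morphism $p_\varphi$.

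Finally, functoriality in $\varphi$ is inherited from the two functors $\mc H \to \mc H/\HPfp\op$ and $\Phi_-$ that $p_{(-)}$ is built from, so there is nothing to check on morphisms. The only step that is not a routine translation is the passage from ``discrete opfibration on all representables'' to ``discrete opfibration in the $(2,1)$-category of stacks'': this is the reduction cited above, and it ultimately rests on discrete opfibrations of groupoids being stable under the (weighted) limits through which $\St(\Casfp\iso, J)(\mc C, -)$ is assembled from the evaluations $(-)(\mc P)$ for a general stack $\mc C$. I do not anticipate any genuine obstacle beyond correctly invoking that fact together with the slice formula.
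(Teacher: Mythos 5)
Your proposal is correct and follows essentially the same route as the paper: identify $\Phi_{\mc H/\varphi}(\mc P)$ with $\sum_{F\in\Phi_{\mc H}(\mc P)}(F\varphi)_{*}$ via the slice/pointed-model formula, observe that the projection is the canonical discrete opfibration out of a category of elements (with isomorphisms $F\cong G$ acting by bijections on fibres), and conclude by testing on representables. You spell out the reduction to representables and the corestriction into the locally discrete sub-$2$-category $\opDisc(\Phi_{\mc H})$ in more detail than the paper does, but the substance is identical.
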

\begin{proof}
  Given any $\varphi\in\mc H$, by construction for any $\mc P\in\Casfp\iso$, the morphism
  \[ \Phi_{\mc H/\varphi}(\mc P) \cong \sum_{F\in\Phi_{\mc H}(\mc P)}\Gamma F\varphi \cong \sum_{F\in\Phi_{\mc H}(\mc P)}(F\varphi)_{*} \to \Phi_{\mc H}(\mc P) \]
  is given by the projection map. Evidently, for any $F\in\Phi_{\mc H}(\mc P)$, the fibre of $F$ over this projection is the \emph{discrete set} $(F\varphi)_{*}$. Furthermore, for any $\eta : F \cong G$, this naturally provides a map $(F\varphi)_{*} \to (G\varphi)_{*}$. This shows for any $\mc P$ the above map morphism is a discrete opfibration, thus $\Phi_{\mc H/\varphi} \to \Phi_{\mc H}$ is a discrete opfibration in $\St(\Casfp\iso,J)$.
\end{proof}

More importantly, when restricting the codomain the discrete opfibrations over $\Phi_{\mc H}$, the petit representation will be a \emph{faithful Heyting functor} for any finitely presented Heyting pretopos:

\begin{theorem}\label{thm:petitheyting}
  For any $\mc H$ in $\HPfp$, the petit representation
  \[ \mc H \inj \mb{opDisc}(\Phi_{\mc H}) \]
  is a faithful Heyting functor.
\end{theorem}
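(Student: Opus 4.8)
The plan is to pass to the standard concrete description of $\opDisc(\Phi_{\mc H})$. Write $\mc E_{\mc H}$ for the $(2,1)$-category of elements $\int_{\Casfp\iso}\Phi_{\mc H}$: its objects are pairs $(\mc P,F)$ with $\mc P\in\Casfp\iso$ and $F\in\Phi_{\mc H}(\mc P)$ a model, its morphisms $(\mc Q,G)\to(\mc P,F)$ are pairs of an open functor $A:\mc Q\to\mc P$ and an isomorphism $A^*F\cong G$, and equip it with the topology $\tilde J$ lifted from $J$. Since a discrete opfibration over the groupoid $\Phi_{\mc H}(\mc P)$ is the same as a functor $\Phi_{\mc H}(\mc P)\to\Set$, and descent for $\Phi_{\mc H}$ translates into descent over $\mc E_{\mc H}$, we get $\opDisc(\Phi_{\mc H})\simeq\sh(\mc E_{\mc H},\tilde J)$. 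Under this equivalence the petit representation becomes $\varphi\mapsto X_\varphi$, with $X_\varphi(\mc P,F)=\Gamma(F\varphi)=(F\varphi)_*$; this is a sheaf because it is the fibrewise form of the discrete opfibration $\Phi_{\mc H/\varphi}\to\Phi_{\mc H}$ of Lemma~\ref{lem:petitdisc}, which is a stack by Lemma~\ref{lem:modelstack} applied to the Heyting pretopos $\mc H/\varphi$. All the structure we must check is then the standard Heyting structure of the Grothendieck topos $\sh(\mc E_{\mc H},\tilde J)$.

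Faithfulness is where the hypothesis $\mc H\in\HPfp$ is used. If $f\neq g:\varphi\to\psi$ in $\mc H$, finite presentability produces some $\mc P\in\Casfp$ and a model $F$ with $Ff\neq Fg$ in $\Fin[\mc P]$, hence $(Ff)_p\neq(Fg)_p$ at some object $p\in\mc P$, witnessed by an element $x\in(F\varphi)(p)$. The comma projection $\iota_p:\mc P_{p/}\to\mc P$ is open and $\mc P_{p/}$ is rooted, so $(\mc P_{p/},\iota_p^*F)$ is an object of $\mc E_{\mc H}$; under the canonical isomorphism $\Gamma(\iota_p^*Y)\cong Y(p)$ the element $x$ becomes a global section of $\iota_p^*F\varphi$ on which $X_f$ and $X_g$ act, respectively, by $(Ff)_p$ and $(Fg)_p$. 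These differ at $x$, so $X_f\neq X_g$.

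Coherence of the petit representation is routine. Finite limits in $\sh(\mc E_{\mc H},\tilde J)$ are computed pointwise, and $X$ preserves them because each model $F$ and $\Gamma=(-)_*$ (evaluation at the root, since $\mc P$ is rooted) are left exact. Images and finite joins of subobjects are sheafifications of the pointwise constructions, and since each $F$ is a Heyting, hence coherent, functor while $\Gamma:\Fin[\mc P]\to\Fin$ is coherent (left exact with a right adjoint), the pointwise image of $X_f$, resp.\ pointwise union $X_\varphi\vee X_\psi$, already agrees with $X_{\im f}$, resp.\ $X_{\varphi\vee\psi}$, which are sheaves; no sheafification is needed, and strictness of the initial object is automatic.

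The heart of the proof — and the step I expect to be the main obstacle — is preservation of implication $\Rightarrow$ in each subobject lattice and of $\forall_f$ along every morphism, since these are \emph{not} visible pointwise: $\Gamma$ is in general not a Heyting functor. Here I would expand the Kripke--Joyal formulas in $\sh(\mc E_{\mc H},\tilde J)$; for $\varphi,\psi\le\theta$ in $\mc H$,
\[ (X_\varphi\Rightarrow X_\psi)(\mc P,F)=\{\, s\in(F\theta)_* \mid \text{for every }A:\mc Q\to\mc P\text{ over }(\mc P,F),\ \text{if }A^*s\text{ factors through }A^*F\varphi\text{ then through }A^*F\psi\,\}, \]
and analogously for $\forall$. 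The inclusion $X_{\varphi\Rightarrow\psi}\subseteq(X_\varphi\Rightarrow X_\psi)$, and likewise for $\forall$, follows from the fact that every transition functor $A^*$ in the site is Heyting — the morphisms of $\Casfp\iso$ are the open functors, cf.\ Lemma~\ref{lem:heresurjopen} and Section~\ref{subsec:openheyting} — together with the identity $a\wedge(a\Rightarrow b)\le b$, resp.\ the counit of $\forall_f\dashv f^*$, in $\Fin[\mc Q]$. For the reverse inclusion I would test against the comma projections $\iota_p:\mc P_{p/}\to\mc P$: a global section $s$ of $F\theta$ restricted along $\iota_p$ factors through $\iota_p^*F\varphi$ exactly when $p$ lies in the upward-closed set $s^*(F\varphi)\in\sub_{\Fin[\mc P]}(1)\cong D(\mc P)$ (Lemma~\ref{lem:subobjfin}), and evaluating the factorisation supplied by the hypothesis at the root of $\mc P_{p/}$ recovers membership at $p$; for $\forall$ one feeds into the formula the principal section generated by an element of $(F\theta_1)(p)$ and again uses that $F$ is Heyting. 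Collecting the two inclusions over all $(\mc P,F)$ gives $X_{\varphi\Rightarrow\psi}=X_\varphi\Rightarrow X_\psi$ and $X_{\forall_f\varphi}=\forall_{X_f}(X_\varphi)$, so the petit representation is a Heyting functor — and this last part in fact holds for an arbitrary Heyting pretopos, finite presentability being needed only for faithfulness.
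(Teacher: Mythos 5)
Your proposal is correct and, at its core, runs along the same lines as the paper's proof: the coherent fragment (finite limits, images, finite joins) is checked pointwise using that each model $F$ and evaluation at the root are coherent, and the genuinely non-pointwise part ($\forall$ and $\Rightarrow$) is handled by unwinding the Kripke--Joyal forcing clause over all open $A : \mc Q \to \mc P$ and then testing against the slice projections $\mc P_{p/} \to \mc P$ --- exactly the mechanism by which the paper identifies $\forall_{\Phi_{\mc H/\alpha}}\Phi_{\mc H/\varphi}$ with $\Phi_{\mc H/\forall_{\alpha}\varphi}$, via the observation that quantifying over all $A : \mc Q \to \mc P$ and global sections upstairs amounts to quantifying over all $p \in \mc P$ and elements of the fibre at $p$. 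You diverge in two genuine respects. First, you transport everything to the petit site of elements and work in $\sh(\mc E_{\mc H},\tilde J)$ rather than directly with discrete opfibrations in $\St(\Casfp\iso,J)$; this is a legitimate and arguably cleaner setting for the forcing computations, but the equivalence $\opDisc(\Phi_{\mc H})\simeq\sh(\mc E_{\mc H},\tilde J)$ is asserted rather than argued and deserves at least a reference. Second, your faithfulness argument is more elementary and self-contained: you separate two parallel arrows directly by a model together with a slice projection, whereas the paper first establishes the Heyting property and then reduces faithfulness to showing that proper subterminal objects are not sent to the identity. Your route has the advantage that faithfulness does not depend logically on the Heyting part, and it makes explicit the (correct) observation that finite presentability enters only there. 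Your justification that no sheafification is needed for images and joins --- the pointwise construction already coincides with $X_{\im f}$ resp.\ $X_{\varphi\vee\psi}$, which are sheaves --- is also sound, and slightly more direct than the paper's appeal to regularity of $J$.
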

\begin{proof}
  Faithfulness is basically built in in our definition of finitely presented Heyting pretoposes. Once we know it is a Heyting functor, it suffices to show for any proper subterminal object $\varphi$, it is not mapped to the identity on $\Phi_{\mc H}$. However, by $\mc H$ being finitely presented, there exists some $\mc P\in\Casfp\iso$ and a Heyting functor $F : \mc H \to \Fin[\mc P]$ that $F\varphi \inj 1$ is a proper subterminal object in $\Fin[\mc P]$. This implies $(F\varphi)_{*}$ must be empty, thus $F\not\in\Phi_{\mc H/\varphi}(\mc P)$. Hence, the induced injection $\Phi_{\mc H/\varphi} \inj \Phi_{\mc H}$ must also be proper. Hence, the remaining is to show it is a Heyting functor.
  
  We first show it preserves finite limits. Preserving the terminal objects is evident, since we have
  \[ \mc H/1 \simeq \mc H \mapsto \Phi_{\mc H} = \Phi_{\mc H}. \]
  For any $\alpha : \psi \to \varphi$ and $\beta : \chi \to \varphi$ in $\mc H$, we have the following equivalences,
  \begin{align*}
    &\, (\Phi_{\mc H/\psi} \times_{\Phi_{\mc H/\varphi}} \Phi_{\mc H/\chi})(\mc P) \\
    \cong &\, \scomp{(F,a) \in \sum_{F\in\Phi_{\mc H}(\mc P)}(F\psi)_{*},(G,b) \in \sum_{G\in\Phi_{\mc H}(\mc P)}(G\chi)_{*}}{F = G \conjt F\alpha a = G\beta b} \\
    \cong & \sum_{H\in\Phi_{\mc H}(\mc P)}(H\psi)_{*} \times_{(H\varphi)_{*}} (H\chi)_{*} \\
    \cong & \sum_{H\in\Phi_{\mc H}(\mc P)}H(\psi \times_{\varphi}\chi)_{*} \\
    \cong &\, \Phi_{\mc H/\psi \times_{\varphi} \chi}(\mc P)
  \end{align*}
  The first isomorphism is due to the fact that pullbacks in $\opDisc(\Phi_{\mc H})$ are computed point-wise; the second and the last isomorphisms have used the computation of models of slice Heyting pretoposes mentioned in the proof of Lemma~\ref{lem:petitdisc}; the third holds due to the fact that both $H$ and evaluation on the root preserve pullbacks.

  Next, we show that for any $X\in\mc H$, the induced map on subobjects
  \[ \sub_{\mc H}(X) \to \sub_{\mb{opDisc}(\Phi_{\mc H})}(\Phi_{\mc H/X}) \]
  is a distributive lattice morphism. Now subobjects of $\Phi_{\mc H/X}$ in $\opDisc(\Phi_{\mc H})$ coincides with subobjects of $\Phi_{\mc H/X}$ which are discrete opfibrations, e.g. see~\cite{nlab:heyting_2-category}, thus we have the following isomorphism
  \[ \sub_{\mb{opDisc}(\Phi_{\mc H})}(\Phi_{\mc H/X}) \cong \sub_{\mb{opDisc}(\Phi_{\mc H/X})}(\Phi_{\mc H/X}). \]
  This means it suffices to consider when $X$ is the terminal object. Given subterminal objects $\varphi,\psi\inj 1$ in $\mc H$, by definition for any $\mc P\in\Casfp\iso$,
  \begin{align*}
    \Phi_{\mc H/\varphi\wedge\psi}(\mc P)
    &\cong \scomp{F\in\Phi_{\mc H}(\mc P)}{F(\varphi \wedge \psi)_{*} \cong 1} \\
    &\cong \scomp{F\in\Phi_{\mc H}(\mc P)}{(F\varphi)_{*}} \cap \scomp{F\in\Phi_{\mc H}(\mc P)}{(F \models \psi)_{*}} \\
    &\cong \Phi_{\mc H/\varphi} \cap \Phi_{\mc H/\psi}
  \end{align*}
  Again, the second isomorphism holds since any such $F$ preserves finite limits. This shows the petit representation preserves meets in subobject lattices. Completely similarly it also preserves disjunctions. However, this further relies on the fact that $J$ is a \emph{regular} topology on $\Casfp\iso$, thus disjunction in subobject lattices of $\opDisc(\Phi_{\mc H})$ are also computed point-wise.

  To show it is a map between Heyting categories, we also need to show that the functor commutes with exisential and universal quantifiers. Given any $\alpha : Y \to X$ and $\varphi \inj Y$, for the case of existential quantification,
  \begin{align*}
    &\,\Phi_{\mc H/\exists_{\alpha}\varphi}(\mc P) \\
    \cong &\,\scomp{(F,a) \in \sum_{F \in \Phi_{\mc H}(\mc P)}(FX)_{*}}{a \in (F\exists_{\alpha}\varphi)_{*}} \\
    \cong &\,\scomp{(F,a) \in \sum_{F \in \Phi_{\mc H}(\mc P)}(FX)_{*}}{a \in \exists_{(F\alpha)_{*}}(F\varphi)_{*}} \\
    \cong &\,\scomp{(F,a) \in \sum_{F \in \Phi_{\mc H}(\mc P)}(FX)_{*}}{\exists b\in (F\varphi)_{*}.\alpha b = a} \\
    \cong &\,\exists_{\Phi_{\mc H/\alpha}}\Phi_{\mc H/\varphi}(\mc P)
  \end{align*}
  The second isomorphism holds since $F$ preserves existential quantifier, and that again due to the topology $J$ being regular, existential quantification is also computed point-wise thus preserved by $\ev_{*}$; the last isomorphism holds for the same reason.

  Finally, for universal quantification, we have
  \begin{align*}
    &\Phi_{\mc H/\forall_{\alpha}\varphi}(\mc P) \cong \sum_{F\in\Phi_{\mc H}(\mc P)}(\forall_{F\alpha}F\varphi)_{*} \\
    \cong &\sum_{F\in\Phi_{\mc H}(\mc P)}\scomp{x\in(FX)_{*}}{\forall p\in\mc P.\forall y\in(FY)_p.\alpha y = px \nt y\in(F\varphi)_p}
  \end{align*}
  The last isomorphism is basically by Kripke-Joyal semantics, for a reference see Chapter VI of~\cite{maclane2012sheaves}. On the other hand, if we compute the universal quantification in $\mb{opDisc}(\Phi_{\mc H})$, again by Kripke-Joyal semantics we have
  \begin{alignat*}{2}
    &\,\forall_{\Phi_{\mc H/\alpha}}\Phi_{\mc H/\varphi}(\mc P) \\
    \cong &\,\{\,(F,x)\in\Phi_{\mc H/X}(\mc P)\,\mid\,&&\ub{A}{\mc Q \to \mc P}\forall(G,y) \in \Phi_{\mc H/Y}(\mc Q)\\
    & \, && G = A^{*}F \conjt \alpha y = A^{*}x \nt (G,y) \in \Phi_{\mc H/\varphi}(\mc P)\} \\
    \cong &\,\{\,(F,x)\in\Phi_{\mc H/X}(\mc P)\,\mid\,&&\ub{A}{\mc Q \to \mc P}\forall y\in (A^{*}FY)_{*}\\
    & \, && \alpha y = A^{*}x \nt y \in (A^{*}F\varphi)_{*}\}
  \end{alignat*}
  However, for any such $A : \mc Q \to \mc P$, we know that
  \[ y \in (A^{*}F\varphi)_{*} \eff y \in (F\varphi)_p. \]
  where $p = A*$. Hence, the above two subobjects are isomorphic, which shows that the petit representation also commutes with the universal quantification. This completes the proof that the petit representation is a faithful Heyting functor.
\end{proof}

Thus, according to Theorem~\ref{thm:petitheyting}, any finitely presented Heyting pretopos $\mc H$ now can be viewed as a sub Heyting category of the discrete opfibred objects over $\Phi_{\mc H}$ in $\St(\Casfp\iso,J)$. As we've mentioned, when $\mc H$ is free of the form $\Hey[\mc I]$ for some $\mc I\in\Catf\iso$, $\Phi_{\Hey[\mc I]}$ is isomorphic to $\uv{\mc I}$. Thus, for this special case, the petit representation provides a faithful Heyting functor
\[ \Hey[\mc I] \inj \mb{opDisc}(\uv{\mc I}). \]

At last, we look at the base change of petit representations, which explains its relationship with the gros representation in more concrete terms:

\begin{lemma}\label{lem:basechangepetit}
  For any Heyting functor $\alpha : \mc H \to \mc G$ in $\HPfp$, the following diagramme commutes,
  \[
    \begin{tikzcd}
      \mc H \ar[d, "\alpha"'] \ar[r, tail] & \opDisc(\Phi_{\mc H}) \ar[d, "\Phi_{\alpha}^{*}"] \\
      \mc G \ar[r, tail] & \opDisc(\Phi_{\mc G})
    \end{tikzcd}
  \]
  where $\Phi_{\alpha} : \Phi_{\mc G} \to \Phi_{\mc H}$ is the morphism of stacks induced by the gros representation, and $\Phi_{\alpha}^{*}$ is the functor of pullback along $\Phi_{\alpha}$, which preserves discrete opfibrations.
\end{lemma}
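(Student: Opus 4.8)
The plan is to verify the square commutes objectwise on $\mc H$, using the explicit pointwise description of the petit representation recorded in the proof of Lemma~\ref{lem:petitdisc}. Fix $\varphi\in\mc H$. The right-then-down composite sends $\varphi$ to the pullback $\Phi_{\alpha}^{*}\Phi_{\mc H/\varphi}$, a discrete opfibration over $\Phi_{\mc G}$, while the down-then-right composite sends it to $\Phi_{\mc G/\alpha\varphi}\to\Phi_{\mc G}$. So it suffices to produce an isomorphism $\Phi_{\alpha}^{*}\Phi_{\mc H/\varphi}\cong\Phi_{\mc G/\alpha\varphi}$ of discrete opfibrations over $\Phi_{\mc G}$, natural in $\varphi$. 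I would also note, as the statement asserts, that $\Phi_{\alpha}^{*}$ preserves discrete opfibrations, which is the standard fact that discrete opfibrations in a $2$-category are stable under $2$-pullback, so that both sides genuinely live in $\opDisc(\Phi_{\mc G})$.

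For the isomorphism itself, recall that $2$-pullbacks in $\St(\Casfp\iso,J)$ are computed objectwise, so for each $\mc P\in\Casfp\iso$,
\[ (\Phi_{\alpha}^{*}\Phi_{\mc H/\varphi})(\mc P)\;\cong\;\Phi_{\mc G}(\mc P)\times_{\Phi_{\mc H}(\mc P)}\Phi_{\mc H/\varphi}(\mc P), \]
where the leg $\Phi_{\mc G}(\mc P)\to\Phi_{\mc H}(\mc P)$ is precomposition with $\alpha$, i.e. $G\mapsto G\alpha$, and the leg $\Phi_{\mc H/\varphi}(\mc P)\to\Phi_{\mc H}(\mc P)$ is the projection out of $\sum_{F\in\Phi_{\mc H}(\mc P)}(F\varphi)_{*}$. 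Unwinding the $2$-pullback, a point is a quadruple consisting of $G\in\Phi_{\mc G}(\mc P)$, $F\in\Phi_{\mc H}(\mc P)$, an isomorphism $G\alpha\cong F$, and an element of $(F\varphi)_{*}$; transporting along that isomorphism this is exactly a $G\in\Phi_{\mc G}(\mc P)$ together with an element of $(G\alpha\varphi)_{*}=(G(\alpha\varphi))_{*}$, which is precisely $\Phi_{\mc G/\alpha\varphi}(\mc P)$ by the computation of models of a slice Heyting pretopos used in the proof of Lemma~\ref{lem:petitdisc}. These identifications are manifestly natural in $\mc P$, compatible with the projections down to $\Phi_{\mc G}$, and natural in $\varphi$ because every map in sight is a projection.

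A second, more conceptual route avoids pointwise computation altogether. The square
\[
  \begin{tikzcd}
    \mc H \ar[r] \ar[d, "\alpha"'] & \mc H/\varphi \ar[d] \\
    \mc G \ar[r] & \mc G/\alpha\varphi
  \end{tikzcd}
\]
is a $2$-pushout in $\HPt$, since $\mc H/\varphi$ is the universal Heyting pretopos under $\mc H$ equipped with a global section of $\varphi$, and pushing this out along $\alpha$ freely adjoins a global section of $\alpha\varphi$. Since $\Phi_{-}=\HPt(-,\Fin[-])$ carries $2$-pushouts in $\HPt$ to $2$-pullbacks in $\St(\Casfp\iso,J)$ — because $\HPt(-,\Fin[\mc P])$ does so for each $\mc P$ by the universal property of $2$-pushouts and limits of stacks are computed objectwise — we obtain $\Phi_{\mc G/\alpha\varphi}\simeq\Phi_{\mc G}\times_{\Phi_{\mc H}}\Phi_{\mc H/\varphi}=\Phi_{\alpha}^{*}\Phi_{\mc H/\varphi}$, again giving the square.

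I expect the only real care needed is bookkeeping of the groupoid structure and of naturality: one must check that the objectwise correspondence above is an equivalence of groupoids respecting the isomorphisms in $\Phi_{\mc H}(\mc P)$ and $\Phi_{\mc G}(\mc P)$, that it assembles into a morphism of stacks over $\Phi_{\mc G}$ rather than a mere objectwise coincidence, and that it is functorial in $\varphi$, so that what results is a genuine commuting square of functors $\mc H\to\opDisc(\Phi_{\mc G})$. None of this is difficult, but it is the part that must be spelled out carefully.
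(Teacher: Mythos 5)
Your first route is essentially the paper's own proof: reduce the claim to showing that the square with vertices $\Phi_{\mc G/\alpha\varphi}$, $\Phi_{\mc H/\varphi}$, $\Phi_{\mc G}$, $\Phi_{\mc H}$ is a $2$-pullback of stacks, compute it objectwise on each $\mc P\in\Casfp\iso$ using the identification $\Phi_{\mc H/\varphi}(\mc P)\cong\sum_{F\in\Phi_{\mc H}(\mc P)}(F\varphi)_{*}$, and match the result with $\Phi_{\mc G/\alpha\varphi}(\mc P)$. If anything you are slightly more careful than the paper, which writes the pullback condition as the equality $G\alpha=F$ where the $2$-pullback really produces a pair together with an isomorphism $G\alpha\cong F$; your transport-along-the-isomorphism step is the honest version of that computation. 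Your second route, via the observation that $\mc G/\alpha\varphi$ is the $2$-pushout of $\mc H\to\mc H/\varphi$ along $\alpha$ and that the representable-type functor $\Phi_{-}=\HPt(-,\Fin[-])$ turns $2$-pushouts into $2$-pullbacks, does not appear in the paper; it is a clean alternative whose only extra burden is justifying the pushout claim, which follows from the same classification of Heyting functors out of slices (cited to Breiner in the proof of Lemma~\ref{lem:petitdisc}) that underlies the pointwise computation anyway. Either route suffices.
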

\begin{proof}
  To show this, it suffices to prove that for any $\varphi\in\mc H$, the following diagramme is a pullback in $\St(\Casfp\iso,J)$,
  \[
    \xymatrix{
      \Phi_{\mc G/\alpha\varphi} \ar[r] \ar[d] & \Phi_{\mc H/\varphi} \ar[d] \\
      \Phi_{\mc G} \ar[r]_{\Phi_{\alpha}} & \Phi_{\mc H}
    }
  \]
  We may dicretely compute the pullback on $\mc P\in\Casfp\iso$ as follows,
  \begin{align*}
    &\,(\Phi_{\mc G} \times_{\Phi_{\mc H}} \Phi_{\mc H/\varphi})(\mc P) \\
    \cong &\, \scomp{G\in\Phi_{\mc G}(\mc P),(F,a) \in \Phi_{\mc H/\varphi}(\mc P)}{G\alpha = F} \\
    \cong & \set{G\in\Phi_{\mc G}(\mc P),a\in(G\alpha\varphi)_{*}} \\
    \cong &\, \Phi_{\mc G/\alpha\varphi}(\mc P)
  \end{align*}
  The first holds since pullback between discrete opfibrations are computed point-wise in $\St(\Casfp\iso,J)$, and the remaining is standard calculation. This completes the proof.
\end{proof}

Now for those Heyting functors induced by pullback along some $\varphi\in\mc H$, the induced morphism between stacks by Lemma~\Ref{lem:petitdisc} will be a discrete opfibration
\[ \Phi_{\varphi} : \Phi_{\mc H/\varphi} \to \Phi_{\mc H}. \]
This means the pullback functor
\[ \Phi_{\varphi}^{*} : \opDisc(\Phi_{\mc H}) \to \opDisc(\Phi_{\mc H/\varphi}) \]
also has a \emph{left adjoint} $\Sigma_{\Phi_{\varphi}}$, which is simply given by composition with $\Phi_{\varphi}$ because discrete opfibrations are closed under composition. In such a case, the petit representation commutes with both adjoints:

\begin{corollary}\label{cor:etalebasechangepetit}
  For any $\varphi\in\mc H$, the following diagramme commutes,
  \[
    \begin{tikzcd}
      {\mc H/\varphi} & {\opDisc(\Phi_{\mc H/\varphi})} \\
      {\mc H} & {\opDisc(\Phi_{\mc H})}
      \arrow[tail, from=1-1, to=1-2]
      \arrow[tail, from=2-1, to=2-2]
      \arrow["\varphi^{*}"', curve={height=6pt}, from=2-1, to=1-1]
      \arrow["\dashv", "\Sigma_{\varphi}"', curve={height=6pt}, from=1-1, to=2-1]
      \arrow["\dashv", "\Sigma_{\Phi_{\varphi}}"', curve={height=6pt}, from=1-2, to=2-2]
      \arrow["\Phi_{\varphi}^{*}"', curve={height=6pt}, from=2-2, to=1-2]
    \end{tikzcd}
  \]
\end{corollary}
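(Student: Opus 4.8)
The plan is to read the statement as the assertion that the two petit representations $\iota_{\mc H}\colon\mc H\inj\opDisc(\Phi_{\mc H})$ and $\iota_{\mc H/\varphi}\colon\mc H/\varphi\inj\opDisc(\Phi_{\mc H/\varphi})$ (faithful Heyting functors by Theorem~\ref{thm:petitheyting}) together form a \emph{morphism of adjunctions} from $\Sigma_\varphi\dashv\varphi^{*}$ to $\Sigma_{\Phi_\varphi}\dashv\Phi_\varphi^{*}$. Before anything else I would record that $\mc H/\varphi$ is again finitely presentable — slicing preserves the defining faithful Heyting embedding into a product of finite Heyting pretoposes, since a slice of such a product is again a product of pretoposes of the form $\Fin[\mc Q]$ — so that all four corners of the diagram, and the hypothesis of Lemma~\ref{lem:basechangepetit}, are available. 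There are then two squares to verify: the one built from $\varphi^{*}$ and $\Phi_\varphi^{*}$, and the one built from the left adjoints.

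The first square requires no work: it is precisely the instance of Lemma~\ref{lem:basechangepetit} at the Heyting functor $\alpha=\varphi^{*}\colon\mc H\to\mc H/\varphi$, since by definition $\Phi_{\varphi^{*}}=\Phi_\varphi$, so that $\iota_{\mc H/\varphi}\circ\varphi^{*}\cong\Phi_\varphi^{*}\circ\iota_{\mc H}$. For the second square I would pass to \emph{mates}. Here $\Sigma_\varphi\dashv\varphi^{*}$ is the standard slice adjunction, while $\Sigma_{\Phi_\varphi}\dashv\Phi_\varphi^{*}$ exists because $\Phi_\varphi$ is a discrete opfibration (Lemma~\ref{lem:petitdisc}), with $\Sigma_{\Phi_\varphi}$ given by postcomposition with $\Phi_\varphi$. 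The commuting isomorphism of the first square, together with the units and counits of these two adjunctions, produces a canonical natural transformation
\[ \iota_{\mc H}\circ\Sigma_\varphi\;\Longrightarrow\;\Sigma_{\Phi_\varphi}\circ\iota_{\mc H/\varphi} \]
of functors $\mc H/\varphi\to\opDisc(\Phi_{\mc H})$, and since $\opDisc(\Phi_{\mc H})$ is a $1$-category it will suffice to check that this mate is an isomorphism objectwise.

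For the objectwise check I would fix an object $u=(\psi\xrightarrow{a}\varphi)$ of $\mc H/\varphi$, so $\Sigma_\varphi u=\psi$, and use the canonical identification of double slices $(\mc H/\varphi)/u\simeq\mc H/\psi$. Since the terminal map $\psi\to 1$ factors through $a$, one has $u^{*}\circ\varphi^{*}=\psi^{*}$ as Heyting functors $\mc H\to\mc H/\psi$, hence $\Phi_\varphi\circ\Phi_{u^{*}}=\Phi_{\psi^{*}}$ by the contravariant pseudofunctoriality of $\Phi_{-}$. Feeding this into the description of the petit representation from Lemma~\ref{lem:petitdisc} identifies both $\Sigma_{\Phi_\varphi}(\iota_{\mc H/\varphi}u)$ and $\iota_{\mc H}(\Sigma_\varphi u)$ with the discrete opfibration $\Phi_{\mc H/\psi}\to\Phi_{\mc H}$ whose fibre over a model $F\in\Phi_{\mc H}(\mc P)$ is the discrete set $(F\psi)_{*}$; a pointwise computation of the kind already used in the proofs of Lemma~\ref{lem:petitdisc} and Theorem~\ref{thm:petitheyting} — summing over $b\in(F\varphi)_{*}$ and noting that each element of $(F\psi)_{*}$ determines a unique such $b$ — should show that the mate realises exactly this identification, after which naturality in $u$ is formal.

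The part I expect to be most delicate is purely the coherence bookkeeping, not any substantive point: $\Phi_{-}$ is a contravariant $2$-functor valued in a $(2,1)$-topos, so every ``equality'' above is really a coherent natural isomorphism, the double-slice comparisons $(\mc H/\varphi)/u\simeq\mc H/\psi$ carry pseudofunctoriality constraints of their own, and the real work is to check that the mate, the isomorphism of Lemma~\ref{lem:basechangepetit}, and these constraints assemble into a single honest natural isomorphism rather than a mere pointwise family. Beyond Lemmas~\ref{lem:petitdisc} and~\ref{lem:basechangepetit} together with routine Kripke--Joyal-style pointwise computation, I do not expect any new ingredient to be needed.
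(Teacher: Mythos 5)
Your proposal is correct and follows essentially the same route as the paper: the right-adjoint square is exactly the instance of Lemma~\ref{lem:basechangepetit} at $\alpha=\varphi^{*}$, and the left-adjoint square reduces, via $\Sigma_{\Phi_\varphi}$ being postcomposition with $\Phi_\varphi$ and the identification $(\mc H/\varphi)/u\simeq\mc H/\psi$, to the (contravariant) pseudofunctoriality of the gros representation $\Phi_{-}$ — which is precisely what the paper's two-line proof invokes. Your mate/objectwise elaboration is a careful expansion of that same argument rather than a different one.
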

\begin{proof}
  From Lemma~\Ref{lem:basechangepetit} we already know it commutes with pullbacks. It commutes with left adjoints by functoriality of gros representation.
\end{proof}

\bibliographystyle{apalike} 
\bibliography{mybib}

\end{document}